\theoremstyle{plain}
\newtheorem{theorem}{Theorem}[section]
\newtheorem{lemma}[theorem]{Lemma}
\newtheorem{proposition}[theorem]{Proposition}
\newtheorem{corollary}[theorem]{Corollary}
\numberwithin{equation}{section}
\theoremstyle{definition}
\newtheorem{definition}[theorem]{Definition}
\newtheorem{example}[theorem]{Example}
\newtheorem{remark}[theorem]{Remark}
\DeclareMathOperator{\Mod}{-Mod}
\DeclareMathOperator{\lsets}{-Set}
\DeclareMathOperator{\module}{-mod}
\DeclareMathOperator{\fdmod}{-fdmod}
\DeclareMathOperator{\tmod}{-tmod}
\DeclareMathOperator{\dMod}{-dMod}
\DeclareMathOperator{\Ob}{Ob}
\DeclareMathOperator{\cod}{cod}
\DeclareMathOperator{\coind}{coind}
\DeclareMathOperator{\ind}{ind}
\DeclareMathOperator{\res}{res}
\DeclareMathOperator{\Ext}{Ext}
\DeclareMathOperator{\Hom}{Hom}
\newcommand{\BC}{\boldsymbol{\C}}
\newcommand{\C}{{\mathscr{C}}}
\newcommand{\Co}{{\mathscr{C}^{\mathrm{op}}}}
\newcommand{\D}{{\mathscr{D}}}
\newcommand{\N}{{\mathbb{N}}}
\newcommand{\Sets}{{\mathrm{Set}}}
\newcommand{\op}{{\mathrm{op}}}
\newcommand{\sat}{{J\text{-}\mathrm{sat}}}
\newcommand{\Sh}{{\mathrm{Sh}}}
\newcommand{\sh}{{\mathrm{sh}}}
\newcommand{\PSh}{{\mathrm{PSh}}}
\newcommand{\FI}{{\mathrm{FI}}}
\newcommand{\VI}{{\mathrm{VI}}}
\newcommand{\OI}{{\mathrm{OI}}}
\newcommand{\Orb}{{\mathrm{Orb}}}
\title[A torsion theoretic interpretation for sheaves of modules]{A torsion theoretic interpretation for sheaves of modules and Grothendieck topologies on directed categories}
\author{Zhenxing Di}
\address{School of Mathematical Sciences, Huaqiao University, Quanzhou 362021, China}
\email{dizhenxing@163.com}
\author{Liping Li}
\address{LCSM(Ministry of Education), School of Mathematics and Statistics, Hunan Normal University, Changsha 410081, China}
\email{lipingli@hunnu.edu.cn}
\author{Li Liang}
\address{Department of Mathematics, Lanzhou Jiaotong University, Lanzhou 730070, China}
\email{lliangnju@gmail.com}
\thanks{Z.X. Di was partly supported by NSF of China (Grant No. 12471034),
Scientific Research Fund of Fujian Province (Grant No. 605-52525002) and
Scientific Research Fund of Huaqiao University (Grant No. 605-50Y22050); L.P. Li was partly supported by NSF of China (Grant No. 12171146); L. Liang was partly supported by NSF of China (Grant No. 12271230).}
\keywords{Grothendieck topologies, rigid topologies, sheaves, torsion pairs, sheafification, sheaf cohomology, EI categories}
\begin{document}

\begin{abstract}
We prove that every Grothendieck topology induces a hereditary torsion pair in the category of presheaves of modules on a ringed site, and obtain a homological characterization of sheaves of modules: a presheaf of modules is a sheaf of modules if and only if it is saturated with respect to torsion presheaves, or equivalently, it is right perpendicular to torsion presheaves in the sense of Geigle and Lenzing. We also study Grothendieck topologies on directed categories $\C$ satisfying a certain locally finite condition, and show that every Grothendieck topology on $\C$ is a subcategory topology if and only if $\C$ is an artinian EI category. Consequently, in this case every sheaf category is equivalent to the presheaf category over a full subcategory of $\C$. Finally, we classify all Grothendieck topologies on a special type of noetherian EI categories, and extend many fundamental representation theoretic properties of $\FI$ and $\VI$ to their infinite full subcategories.
\end{abstract}

\maketitle

\section{Introduction}

Sheaves of modules on ringed sites have been widely applied in algebraic geometry, algebraic topology, and geometric representation theory. Recently, they were also used to investigate modular representation theory of finite groups (see for instances \cite{Bal, WX, XX, XZ}) and continuous representations of topological groups \cite{DLLX}. This new approach combines ideas and methods from topos theory, representation theory of categories, and representation theory of groups, and hence not only proposes a comprehensive technique to study representations of groups, but also reveals deep relations among distinct areas. A classical example is Artin's theorem \cite[III. 9, Theorems 1 and 2]{MM}, from which it follows that the category of continuous representations of a topological group $G$ is equivalent to the category of sheaves of modules over a ringed site whose underlying category is an orbit category of $G$ equipped with the atomic Grothendieck topology.

Sheaf theory is expected to play a more significant role in algebraic representation theory. To facilitate its applications in this area, it is useful to reinterpret concepts and notions in topos theory, which are formal and sometimes hard to check in practice, in terms of representation theory. In \cite{DLLX} we give an attempt for the special case of the atomic Grothendieck topology via a particular torsion theory. The first goal of this paper is to extend it to the full generality of all Grothendieck topologies.

Let us give some details. Let $\C$ be a small category, $J$ a Grothendieck topology on the opposite category $\Co$,\footnote{In this paper we consider representations of $\C$; that is, covariant functors from $\C$ to a concrete category. Since they are precisely presheaves over $\Co$, we impose Grothendieck topologies on $\Co$ rather than on $\C$.} and $\mathcal{O}$ a structure sheaf over the site $\BC^{\op} = (\Co, J)$. Given a presheaf $V$ of modules over the ringed site $(\BC^{\op}, \mathcal{O})$  (also called an \textit{$\mathcal{O}$-module}) as well as an object $x$ in $\C$, we define an element $v \in V_x$, the value of $V$ on $x$, to be \textit{$J$-torsion} if there is a covering sieve $S \in J(x)$ such that every morphism in $S$ sends $v$ to 0. The axioms of Grothendieck topologies guarantee that all $J$-torsion elements in $V$ form an $\mathcal{O}$-submodule; see Lemma \ref{torsion submodules}. Consequently, the operation to take the maximal $J$-torsion submodule gives rise to a left exact endo-functor $\mathcal{T}_J$ on $\mathcal{O} \Mod$, the category of $\mathcal{O}$-modules. We call this functor the \textit{torsion functor} and say that $V$ is \textit{$J$-torsion} (resp., \textit{$J$-torsion free}) if $\mathcal{T}_J(V) = V$ (resp., $\mathcal{T}_J(V) = 0$). If $V$ is torsion free and $\mathrm{R}^1 \mathcal{T}_J(V) = 0$, where $\mathrm{R}^1 \mathcal{T}_J$ is the first right derived functor of $\mathcal{T}_J$, then we call it a \textit{$J$-saturated} module.

Denote by $\mathcal{T}(J)$ (resp., $\mathcal{F}(J)$) the category of $J$-torsion (resp., $J$-torsion free) $\mathcal{O}$-modules. We show that $(\mathcal{T}(J), \mathcal{F}(J))$ is a hereditary torsion pair on $\mathcal{O} \Mod$, and it completely determines the Grothendieck topology. That is, if $K$ is another Grothendieck topology on $\Co$ with respect to which $\mathcal{O}$ is still a structure sheaf, then $J = K$ if and only if $\mathcal{T}(J) = \mathcal{T}(K)$. For details, see Proposition \ref{bijective correspondence} and Proposition \ref{torsion theory}.

The following result gives a characterization of sheaves of modules over $(\BC^{\op}, \mathcal{O})$ in terms of the torsion theory mentioned above, and extends \cite[Theorem 3.8]{DLLX} to full generality. Compared to the original definition of sheaves in terms of amalgamations of matching families, this characterization is homological, and is more elementary for applications in representation theory.

\begin{theorem} \label{Theorem 1}
An $\mathcal{O}$-module $V$ is a sheaf of modules on $(\BC^{\op}, \mathcal{O})$ if and only if it is $J$-saturated.
\end{theorem}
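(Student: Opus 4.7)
The plan is to introduce the intermediate condition of Geigle-Lenzing right perpendicularity, namely
\[
\Hom_{\mathcal{O}}(T, V) = 0 = \Ext^1_{\mathcal{O}}(T, V) \quad \text{for every } T \in \mathcal{T}(J),
\]
and to establish the theorem by showing that both the sheaf property and the saturated property are separately equivalent to it. The hereditary torsion pair $(\mathcal{T}(J), \mathcal{F}(J))$ from Proposition \ref{torsion theory} is the principal tool for the first equivalence, and the Yoneda sieve quotients are the tool for the second.

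For the equivalence between right perpendicularity and the $J$-saturated condition, the $\Hom$-vanishing is easily seen to coincide with torsion-freeness: one direction takes $T = \mathcal{T}_J(V)$, the other observes that any map $T \to V$ with $T$ torsion has image in $\mathcal{T}_J(V) = 0$. For the $\Ext^1$-vanishing, assuming $V$ is torsion free, I would apply $\mathcal{T}_J$ to an arbitrary extension $0 \to V \to W \to T \to 0$ with $T$ torsion: the long exact sequence yields $0 \to \mathcal{T}_J(W) \to T \to \mathrm{R}^1\mathcal{T}_J(V)$, so if $\mathrm{R}^1\mathcal{T}_J(V) = 0$ then $\mathcal{T}_J(W) \cong T$ provides a splitting. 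Conversely, embedding $V$ into an injective $I$ with cokernel $C$ and using that the hereditary property makes $\mathcal{T}_J(I)$ injective identifies $\mathrm{R}^1\mathcal{T}_J(V)$ with $\mathcal{T}_J(C)$, which must vanish by $\Ext^1_{\mathcal{O}}(\mathcal{T}_J(C), V) = 0$ applied to the pullback of $I \twoheadrightarrow C$ along $\mathcal{T}_J(C) \hookrightarrow C$. This is the standard Gabriel-Stenstr\"{o}m characterization of closed objects, transplanted to $\mathcal{O}\Mod$.

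For the equivalence between right perpendicularity and the sheaf property, I would use the free $\mathcal{O}$-module $\mathcal{O}[y(x)]$ on the representable $y(x)$ and, for each covering sieve $S \in J(x)$, the sub-$\mathcal{O}$-module $\mathcal{O}[S]$ generated by $S$; the quotient $Q_S := \mathcal{O}[y(x)]/\mathcal{O}[S]$ visibly belongs to $\mathcal{T}(J)$. Since $\mathcal{O}[y(x)]$ is projective, the long exact sequence of $\Hom_{\mathcal{O}}(-, V)$ applied to $0 \to \mathcal{O}[S] \to \mathcal{O}[y(x)] \to Q_S \to 0$ reads
\[
0 \to \Hom_{\mathcal{O}}(Q_S, V) \to V_x \to \Hom_{\mathcal{O}}(\mathcal{O}[S], V) \to \Ext^1_{\mathcal{O}}(Q_S, V) \to 0,
\]
and the middle map is, under the standard identification of $\Hom_{\mathcal{O}}(\mathcal{O}[S], V)$ with matching families on $S$, precisely the classical sheaf comparison map. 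Hence the sheaf property at $x$ and $S$ is exactly the simultaneous vanishing of $\Hom_{\mathcal{O}}(Q_S, V)$ and $\Ext^1_{\mathcal{O}}(Q_S, V)$, so right perpendicularity immediately implies sheafness.

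The main obstacle is the remaining direction: upgrading the vanishing from the sieve quotients $Q_S$ to all $T \in \mathcal{T}(J)$. For this I would argue that every torsion $T$ admits a surjection $\bigoplus_i Q_{S_i} \twoheadrightarrow T$ --- each element $t \in T_y$ killed by some $S \in J(y)$ produces a map $Q_S \to T$ sending the class of the identity to $t$ --- whose kernel is again torsion by the hereditary property. A standard dimension-shifting argument along such presentations, using that direct sums commute with $\Hom$ and $\Ext^1$ in the first argument, then propagates right perpendicularity from the generators $Q_S$ to all of $\mathcal{T}(J)$, closing the theorem.
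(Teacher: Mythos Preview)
Your proposal is sound and yields a valid proof. The one slip is in the step $(3) \Rightarrow (2)$: the phrase ``the hereditary property makes $\mathcal{T}_J(I)$ injective'' is not the right lemma, and in any case would not by itself give $\mathrm{R}^1\mathcal{T}_J(V) \cong \mathcal{T}_J(C)$. What you actually need is $\mathcal{T}_J(I) = 0$, and for that you should take $I$ to be the injective \emph{hull} of $V$; hereditarity of the torsion pair is equivalent to $\mathcal{F}(J)$ being closed under injective envelopes, so $I$ is torsion free. With that correction your pullback argument goes through (the pullback embeds in the torsion-free $I$, forcing the split-off copy of $\mathcal{T}_J(C)$ to vanish).

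Your route differs from the paper's in its organization. The $(1) \Leftrightarrow (3)$ argument via the sieve quotients $Q_S = P(x)/\underline{S}$ and the two-step dimension shift is precisely what the paper records as Corollary~\ref{another equivalence}, derived \emph{after} the theorem rather than used as its engine. For the theorem itself the paper establishes $(1) \Leftrightarrow (2)$ directly, by working with matching families: for $(1) \Rightarrow (2)$ it invokes the Geigle--Lenzing localization sequence $0 \to V \to \tilde{V} \to W \to 0$ with $\tilde{V}$ saturated and $W$ torsion, then uses the sheaf axiom on $V$ to amalgamate and force $W = 0$; for $(2) \Rightarrow (1)$ it embeds $V$ into its sheafification $V^{+}$ and pulls amalgamations back from $V^{+}$ to $V$ using that $V^{+}/V$ is torsion free. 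Your approach has the virtue of being self-contained---it needs neither the sheafification functor nor the external localization machinery of \cite{GLen}---while the paper's approach keeps the classical matching-family picture in the foreground. The paper's treatment of $(2) \Leftrightarrow (3)$ is also a touch slicker than yours: it observes that, once one has the torsion-free injective hull $E$ of $V$, both conditions reduce to the single statement that $E/V$ is torsion free, bypassing your extension-splitting and pullback manoeuvres.
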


It is direct to check that $J$-saturated $\mathcal{O}$-modules are precisely $\mathcal{O}$-modules right perpendicular to $J$-torsion $\mathcal{O}$-modules in the sense of Geigle-Lenzing \cite{GLen}; for details, see \cite[Section 1]{GLen}. Consequently, applying \cite[Propositions 2.2 and 2.5]{GLen}, one immediately deduces the following result:

\begin{corollary} \label{Theorem 2}
One has the following equivalence
\[
\Sh(\BC^{\op}, \mathcal{O}) \simeq \mathcal{O} \Mod / \mathcal{T}(J),
\]
namely the category of sheaves of modules on $(\BC^{\op}, \mathcal{O})$ is equivalent to the Serre quotient of the category of $\mathcal{O}$-modules by the category of $J$-torsion $\mathcal{O}$-modules.
\end{corollary}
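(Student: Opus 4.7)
The plan is to chain together three facts, two of which are already in the paper and one of which is the general machinery of Geigle--Lenzing, so the whole argument is essentially formal. First, I would invoke Theorem~\ref{Theorem 1} to identify the full subcategory $\Sh(\BC^{\op}, \mathcal{O}) \subseteq \mathcal{O}\Mod$ with the full subcategory of $J$-saturated $\mathcal{O}$-modules. Second, I would translate the definition of $J$-saturation, namely $\mathcal{T}_J(V) = 0$ and $\mathrm{R}^1 \mathcal{T}_J(V) = 0$, into the perpendicularity condition of \cite{GLen}: because $\mathcal{T}(J)$ is closed under subobjects (the hereditary part of Proposition~\ref{torsion theory}), the first vanishing is equivalent to $\Hom_{\mathcal{O}}(T, V) = 0$ for every $T \in \mathcal{T}(J)$, and combined with this, the second vanishing is equivalent to $\Ext^1_{\mathcal{O}}(T, V) = 0$ for every $T \in \mathcal{T}(J)$. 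Hence $V$ is $J$-saturated if and only if $V$ lies in the right perpendicular subcategory $\mathcal{T}(J)^{\perp}$.

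Third, since $\mathcal{O}\Mod$ is a Grothendieck category and $\mathcal{T}(J)$ is a hereditary torsion class in it (again by Proposition~\ref{torsion theory}), the hypotheses of \cite[Propositions~2.2 and~2.5]{GLen} are satisfied. Those propositions state that the composition
\[
\mathcal{T}(J)^{\perp} \hookrightarrow \mathcal{O}\Mod \twoheadrightarrow \mathcal{O}\Mod / \mathcal{T}(J)
\]
is an equivalence of categories, with quasi-inverse induced by the reflector (which here is precisely the sheafification functor). Putting the three steps together yields the claimed equivalence $\Sh(\BC^{\op}, \mathcal{O}) \simeq \mathcal{T}(J)^{\perp} \simeq \mathcal{O}\Mod / \mathcal{T}(J)$.

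The only non-bookkeeping step is the second one, the identification of the derived vanishing $\mathrm{R}^1 \mathcal{T}_J(V) = 0$ with $\Ext^1$-perpendicularity to $\mathcal{T}(J)$. I expect this to be the main (mild) obstacle, and I would handle it by writing $\mathcal{T}_J$ as the filtered colimit of $\Hom_{\mathcal{O}}(\mathcal{O}/\mathcal{O}_S, -)$ over covering sieves $S$ (whose quotients $\mathcal{O}/\mathcal{O}_S$ are $J$-torsion), and using the standard fact that for a hereditary torsion pair the derived functor of the torsion functor detects exactly the $\Ext^1$ from torsion objects. Everything else, including verifying that the equivalence in the corollary is the one induced by sheafification, is automatic from the Geigle--Lenzing propositions cited.
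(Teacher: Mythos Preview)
Your proposal is correct and matches the paper's approach essentially line for line: invoke Theorem~\ref{Theorem 1}, identify $J$-saturated with right perpendicular to $\mathcal{T}(J)$, then apply \cite[Propositions~2.2 and~2.5]{GLen}. The only difference is in your ``mild obstacle'' step: rather than your filtered-colimit description of $\mathcal{T}_J$, the paper handles $(2)\Leftrightarrow(3)$ more directly (inside the proof of Theorem~\ref{characterize sheaves}) by embedding a torsion-free $V$ into its injective hull $E$, observing $E$ is again torsion-free since the torsion pair is hereditary, and then reading both conditions off the exact sequence $0\to V\to E\to E/V\to 0$.
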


\begin{remark}
It is well known that the sheaf category is a localization of the presheaf category by a localizing subcategory. This corollary gives an explicit description of the localizing subcategory: the full subcategory consists of all $J$-torsion modules.
\end{remark}

The above homological characterization of sheaves of modules and equivalence of categories bring quite a few useful consequences. For instance, injective objects in the sheaf category are precisely $J$-torsion free injective $\mathcal{O}$-modules (see Corollary \ref{injective sheaves}); sheaf cohomology can be reinterpreted via the torsion functor $\mathcal{T}_J$ (see Corollary \ref{sheaf cohomology}). Furthermore, one can obtain a very elementary description of the sheafification functor (see the explanation after Corollary \ref{equivalence}).

The Serre quotient still seems mysterious for practical purpose. For instance, even for the special case that $\Co$ is an orbit category of a finite group, $J$ is the atomic Grothendieck topology, and $\mathcal{O}$ is a constant structure sheaf induced by a field, in general it is a hard task to classify irreducible objects in $\mathcal{O} \Mod / \mathcal{T}(J)$ (as we will see later, an answer of this question can bring a significant progress for verifying Alperin's weight conjecture). The second goal of this paper is to find some specific categories such that one can obtain a relatively explicit description of this Serre quotient.

It has been pointed out in \cite[IV.9]{AGV} by Grothendieck and Verdier that every Grothendick topology on $\Co$ is \textit{rigid} (or called a \textit{subcategory topology}) when $\C$ is a finite Karoubi category, and in this case $\Sh(\BC^{\op}, \mathcal{O})$ is equivalent to the category of presheaves of modules on a full subcategory of $\Co$. Similar results have been established by Hemelaer \cite{Hem} and Lindenhovius \cite{Lin} for posets satisfying certain extra conditions. Although it seems to be a very difficult task to classify small categories whose Grothendieck topologies are rigid, in this paper we obtain a satisfactory answer for categories satisfying the following combinatorial condition: the binary relation $\leqslant$ on $\Ob(\C)$ defined by setting $x \leqslant y$ if and only if $\C(x, y) \neq \emptyset$ is a partial order. We call them \textit{directed categories}. The reason we choose them is plain: many important categories of particular interest in representation theory are directed categories, for examples, posets, orbit categories of groups, quivers without loops or oriented cycles, and etc.

\begin{theorem} \label{Theorem 3}
Let $\C$ be a directed category such that $\C(x, x)$ is finite for every $x \in \Ob(\C)$. Then every Grothendieck topology on $\Co$ is rigid if and only if $\C$ is a noetherian EI category; that is, the poset $(\Ob(\C), \leqslant)$ satisfies the ascending chain condition and every endomorphism in $\C$ is an isomorphism.
\end{theorem}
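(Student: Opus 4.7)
The plan is to prove the two implications separately, using a candidate subcategory encoding rigidity. For any Grothendieck topology $J$ on $\Co$, let $\mathcal{D}_J \subseteq \C$ denote the full subcategory consisting of those objects $x$ for which the maximal sieve is the unique $J$-covering sieve on $x$ in $\Co$. Rigidity of $J$ means that $J$ coincides with the subcategory topology associated with $\mathcal{D}_J$, and the two directions amount to establishing or obstructing this coincidence.

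For sufficiency, assume $\C$ is noetherian EI with $\C(x,x)$ finite for every $x$. The first observation is that any non-maximal $J$-covering sieve $S$ on $x$ in $\Co$ satisfies $S \cap \C(x,x) = \emptyset$: any $g \in S \cap \C(x,x)$ would be an automorphism by EI, and $\Co$-sieve closure under precomposition by $g^{-1}$ would force $\mathrm{id}_x \in S$, contradicting non-maximality. Hence $S$ is supported on morphisms $x \to y$ in $\C$ with $y > x$ strictly. A noetherian induction descending on $(\Ob(\C), \leqslant)$ then refines $S$ into $\mathcal{D}_J$: the inductive hypothesis applied to pullbacks of $S$ along its elements, combined with directedness (so that mutually composable morphisms between $y$ and $w$ force $y = w$), pins the codomains of morphisms in $S$ into $\mathcal{D}_J$; the transitivity axiom for $J$ then reassembles these local data into the rigidity statement for $S$. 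Finiteness of $\mathrm{Aut}(y)$ is used to keep the orbit combinatorics of generators under control.

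For necessity, proceed by contrapositive. If EI fails, pick a non-invertible $f \in \C(x,x)$; finiteness of $\C(x,x)$ forces left-invertibility to coincide with invertibility, so $\mathrm{id}_x$ is not in the principal sieve $\langle f \rangle$ on $x$ in $\Co$ generated by $f$. Let $J_f$ be the smallest Grothendieck topology on $\Co$ in which $\langle f \rangle$ is covering. Then $x \notin \mathcal{D}_{J_f}$, but every morphism in $\langle f \rangle$ has $\C$-codomain $x$ and hence cannot factor through any $\mathcal{D}_{J_f}$-object strictly above $x$; rigidity is thereby obstructed. If ACC fails, fix a strictly ascending chain $x_1 < x_2 < \cdots$ in $\C$ and take the topology generated by declaring, at each $x_i$, the sieve $T_i$ of morphisms factoring through some $x_j$ with $j > i$ to be covering. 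Each $T_i$ is proper since $\mathrm{id}_{x_i} \in T_i$ would yield a morphism $x_j \to x_i$ for some $j > i$, contradicting directedness. No $x_i$ belongs to the resulting $\mathcal{D}$, so along the chain no cover can be generated by morphisms into $\mathcal{D}$-objects, and rigidity fails.

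The principal difficulty is the noetherian induction in the sufficiency direction: one must simultaneously use transitivity of $J$, the EI property, directedness, and finiteness of $\mathrm{Aut}(y)$ to show both that the inductive refinement terminates inside $\mathcal{D}_J$ and that the resulting canonical covers exactly recover $J$. A secondary but more routine task is verifying the Grothendieck axioms for the two ad hoc topologies built in the necessity direction, which requires combinatorial bookkeeping about how sieves propagate under pullback and composition.
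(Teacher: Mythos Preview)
Your sufficiency sketch is in the same spirit as the paper's argument (noetherian induction showing that the minimal cover at each object is generated by morphisms to $J$-irreducibles), though the paper organises it differently: it first proves that $J$ is closed under \emph{arbitrary} intersections (Lemma~\ref{intersection}), then characterises topologies via ``consistent families'' of minimal covers, and finally runs a transfinite induction. One point to correct: you say finiteness of $\mathrm{Aut}(y)$ is used in this direction, but in fact it is not---the paper notes explicitly (Remarks~\ref{finite monoid} and~\ref{locally finite}) that the sufficient direction holds without any finiteness hypothesis on endomorphism monoids.

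The necessity direction has a genuine gap in the EI step. Your claim that ``every morphism in $\langle f\rangle$ has $\C$-codomain $x$'' is false: the sieve on $x$ in $\Co$ generated by $f$ is $\{gf \mid g\in\C(x,-)\}$, so its members have $\C$-codomain ranging over all $y\geqslant x$, not just $x$. Consequently the sentence ``hence cannot factor through any $\mathcal{D}_{J_f}$-object strictly above $x$'' does not follow, and there is no argument left for why $J_f$ fails to be rigid. (It may still fail---in small examples it does---but proving this requires understanding the minimal covers in the \emph{generated} topology, which is delicate.) The paper avoids this entirely: it proves two restriction lemmas showing that if every topology on $\Co$ is rigid then the same holds on any ideal and on any coideal, reduces to the one-object subcategory on $x$, and there invokes the elementary fact that a finite monoid that is not a group carries a dense topology distinct from both the trivial and the maximal one---hence non-rigid. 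This is where finiteness of $\C(x,x)$ is actually used.

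Your ACC argument is closer to workable but also incomplete. You construct a topology in which no $x_i$ is irreducible and then assert that ``along the chain no cover can be generated by morphisms into $\mathcal{D}$-objects''; but $\mathcal{D}$ may contain objects outside the chain, and morphisms from $x_i$ to such objects (and their post-compositions) could in principle assemble into a cover. You would need to show, for at least one $x_i$, that the sieve $S_{x_i}$ generated by morphisms to irreducibles is \emph{not} a cover in the generated topology---this is not automatic. The paper again sidesteps the issue via restriction: it passes to the coideal $\C_{uu}$ of upward-unbounded objects and shows that the dense topology there has no irreducible objects at all, hence cannot be rigid.
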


\begin{remark}
The if direction actually holds without the assumption that every $\C(x, x)$ is a finite set; see Remarks \ref{finite monoid} and \ref{locally finite}.
\end{remark}

Consequently, when $\C$ is a noetherian EI category, questions concerning $\Sh(\BC^{\op}, \mathcal{O})$ can be completely transferred to questions about representations of full subcategories of $\C$, so we have no ``generic" sheaf theory in this situation. Explicitly, by the Comparison Lemma \cite[C2.2, Theorem 2.2.3]{Jo} (see also \cite[C2.2, Example 2.2.4(d)]{Jo}), one has:

\begin{corollary} \label{Theorem 4}
Let $\C$ be a noetherian EI category, $J$ a Grothendieck topology on $\Co$, $\mathcal{O}$ a structure sheaf over $\BC^{\op}$, and $\mathscr{D}$ the full subcategory of $\C$ consisting of objects $x$ with $J(x) = \{ \C(x, -) \}$. Then:
\begin{enumerate}
\item an $\mathcal{O}$-module $V$ lies in $\mathcal{T} (J)$ if and only if $V_x = 0$ for all $x \in \Ob(\D)$;

\item one has
\[
\Sh(\BC^{\op}, \, \mathcal{O}) \simeq \mathcal{O}_{\D} \Mod,
\]
where $\mathcal{O}_{\D}$ is the structure sheaf on $\D^{\op}$ obtained via restricting $\mathcal{O}$ to $\D^{\op}$.
\end{enumerate}
\end{corollary}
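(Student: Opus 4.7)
The plan is to use Theorem \ref{Theorem 3} to present $J$ as a rigid (subcategory) topology, and then deduce both claims from this concrete form. Since $\C$ is a noetherian EI category, Theorem \ref{Theorem 3} produces a full subcategory $\mathscr{E} \subseteq \C$ for which $J$ coincides with the subcategory topology determined by $\mathscr{E}$. The crucial first step is to verify that $\mathscr{E}$ coincides with the explicitly defined $\D$; with this identification in hand, part (1) reduces to a direct chase through the definition of $J$-torsion, and part (2) becomes a standard application of the Comparison Lemma.

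To identify $\mathscr{E}$ with $\D$, I would use the defining feature of a subcategory topology: for $x \in \mathscr{E}$ the identity $\mathrm{id}_x$ factors trivially through $\mathscr{E}$, so the only covering sieve at $x$ is the maximal sieve $\C(x, -)$, while for $x \notin \mathscr{E}$ the sieve of morphisms from $x$ factoring through $\mathscr{E}$ is a proper covering sieve. Matching this with the definition $\D = \{ x : J(x) = \{ \C(x, -) \} \}$ immediately yields $\mathscr{E} = \D$. For part (1), the forward direction is then immediate: if $V \in \mathcal{T}(J)$ and $x \in \D$, every $v \in V_x$ is killed by each morphism in the unique covering sieve $\C(x, -)$, and evaluating on $\mathrm{id}_x$ forces $v = 0$. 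Conversely, assuming $V_y = 0$ for every $y \in \D$, any $v \in V_x$ is killed by the covering sieve consisting of morphisms from $x$ that factor through some object of $\D$: each such morphism $f \colon x \to z$ factors as $x \to y \to z$ with $y \in \D$, and the image $V(f)(v)$ factors through $V_y = 0$, hence vanishes. So $v$ is $J$-torsion and $V \in \mathcal{T}(J)$.

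For part (2), I would apply the Comparison Lemma to the full inclusion $\D^{\op} \hookrightarrow \Co$. Its denseness hypothesis holds by the very fact that $J$ is the subcategory topology for $\D$, so every object of $\Co$ is covered by objects of $\D^{\op}$. Restricted to $\D^{\op}$, the topology $J$ becomes trivial by the definition of $\D$, so sheaves on $(\D^{\op}, J|_{\D^{\op}})$ coincide with presheaves on $\D^{\op}$; after incorporating the restricted structure sheaf $\mathcal{O}_{\D}$, this yields the desired equivalence $\Sh(\BC^{\op}, \mathcal{O}) \simeq \mathcal{O}_{\D} \Mod$. The step I expect to require most care is the identification $\mathscr{E} = \D$, since it requires matching the axiomatic description of a subcategory topology against the pointwise definition of $\D$; beyond this, the only remaining verification is that the Comparison Lemma lifts from sheaves of sets to sheaves of $\mathcal{O}$-modules, which is standard once $\mathcal{O}_{\D}$ is defined as the restriction.
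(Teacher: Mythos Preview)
Your proposal is correct and follows the same route as the paper: invoke rigidity (Theorem~\ref{Theorem 3}), use the explicit description of the minimal covering sieve $S_x$ as the sieve generated by morphisms to $J$-irreducible objects for part~(1), and apply the Comparison Lemma for part~(2). One clarification: the identification $\mathscr{E} = \D$ that you flag as the delicate step is in fact immediate, since rigidity is \emph{defined} in the paper (Definition~\ref{def of rigid topologies}) via the sieve generated by morphisms to $J$-irreducible objects, and $\D$ is exactly the set of $J$-irreducible objects; there is no separate subcategory $\mathscr{E}$ produced by Theorem~\ref{Theorem 3} that must then be matched with $\D$.
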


However, when $\C$ is not noetherian, a classification of Grothendieck topologies on $\Co$ becomes much harder. In this paper we solve this question only for EI categories $\C$ of type $\mathbb{N}$ (see the beginning of Section 6 for a precise definition) or type $\mathbb{Z}$. Although it seems to the reader that too many combinatorial restrictions are imposed on them, there are still quite a few interesting examples whose representation theory are actively studied (see for instances \cite{CEF, CEFN, Nag1, Nag2, SS, SS1}):
\begin{itemize}
\item skeletal full subcategories of the category $\FI$ of finite sets and injections, where $\FI$ is equivalent to the opposite category of an orbit category of the permutation group on $\N$ (\cite[Examples A2.1.11(h) and D3.4.10]{Jo});

\item skeletal full subcategories of the opposite category of the category $\mathrm{FS}$ of finite sets and surjections, where $\mathrm{FS}$ is equivalent to an orbit category of the automorphism group of the free Boolean algebra on a countable infinity of generators, or the self-homeomorphism group of the Cantor set (\cite[Example D3.4.12]{Jo});

\item skeletal full subcategories of the category $\mathrm{VI}$ of finite dimensional vectors spaces over a field $k$ and linear injections, where $\mathrm{VI}$ is equivalent to the opposite category of an orbit category of $\varinjlim_n \mathrm{GL}_n (k)$ by \cite[Subsection 5.3.1]{DLLX}.
\end{itemize}

We can classify all Grothendieck topologies on $\Co$ when $\C$ is an EI category of type $\mathbb{N}$ or $\mathbb{Z}$.

\begin{theorem} \label{Theorem 5}
Let $\C$ be an EI category of type $\mathbb{N}$ or $\mathbb{Z}$. One has:
\begin{enumerate}
\item Grothendieck topologies $J$ on $\Co$ such that $J(x)$ does not contain the empty functor for each $x \in \Ob(\C)$ are parameterized by functions
\[
\boldsymbol{d}: \Ob(\C) \longrightarrow \N \cup \{\infty\}
\]
satisfying the following condition: if $\boldsymbol{d} (n) \neq 0$, then $\boldsymbol{d} (n + 1) = \boldsymbol{d} (n) - 1$.

\item All other Grothendieck topologies on $\Co$ are parameterized by functions
\[
\boldsymbol{d}: \Ob(\C) \longrightarrow \N \cup \{-\infty\}
\]
satisfying the following conditions: $\boldsymbol{d} (n) = -\infty$ for $n \gg 0$; and if $\boldsymbol{d} (n) \neq 0$, then $\boldsymbol{d} (n + 1) = \boldsymbol{d} (n) - 1$.

\item A Grothendieck topology $J$ on $\C^{\op}$ is rigid if and only if its corresponded function $\boldsymbol{d}$ satisfies the following condition: $\boldsymbol{d}(n) \neq \infty$ for all $n \in \N$.
\end{enumerate}
\end{theorem}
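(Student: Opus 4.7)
The plan is to classify explicitly all sieves on each object of $\C$, then translate the Grothendieck topology axioms into combinatorial data on the function $\boldsymbol{d}$. Identifying $\Ob(\C)$ with $\N$ (or $\mathbb{Z}$), for each object $n$ and each $k \in \{n, n+1, \ldots\} \cup \{\infty\}$ introduce the sieve $S_k^{(n)}$ on $n$ (in $\Co$) consisting of all $f : n \to m$ in $\C$ with $m \geq k$; in particular $S_n^{(n)}$ is the maximal sieve and $S_\infty^{(n)} = \emptyset$. Using the defining features of a type-$\N$/$\mathbb{Z}$ EI category---that $\C(k,k)$ acts transitively on $\C(n,k)$ by post-composition and that every morphism $n \to m$ factors through each intermediate object---the first claim is that every sieve on $n$ is exactly one of the $S_k^{(n)}$: the transitive $\mathrm{Aut}$-action forces any nonempty intersection $S \cap \C(n,k)$ of a sieve $S$ with a morphism set to fill out all of $\C(n,k)$, and factorability together with closure under post-composition then propagates this to all higher levels. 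Sieves on $n$ thus form a descending chain, and a standard argument using the transitivity axiom makes each $J(n)$ upward closed in this chain, so encoded by a single extended integer.

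For a given $J$ as in (1), set $\boldsymbol{d}(n) = \sup\{k-n : S_k^{(n)} \in J(n),\ k < \infty\} \in \N \cup \{\infty\}$; for $J$ as in (2), adjoin $\boldsymbol{d}(n) = -\infty$ precisely when $S_\infty^{(n)} \in J(n)$. A direct pullback computation gives $f^\ast S_k^{(n)} = S_{\max(n',k)}^{(n')}$ for $f : n \to n'$ in $\C$, so pullback stability yields $\boldsymbol{d}(n+1) \geq \boldsymbol{d}(n) - 1$ in the finite case and propagates the values $\infty$ and $-\infty$ upward (which also delivers the tail condition in (2)). The crux is the reverse inequality for finite $\boldsymbol{d}(n) = k > 0$: assuming $\boldsymbol{d}(n+1) \geq k$ for contradiction, iterated pullback forces $\boldsymbol{d}(m) \geq n+k+1-m$ for all $n+1 \leq m \leq n+k$, so $f^\ast S_{n+k+1}^{(n)} \in J(\cod f)$ for every $f \in S_{n+1}^{(n)} \in J(n)$, whence the transitivity axiom of $J$ forces $S_{n+k+1}^{(n)} \in J(n)$---contradicting $\boldsymbol{d}(n) = k$. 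This establishes the compatibility condition in both (1) and (2). Conversely, given $\boldsymbol{d}$ satisfying the compatibility, define $J_{\boldsymbol{d}}(n) = \{S_k^{(n)} : n \leq k \leq n + \boldsymbol{d}(n)\}$, enlarged to all sieves when $\boldsymbol{d}(n) = -\infty$; the compatibility is exactly what pullback stability requires, and transitivity is automatic from the linear and upward-closed structure.

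For part (3), let $\D$ be the full subcategory of $\C$ on $\{n : \boldsymbol{d}(n) = 0\}$, which by Corollary~\ref{Theorem 4}(1) is precisely the locus where $J$ is trivial. The compatibility gives $\boldsymbol{d}(n) = \min\{k \geq 0 : n+k \in \D\}$ whenever this minimum is finite, so $\boldsymbol{d}$ is everywhere finite if and only if $\D$ is cofinal in $\Ob(\C)$. In the cofinal case, a direct check using the sieve classification shows $J_{\boldsymbol{d}}$ is precisely the subcategory topology on $\D$, hence rigid. Conversely, if $\boldsymbol{d}(n_0) = \infty$, any subcategory topology equal to $J$ must be induced by $\D$ itself (again by Corollary~\ref{Theorem 4}), but no element of $\D$ lies above $n_0$, so that subcategory topology forces the empty sieve to cover $n_0$---contradicting $\boldsymbol{d}(n_0) \neq -\infty$. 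The main obstacle is the transitivity argument in the second paragraph: pullback alone gives only the one-sided inequality $\boldsymbol{d}(n+1) \geq \boldsymbol{d}(n) - 1$, and recovering the reverse inequality genuinely depends on the linear chain structure of sieves and the full strength of the type-$\N$/$\mathbb{Z}$ EI hypotheses.
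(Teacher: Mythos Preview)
Your approach is essentially the same as the paper's: the paper also classifies the sieves on each object as a single descending chain $S(n,r)$ (your $S_{n+r}^{(n)}$), encodes $J$ by the function $\boldsymbol{d}$, obtains $\boldsymbol{d}(n+1) \geq \boldsymbol{d}(n)-1$ from the stability axiom, and then uses the transitivity axiom to rule out strict inequality by exactly your contradiction argument (the paper takes the covering sieve $S(m,d_m)$ rather than your $S_{n+1}^{(n)}$, but the computations coincide). The converse check that $J_{\boldsymbol{d}}$ is a topology, which you call ``automatic'', does require a short verification of transitivity that the paper carries out explicitly; this is not entirely formal, so you should fill it in.

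One genuine issue: your invocations of Corollary~\ref{Theorem 4} in part~(3) are misplaced. That corollary is stated for \emph{noetherian} EI categories, whereas a type-$\N$ or type-$\mathbb{Z}$ category is artinian and not noetherian, so it does not apply here. Fortunately you do not need it: the identification of $\D = \{n : \boldsymbol{d}(n)=0\}$ with the $J$-irreducible objects is immediate from the definition of $\boldsymbol{d}$, and the claim that a rigid topology is determined by its irreducible objects is just the definition of rigidity. With those references removed and the claims argued directly, your proof of (3) matches the paper's.
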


For an EI category $\C$ of type $\mathbb{N}$ or $\mathbb{Z}$, it follows that a Grothendieck topology $J$ on $\Co$ which is not a subcategory topology is almost atomic; that is, there is a full subcategory of $\Co$ containing all but finitely many objects such that the restriction of $J$ to this subcategory is the atomic Grothendieck topology. Accordingly, to describe sheaves of modules over arbitrary ringed sites $(\BC^{\op}, \, \mathcal{O})$, we only need to understand sheaves of modules over ringed atomic sites. For a precise meaning of this statement, see Proposition \ref{sheaves for type n} and Remark \ref{type Z}.

We can use sheaf theory to deduce some interesting results in representation theory of combinatorial categories. For instance, let $k$ be a field of characteristic 0, and let $\C$ be a skeleton subcategory of the category $\FI$ of finite sets and injections, or the category $\VI_q$ of finite dimensional vector spaces over a finite field $\mathbb{F}_q$ and $\mathbb{F}_q$-linear injections. It has been proved by Gan-Li \cite{GL} (see also Sam-Snowden \cite{SS} for $\FI$) that every indecomposable projective $\C$-module over $k$ is also injective. Surprisingly, this locally self-injective property and many other fundamental results of $\C$-modules established by Church-Ellenberg-Farb-Nagpal \cite{CEFN}, Gan-Li \cite{GL}, Gan-Li-Xi \cite{GLX}, Nagpal \cite{Nag1, Nag2}, and Sam-Snowden \cite{SS, SS1} extend to representations of its infinite full subcategories.

\begin{theorem} \label{Theorem 6}
Let $\C$ be specified as above, $k$ a commutative noetherian ring, and $\D$ a full subcategory of $\C$ containing infinitely many objects. Then every finitely generated $\D$-module over $k$ is noetherian. Moreover, if $k$ is a field of characteristic 0, then one has
\begin{enumerate}
\item every indecomposable projective $\D$-module over $k$ is also injective;

\item the category $\D \module$ of finitely generated $\D$-modules has enough injectives;

\item up to isomorphism, an indecomposable injective $\D$-module is one of the following three:
\begin{itemize}
\item a summand of $k\D(x, -)$ for a certain $x \in \Ob(\D)$,
\item a summand of the restriction of $k\C(x, -)$ for a certain $x \in \Ob(\C) \setminus \Ob(\D)$,
\item a finite dimensional injective $\D$-module;
\end{itemize}

\item every finitely generated $\D$-module has finite injective dimension;

\item one has the following equivalences
\[
\D \module / \D \fdmod \simeq \C \fdmod,
\]
where $\D \fdmod$ (resp., $\C \fdmod$) is the category of finite dimensional $\D$-modules (resp, $\C$-modules), and $\D \module / \D \fdmod$ is the Serre quotient.
\end{enumerate}
\end{theorem}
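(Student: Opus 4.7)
The plan is to realize $\D \Mod$ as a sheaf category over the ringed site $(\C^{\op}, \mathcal{O})$ with the rigid Grothendieck topology induced by the inclusion $\D \subseteq \C$, and then transfer the known structural results on $\C$-modules (CEFN, Gan-Li, Gan-Li-Xi, Nagpal, Sam-Snowden) through this equivalence, using the dictionary established earlier in the paper.

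To set up the equivalence, I identify $\Ob(\C)$ with $\N$ and define $\boldsymbol{d}\colon \N \to \N$ by $\boldsymbol{d}(n) = \min\{m - n : m \geq n, \, m \in \Ob(\D)\}$. Since $\Ob(\D)$ is infinite, $\boldsymbol{d}$ is everywhere finite, and one checks at once that if $\boldsymbol{d}(n) \neq 0$ then $\boldsymbol{d}(n+1) = \boldsymbol{d}(n) - 1$, so by Theorem \ref{Theorem 5}(3) it parameterizes a \emph{rigid} Grothendieck topology $J$ on $\C^{\op}$ whose associated full subcategory (in the sense of the Comparison Lemma) is exactly $\D$. Taking $\mathcal{O} = k$ as the constant structure sheaf, the Comparison Lemma together with Corollary \ref{Theorem 2} gives
\[
\D \Mod \simeq \Sh(\C^{\op}, \mathcal{O}) \simeq \C \Mod / \mathcal{T}(J),
\]
implemented by restriction $\iota^* \colon \C \Mod \to \D \Mod$. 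Noetherianity follows quickly from this picture: any f.g. $\D$-module surjects from $\bigoplus_i \iota^* k\C(x_i, -)$; each $k\C(x_i, -)$ is noetherian by CEFN (or Sam-Snowden for $\VI_q$); and fullness of $\D \subseteq \C$ implies that every $\D$-submodule of $\iota^* W$ equals the restriction of the $\C$-submodule of $W$ it generates, so $\iota^*$ preserves the noetherian property.

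Assume henceforth that $k$ is a field of characteristic $0$. By Corollary \ref{injective sheaves}, the injective objects of $\D \Mod \simeq \Sh(\C^{\op}, \mathcal{O})$ correspond to the injective $J$-torsion-free $\C$-modules. Since post-composition with an injection in $\FI$ or $\VI_q$ is itself injective on representable morphism spaces, no nonzero element of any $k\C(x,-)$ can be annihilated by a covering sieve, so $k\C(x,-)$ is $J$-torsion-free; combined with the injectivity of $k\C(x,-)$ established by Gan-Li (Sam-Snowden for $\VI_q$), this yields that $k\D(x,-) = \iota^* k\C(x,-)$ is injective in $\D \Mod$ for each $x \in \Ob(\D)$, proving (1) and yielding the first family of (3); for $x \in \Ob(\C) \setminus \Ob(\D)$ the same argument gives the second family as $\iota^* k\C(x,-)$. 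Enough injectives (2) is obtained by embedding any f.g. $\D$-module into its injective hull in the Grothendieck abelian category $\Sh(\C^{\op}, \mathcal{O})$; finite injective dimension (4) is inherited from the corresponding bounds of Gan-Li-Xi and Nagpal in $\C \Mod$ via the exact sheafification functor. The third family of (3) and the Serre quotient equivalence (5) will come from an analysis of the ``gap blocks'' between consecutive $\D$-objects: finiteness of $\boldsymbol{d}$ forces these blocks to be finite-dimensional, producing the f.d.\ injectives, and modding out $\D \fdmod$ from both sides of $\D \Mod \simeq \C \Mod / \mathcal{T}(J)$ yields the stated equivalence with $\C \fdmod$.

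The main obstacle, I expect, lies in completing (3) and (5): one must verify that every indecomposable injective $\D$-module falls into exactly one of the three listed families (with no exotic mixed type arising from the interplay between the pulled-back part and the gap contributions) and that the induced functor $\D \module / \D \fdmod \to \C \fdmod$ is essentially surjective and fully faithful. The key technical inputs should be Gan-Li's classification of indecomposable injectives in $\C \Mod$ together with the local finiteness encoded by $\boldsymbol{d}$ in Theorem \ref{Theorem 5}, which control the decomposition of each injective sheaf into its ``large'' torsion-free component pulled back from a $k\C(x,-)$ and its finitely supported gap contribution; carrying out this decomposition rigorously and matching it against Gan-Li's block description on the $\C$-side is the technical core.
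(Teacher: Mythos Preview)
Your setup with the rigid topology $J$ on $\C^{\op}$ and the resulting equivalence $\D\Mod \simeq \Sh(\C^{\op}, J, \underline{k})$ matches the paper's approach, and your arguments for noetherianity and for part (1) are essentially the paper's (the submodule-correspondence argument you give for noetherianity is a clean variant of the paper's use of $\coind$ and $\res$). However, for parts (2)--(5) the paper does \emph{not} stay with the rigid topology; it switches to the \emph{atomic} topology $J_a$ on $\D^{\op}$, and this is the missing ingredient in your plan.

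Concretely, your argument for (2) has a genuine gap: the injective hull of a finitely generated $\D$-module in the Grothendieck category $\D\Mod$ need not be finitely generated, so embedding into it does not show that $\D\module$ has enough injectives. The paper instead uses the $J_a$-torsion decomposition $0 \to V_T \to V \to V_F \to 0$ on $\D$ (so $V_T$ is finite dimensional) together with the equivalence $\Sh(\C^{\op}, J_a, \underline{k}) \simeq \Sh(\D^{\op}, J_a, \underline{k})$ coming from the fact that $\D^{\op}$ is $J_a$-dense in $\C^{\op}$; via $\coind$, this lets one embed $V_F$ into $\res(P)$ for a finitely generated projective $\C$-module $P$, which \emph{is} finitely generated and injective. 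The same atomic equivalence is what drives (5): the paper chains $\D\module/\D\fdmod \simeq \sh(\D^{\op}, J_a, \underline{k}) \simeq \sh(\C^{\op}, J_a, \underline{k}) \simeq \C\module/\C\fdmod \simeq \C\fdmod$, the last step being a prior result from \cite{DLLX}. Your proposed route of ``modding out $\D\fdmod$ from both sides of $\D\Mod \simeq \C\Mod/\mathcal{T}(J)$'' does not directly yield $\C\fdmod$, because $\mathcal{T}(J)$ for the rigid topology is the category of $\C$-modules vanishing on $\Ob(\D)$, not the category of finite dimensional $\C$-modules; the two localizations do not compose in the way you suggest without further argument. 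Likewise, (4) and the completeness of the list in (3) are handled in the paper via the atomic-sheaf equivalence and the known semisimplicity of $\C\fdmod$, not via the rigid topology alone.
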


\begin{remark}
A standard tool to prove these results for $\C$ is the \textit{shift functor} introduced by Church-Ellenberg-Farb \cite{CEF} for representations of $\FI$ and Nagpal \cite{Nag1} for representations of $\VI_q$. Unfortunately, it does not work in this much wider situation as the objects in $\D$ in general do not occur in a periodic way, which is essential for people to define the shift functor. However, using sheaf theory, one can deduce this theorem via an almost transparent way from those established results of $\C$.
\end{remark}

This paper is organized as follows. In Section 2 for the reader's convenience we provide some background knowledge on Grothendieck topologies and sheaf theory. In Section 3 we construct a torsion theory induced by a rule $J$ satisfying partial properties of Grothendieck topology. Indeed, for most results described in this section we do not require $J$ to be a Grothendieck topology, so the reader can understand the role of each individual property. The main content of Section 4 consists of a proof of Theorem \ref{Theorem 1} and several consequences. Directed categories are introduced in Section 5, where we also give a proof of Theorem \ref{Theorem 3}. In Section 6 we classify Grothendieck topologies on $\Co$ when $\C$ is an EI category of type $\N$, proving Theorem \ref{Theorem 5}. In the last section we describe some potential applications of previously established results and prove Theorem \ref{Theorem 6}.

Throughout this paper, unless otherwise specified, all functors are covariant, modules over rings are left modules, and composition of maps and morphisms is from right to left.

\section{Preliminaries}

Let $\C$ be a small skeletal category and let $\Co$ be its opposite category. In this section we briefly recall definitions and elementary facts on Grothendieck topologies and sheaves. For more details, please refer to \cite{Ar, AGV, Jo, KW, MM, Stack}.

\subsection{Grothendieck topologies}

Let $x$ be an object in $\C$. A \textit{left ideal} $S$ of morphisms on $x$ is a subfunctor of the representable functor $\C(x, -)$. Equivalently, $S$ is a set of morphisms with domain $x$ such that the following condition holds: if $f: x \to y$ is a member in $S$, and $g: y \to z$ is an arbitrary morphism in $\C$, then $gf = g \circ f$ is contained in $S$ as well. A left ideal of morphisms on $x$ is also called a \textit{sieve} on $x$ in $\Co$.

Throughout this paper we fix $J$ to be a rule assigning to each object $x$ a collection $J(x)$ of left ideals of morphisms on $x$. The following definition is taken from \cite[III.2, Definition 1]{MM}, which defines a \emph{Grothendieck topology} on $\Co$ if $J$ satisfies all three axioms. Here we use the dual version, since we mainly consider representations of $\C$ which are covariant functors, rather than presheaves which are contravariant functors.

\begin{definition} \label{def of Grothendieck topologies}
We say that $J$ satisfies
\begin{enumerate}
\item the \textit{maximal axiom} if $\C(x, -) \in J(x)$ for each $x \in \Ob(\C)$;

\item the \textit{stability axiom} if $S \in J(x)$ implies that the pullback
\[
f^{\ast} (S) = \{ g: y \to \bullet \mid gf \in S \} \in J(y)
\]
for any morphism $f: x \to y$ in $\C$;

\item the \textit{transitivity axiom} if $S \in J(x)$ and $T$ is a left ideal of morphisms on $x$ such that $f^{\ast}(T) \in J(y)$ for any $(f: x \to y) \in S$, then $T \in J(x)$.
\end{enumerate}
\end{definition}

The following fact is well known; see \cite[Section III.2]{MM}. For the convenience of the reader, we give a complete proof.

\begin{lemma}
If $J$ is a Grothendieck topology on $\Co$, then:
\begin{enumerate}
\item $J$ is closed under inclusions: if $S_1$ is a subfunctor of $\C(x, -)$, $S_2$ is a subfunctor of $S_1$ and $S_2 \in J(x)$, then $S_1 \in J(x)$ as well;
\item $J$ is closed under finite intersections: if $S_1, S_2 \in J(x)$, then $S_1 \cap S_2 \in J(x)$.
\end{enumerate}
\end{lemma}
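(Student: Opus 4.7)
The plan is to derive both closure properties from the three Grothendieck topology axioms (maximal, stability, transitivity), with the transitivity axiom doing the heavy lifting in each case. The key observation I would isolate first as a general remark is: if $f: x \to y$ already belongs to a sieve $T$ on $x$, then $f^{\ast}(T) = \C(y, -)$, since for every $g: y \to z$ the composite $gf$ lies in $T$ by the sieve (left-ideal) property. Combined with the maximal axiom, this says $f^{\ast}(T) \in J(y)$ whenever $f \in T$.

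For part (1), the plan is to apply the transitivity axiom with the covering sieve $S_2 \in J(x)$ and the candidate sieve $T := S_1$. For every $(f: x \to y) \in S_2$, since $S_2 \subseteq S_1$ we have $f \in S_1$, so by the observation above $f^{\ast}(S_1) = \C(y,-) \in J(y)$. Transitivity then yields $S_1 \in J(x)$.

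For part (2), one would like to show $S_1 \cap S_2 \in J(x)$. First check that $S_1 \cap S_2$ is indeed a sieve on $x$, which is immediate since the intersection of two subfunctors of $\C(x,-)$ is again a subfunctor. Then invoke transitivity with the covering sieve $S_1 \in J(x)$ and the candidate sieve $T := S_1 \cap S_2$: for each $(f: x \to y) \in S_1$, compute $f^{\ast}(S_1 \cap S_2) = f^{\ast}(S_1) \cap f^{\ast}(S_2)$, where $f^{\ast}(S_1) = \C(y,-)$ by the observation, while $f^{\ast}(S_2) \in J(y)$ by the stability axiom applied to $S_2 \in J(x)$. Hence $f^{\ast}(S_1 \cap S_2) = f^{\ast}(S_2) \in J(y)$, and transitivity gives $S_1 \cap S_2 \in J(x)$.

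There is no real obstacle here; the only subtlety worth flagging in the writeup is the identity $f^{\ast}(T) = \C(y,-)$ for $f \in T$, which is what lets transitivity be triggered with the maximal (or stability) axiom supplying the covering condition at each $f$. Both parts follow essentially immediately once that observation is made explicit.
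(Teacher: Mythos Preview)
Your proposal is correct and follows essentially the same approach as the paper: both parts are derived from the transitivity axiom, using the observation that $f^{\ast}(T) = \C(y,-)$ whenever $f \in T$. The only cosmetic difference is that in part~(2) the paper runs the transitivity argument over $S_2$ (obtaining $f^{\ast}(S_1 \cap S_2) = f^{\ast}(S_1) \in J(y)$ by stability), whereas you run it over $S_1$; by symmetry these are the same argument.
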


\begin{proof}
For every $f: x \to y$ in $S_2$, one has $f^{\ast} (S_1) = \C(y, -) \in J(y)$, so (1) follows from the transitivity axiom. Similarly, for every $f: x \to y$ in $S_2$, one can check that $f^{\ast} (S_1 \cap S_2) = f^{\ast} (S_1)$, so $f^{\ast} (S_1 \cap S_2)$ is contained in $J(y)$ by the stability axiom. Consequently, $S_1 \cap S_2 \in J(x)$ by the transitivity axiom.
\end{proof}

\begin{example}
We list a few important Grothendieck topologies on $\Co$ below:
\begin{itemize}
\item the \textit{trivial topology} $J_t$: $J_t(x) = \{\C(x, -) \}$ for every $x \in \Ob(\C)$;

\item the \textit{maximal topology} $J_m$: $J_m(x)$ consists of all subfunctors (including $\emptyset$) of $\C(x, -)$;

\item the \textit{dense topology} $J_d$: a subfunctor $S \subseteq \C(x, -)$ is contained in $J_d(x)$ if for every morphism $f: x \to y$, there exists a morphism $g: y \to z$ such that $gf \in S$;

\item the \textit{atomic Grothendieck topology} $J_a$: $J_a(x)$ consists of all nonempty subfunctors of $\C(x, -)$ provided that $\C$ satisfies the following Ore condition: given two morphisms $f: x \to y$ and $g: x \to z$, there exist morphisms $f': y \to w$ and $g': z \to w$ such that $f'f = g'g$.
\end{itemize}
\end{example}

Note that the empty subfunctor $\emptyset$ can never belong to $J_d(x)$ for $x \in \Ob(\C)$. Furthermore, if $\C$ satisfies the right Ore condition, then $J_d = J_a$.

\subsection{Sheaf theory}

Fix a Grothendieck topology $J$ on $\Co$ and let $\boldsymbol{\C}^{\op} = (\Co, J)$ be the corresponded site. The following definitions are taken from \cite[III.4]{MM}.

\begin{definition} \label{def of sheaves}
Let $\Sets$ be the category of sets and maps.
\begin{itemize}
\item A \textit{presheaf of sets} on $\Co$ is a covariant functor $F: \C \to \Sets$.

\item Given $S \in J(x)$, a \textit{matching family} for $S$ of elements of $F$ is a rule assigning to each morphism $f: x \to y$ in $S$ an element $v_f \in F(y)$ such that $g \cdot v_f = v_{gf}$ for any morphism $g: y \to \bullet$, where the action of $g$ on $v_f$ is determined by the covariant functor $F$.

\item An \textit{amalgamation} of the above matching family is an element $v_x \in F(x)$ such that $f \cdot v_x = v_f$ for all $f \in S$.
\end{itemize}
\end{definition}

More formally, let $\mathscr{S}$ be the category whose objects are elements in $S$ and morphisms are dotted arrows making the following diagram commutes
\[
\xymatrix{
x \ar[rr]^-{f \in S} \ar[dr]_-{g \in S} & & y\\
 & z \ar@{-->}[ur]_h
}
\]
Let $\mathscr{S}_{\ast}$ be the larger category obtained by adding the identity morphism $1_x$ on $x$ to $\mathscr{S}$. One can check that a matching family is a functor $M$ from $\mathscr{S}$ to the category of singleton sets such that $M(f) \subseteq (F \circ \cod)(f)$, where $\cod$ is the codomain functor, and an amalgamation of this matching family is an extension of $M$ from $\mathscr{S}$ to $\mathscr{S}_{\ast}$.

\begin{definition} \label{sheaf def}
A presheaf $F$ of sets on $\Co$ is called \textit{separated} if every matching family for every object $x$ and every $S \in J(x)$ has at most one amalgamation. It is a \textit{sheaf of sets} on the site $\boldsymbol{\C}^{\op} = (\Co, J)$ if every matching family has exactly one amalgamation.
\end{definition}

\begin{remark} \label{equivalent definition}
A more concise version of this definition is: $F$ is a sheaf of sets on the site $\boldsymbol{\C}^{\op}$ if and only if for every $x \in \Ob(\C)$ and every $S \in J(x)$, the inclusion $S \subseteq \C(x, -)$ induces an isomorphism
\[
\mathrm{Nat}(\C(x, -), F) \cong \mathrm{Nat}(S, F)
\]
where $\mathrm{Nat}(\bullet, F)$ means the set of natural transformations from $\bullet$ to $F$.
\end{remark}

When $J$ is the trivial topology, every presheaf of sets on $\Co$ is a sheaf of sets. For a general Grothendieck topology $J$ on $\Co$, the category $\Sh(\BC^{\op})$ of sheaves of sets on $\BC^{\op}$ is a full subcategory of $\C \lsets$, the category of all presheaves of sets on $\Co$. The inclusion functor $\iota: \Sh(\BC^{\op}) \to \C \lsets$ has a left adjoint $\sharp: \C \lsets \to \Sh(\BC^{\op})$, called the \textit{sheafification} functor. It is well known that the sheafification is a special type of localization; see \cite[Proposition 1.3]{GZ} or \cite{KS}. Therefore, $\Sh(\BC^{\op})$ is a localization of $\C \lsets$ by the multiplicative system of local isomorphisms \cite{KS} (or morphismes bicouvrants \cite{AGV}).

Now we give definitions of ringed sites and sheaves of modules on ringed sites. For more details, please refer to \cite[Chapter 18]{Stack}.

\begin{definition}
Let $\BC^{\op} = (\Co, J)$ be a site.
\begin{enumerate}
\item A \textit{presheaf of rings} on $\BC^{\op}$ is a covariant functor $\mathcal{O}: \C \to \mathrm{Ring}$, the category of associative rings, and it is called a \textit{sheaf of rings} if the underlying presheaf of sets is a sheaf of sets.

\item If $\mathcal{O}$ is a sheaf of rings on $\BC^{\op}$, then the pair $(\BC^{\op}, \mathcal{O})$ is called a \textit{ringed site}, and $\mathcal{O}$ is called the \textit{structure sheaf}.

\item A \textit{presheaf of modules} on $(\BC^{\op}, \mathcal{O})$ is covariant functor $V: \C \to \mathrm{Ab}$, the category of abelian groups, satisfying the following conditions: the value $V_x$ of $V$ on $x$ is an $\mathcal{O}_x$-module for each $x \in \Ob(\C)$, and $V_f: V_x \to V_y$ is an $\mathcal{O}_x$-linear map for each morphism $f: x \to y$ in $\C$, where $V_y$ is viewed as an $\mathcal{O}_x$-module by the ring homomorphism $\mathcal{O}_f: \mathcal{O}_x \to \mathcal{O}_y$. A presheaf of modules is called a \textit{sheaf of modules} if the underlying presheaf of sets is a sheaf of sets.
\end{enumerate}
\end{definition}

In this paper we also call a \textit{presheaf of modules} on $(\BC^{\op}, \mathcal{O})$ an $\mathcal{O}$-module. Denote the category of all $\mathcal{O}$-modules by $\mathcal{O} \Mod$ and the category of sheaves of modules on $(\BC^{\op}, \mathcal{O})$ by $\Sh(\BC^{\op}, \mathcal{O})$. Note that $\Sh(\BC^{\op}, \mathcal{O})$ is an abelian category: kernels are computed in $\mathcal{O} \Mod$; to compute cokernels, we first obtain cokernels in $\mathcal{O} \Mod$, and then apply the sheafification functor $\sharp$. Furthermore, $\Sh(\BC^{\op}, \mathcal{O})$ is a Grothendieck category, and hence every object in it has an injective hull. But in general it does not have enough projective objects.

\section{$J$-torsion theory}

In this section we develop a torsion theory induced by rules $J$ satisfying certain conditions. As before, let $\C$ be a small skeletal category, and let $\mathcal{O}$ be a structure presheaf, namely a presheaf of associative rings on $\Co$.\footnote{The reason we consider structure presheaves rather than structure sheaves is that $J$ might not be a Grothendieck topology, and in that case structure sheaves do not make sense.} We study $J$-torsion $\mathcal{O}$-modules, and describe a bijective correspondence between Grothendieck topologies on $\Co$ and certain special hereditary torsion pairs on $\mathcal{O} \Mod$. We also note that the results in this section parallel the torsion theory of module categories induced by Gabriel topologies on associative rings; for details, please see \cite{Baz} or \cite[Section 4]{BL}.

\subsection{$J$-torsion modules}

The main content of this subsection includes elementary properties of $J$-torsion $\mathcal{O}$-modules. Let $V$ be an $\mathcal{O}$-module. We define $J$-torsion elements of $V$ as follows.

\begin{definition}
For $x \in \Ob(\C)$, an element $v \in V_x$ is called \textit{$J$-torsion} if there is a certain $S \in J(x)$ such that $S \cdot v = 0$; that is, $f \cdot v = V_f(v) = 0$ for each $f \in S$.
\end{definition}

Given $S \in J(x)$, we set
\[
\mathcal{T}_S(V) = \{ v \in V_x \mid S \cdot v = 0 \},
\]
which is an $\mathcal{O}_x$-submodule of $V_x$. In particular, if $\emptyset \in J(x)$, then $\mathcal{T}_{\emptyset} (V) = V_x$ by convention, so every $v \in V_x$ is trivially $J$-torsion in this case.

Now we define
\[
\mathcal{T}_J(V) = \bigoplus_{x \in \Ob(\C)} \sum_{S \in J(x)} \mathcal{T}_S(V).
\]
At this moment it is not clear that $\mathcal{T}_J(V)$ is an $\mathcal{O}$-submodule of $V$. However, when $J$ satisfies the stability axiom, it is indeed the case.

\begin{lemma} \label{torsion submodules}
Notation as above and suppose that $J$ satisfies the stability axiom. Then:
\begin{enumerate}
\item $\mathcal{T}_J(V)$ is an $\mathcal{O}$-submodule of $V$;

\item the rule $V \to \mathcal{T}_J(V)$ is a left exact functor from $\mathcal{O} \Mod$ to itself;

\item if $J$ is closed under finite intersections, then every element in $(\mathcal{T}_J(V))_x$ is $J$-torsion for $x \in \Ob(\C)$.
\end{enumerate}
\end{lemma}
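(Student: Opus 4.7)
I would prove the three parts in sequence: Part (1) establishes the module structure, Part (2) promotes $\mathcal{T}_J$ to a left exact endofunctor, and Part (3) refines the description to a single-sieve characterization under the extra hypothesis.

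For Part (1), at each $x$ the piece $(\mathcal{T}_J(V))_x = \sum_{S \in J(x)} \mathcal{T}_S(V)$ is automatically an $\mathcal{O}_x$-submodule of $V_x$, since each summand is. The substantive content is closure under the transition maps $V_f$: given $v \in \mathcal{T}_S(V)$ with $S \in J(x)$ and $f \colon x \to y$, I would apply the stability axiom to obtain $f^{\ast}(S) \in J(y)$ and check directly that $g \cdot V_f(v) = V_{gf}(v) = 0$ for every $g \in f^{\ast}(S)$, since $gf \in S$. Hence $V_f(v) \in \mathcal{T}_{f^{\ast}(S)}(V) \subseteq (\mathcal{T}_J(V))_y$.

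For Part (2), functoriality is formal: any $\mathcal{O}$-module map $\phi \colon V \to W$ satisfies $\phi_x(\mathcal{T}_S(V)) \subseteq \mathcal{T}_S(W)$ for every $S \in J(x)$ because $\phi$ commutes with the $\C$-action, and summing produces the induced restriction $\mathcal{T}_J(V) \to \mathcal{T}_J(W)$. For left exactness, starting from a short exact sequence $0 \to V \to W \to U \to 0$, I would identify $V$ with a submodule of $W$ and verify $(\mathcal{T}_J(V))_x = V_x \cap (\mathcal{T}_J(W))_x$ at each $x$. The forward inclusion is automatic from functoriality; for the reverse, I would show that any decomposition $v = w_1 + \cdots + w_n$ with $w_i \in \mathcal{T}_{S_i}(W)$ and $S_i \in J(x)$ produces a common refinement sieve $T \in J(x)$ with $T \cdot v = 0$, placing $v$ in $\mathcal{T}_T(V) \subseteq (\mathcal{T}_J(V))_x$ since the annihilation identity descends from $W$ to $V$ under the inclusion.

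For Part (3), I would induct on the number of summands using the same intersection idea: with closure under finite intersections available, $T := S_1 \cap \cdots \cap S_n$ remains in $J(x)$ and annihilates $v_1 + \cdots + v_n$, so the sum lies in the single submodule $\mathcal{T}_T(V)$. The main obstacle I anticipate is the merging of annihilating sieves needed for the reverse inclusion in Part (2): the sum definition forces one to combine finitely many sieves into one, and only closure under finite intersections makes this legitimate. I would therefore expect the clean exposition to either establish Part (3) first and invoke it inside Part (2), or to rely on the preceding lemma, which furnishes closure under finite intersections for any Grothendieck topology satisfying both stability and transitivity.
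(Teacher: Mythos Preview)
Your treatment of parts (1) and (3) is essentially identical to the paper's. The interesting divergence is in part (2).

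The paper's proof of (2) establishes \emph{only} functoriality: it checks that a morphism $\phi\colon V\to W$ restricts to $\mathcal{T}_J(V)\to\mathcal{T}_J(W)$ by showing $\phi_x$ carries $J$-torsion elements to $J$-torsion elements. Left exactness is asserted in the statement but never argued. You go further and try to verify the identity $(\mathcal{T}_J(V))_x = V_x\cap(\mathcal{T}_J(W))_x$, and you correctly isolate the obstruction: an element $v\in V_x$ lying in $(\mathcal{T}_J(W))_x$ is only a \emph{sum} $\sum w_i$ of $J$-torsion elements of $W_x$, and the individual $w_i$ need not lie in $V_x$. Collapsing this to a single annihilating sieve $T\in J(x)$ with $T\cdot v=0$ is exactly what closure under finite intersections provides, and without it the reverse inclusion can genuinely fail.

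So your diagnosis is sharper than the paper's own proof. The two workarounds you propose, however, both import a hypothesis that part (2) as stated does not have: invoking (3) requires finite intersections directly, and invoking the preceding lemma requires transitivity as well as stability. Under the bare stability hypothesis of (2), left exactness is not available by your argument (nor by any other that I can see). In the paper's applications this causes no trouble, since $J$ is always a Grothendieck topology when left exactness is actually used; but as written, the statement of (2) overshoots what its proof delivers, and you have put your finger on exactly why.
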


\begin{proof}
(1): It suffices to check the following condition: for each morphism $f: x \to y$ in $\C$,
\[
v \in \sum_{S \in J(x)} \mathcal{T}_S(V) \Rightarrow f \cdot v \in \sum_{S \in J(y)} \mathcal{T}_S(V).
\]

Write $v = v_1 + v_2 + \ldots + v_s$ such that each $v_i$ is a $J$-torsion element, meaning that there is a certain $S_i \in J(x)$ (the special case $S_i = \emptyset$ is allowed) satisfying $S_i \cdot v_i = 0$ for $1 \leqslant i \leqslant s$. Clearly, $f^{\ast}(S_i) \cdot (f \cdot v_i) = 0$. But $f^{\ast} (S_i) \in J(y)$ by the stability axiom, so
\[
f \cdot v_i \in \mathcal{T}_{f^{\ast} (S_i)} (V) \subseteq \sum_{S \in J(y)} \mathcal{T}_S(V)
\]
for each $i$. Consequently, $f \cdot v$ is contained in $\sum_{S \in J(y)} \mathcal{T}_S(V)$ as desired.

(2): We want to show that $\mathcal{T}_J$ sends a morphism $\phi: V \to W$ between two $\mathcal{O}$-modules to a morphism $\mathcal{T}_J(V) \to \mathcal{T}_J(W)$ via restriction. It suffices to explain that $\phi_x$ sends $J$-torsion elements in $V_x$ to $J$-torsion elements in $W_x$ for $x \in \Ob(\C)$. Let $v \in V_x$ be a $J$-torsion element. If $\emptyset \in J(x)$, then every element in $W_x$ is $J$-torsion, and the conclusion holds trivially. Otherwise, there is a nonempty $S \in J(x)$ such that $f \cdot v = 0$ for every $(f: x \to y) \in S$. Consequently, $f \cdot \phi_x(v) = \phi_y (f \cdot v) = 0$, so $\phi_x(v)$ is $J$-torsion as well.

(3): Take an arbitrary $v \in (\mathcal{T}_J(V))_x$ and write it as $v_1 + \ldots + v_s$ such that $v_i \in \mathcal{T}_{S_i}(V)$ for a certain $S_i \in J(x)$. Let $S = S_1 \cap \ldots \cap S_s$ which is contained in $J(x)$ since $J$ is closed under finite intersections. Then $v \in \mathcal{T}_S(V)$, and hence is a $J$-torsion element.
\end{proof}

\textbf{Setup: In the rest of this paper we always assume that $J$ satisfies the stability axiom.}

\begin{definition}
We call $\mathcal{T}_J(V)$ the \textit{$J$-torsion submodule} of $V$. We say that $V$ is
\begin{itemize}
\item \textit{$J$-torsion} if $\mathcal{T}_J(V) = V$;

\item \textit{$J$-torsion free} if $\mathcal{T}_J(V) = 0$;

\item \textit{$J$-saturated} if $\mathrm{R}^i \mathcal{T}_J(V) = 0$ for $i \leqslant 1$, where $\mathrm{R}^i\mathcal{T}_J$ is the $i$-th right derived functor of $\mathcal{T}_J$.
\end{itemize}
\end{definition}

Denote by $\mathcal{T}(J)$ (resp., $\mathcal{F}(J)$) the category of $J$-torsion modules (resp., $J$-torsion free modules). It is clear from definitions that $\mathcal{T}(J)$ is closed under taking quotients: if $V$ lies in $\mathcal{T}(J)$ and $W$ is a submodule of $V$ in $\mathcal{O} \Mod$, then $V/W$ lies in $\mathcal{T} (J)$ as well. Dually, $\mathcal{F}(J)$ is closed under taking submodules.

Given a rule $J$, we define its \textit{inclusion closure} $\tilde{J}$ as follows: for every $x \in \Ob(\C)$,
\[
\tilde{J} (x) = \{ S \subseteq \C(x, -) \mid \, \exists \, T \in J(x) \text{ such that } T \subseteq S \}.
\]
Clearly, $J$ is closed under inclusions if and only if $\tilde{J} = J$.

\begin{lemma} \label{inclusion closure}
The inclusion closure $\tilde{J}$ satisfies the stability axiom, and $\mathcal{T}(J) = \mathcal{T}(\tilde{J})$.
\end{lemma}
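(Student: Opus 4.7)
The plan is to verify the two assertions separately, both by unwinding the definition of $\tilde{J}$ and exploiting the fact that passing to supersets is compatible with pullback of sieves (and reverses inclusion at the level of the sets $\mathcal{T}_S(V)$).

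For the stability axiom, I would take any $S \in \tilde{J}(x)$ and any morphism $f: x \to y$, and choose a witness $T \in J(x)$ with $T \subseteq S$. The key point is the set-theoretic observation that pullback is monotone: if $g \in f^{\ast}(T)$, then $gf \in T \subseteq S$, so $g \in f^{\ast}(S)$, giving $f^{\ast}(T) \subseteq f^{\ast}(S)$. Since $J$ satisfies the stability axiom, $f^{\ast}(T) \in J(y)$, and this inclusion then certifies $f^{\ast}(S) \in \tilde{J}(y)$ directly from the definition of the inclusion closure.

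For the equality $\mathcal{T}(J) = \mathcal{T}(\tilde{J})$, I would prove the stronger functorial statement $\mathcal{T}_J(V) = \mathcal{T}_{\tilde{J}}(V)$ for every $\mathcal{O}$-module $V$, from which the equality of the torsion classes is immediate. The inclusion $\mathcal{T}_J(V) \subseteq \mathcal{T}_{\tilde{J}}(V)$ is trivial because $J(x) \subseteq \tilde{J}(x)$ for every $x$. For the reverse inclusion, the important (and opposite) monotonicity principle is that $S_1 \subseteq S_2$ implies $\mathcal{T}_{S_2}(V) \subseteq \mathcal{T}_{S_1}(V)$, since a larger sieve annihilating $v$ is a stronger condition. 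Hence for any $S \in \tilde{J}(x)$ with witness $T \in J(x)$, $T \subseteq S$, one gets $\mathcal{T}_S(V) \subseteq \mathcal{T}_T(V) \subseteq \sum_{T' \in J(x)} \mathcal{T}_{T'}(V)$, so summing over $S$ yields $\sum_{S \in \tilde{J}(x)} \mathcal{T}_S(V) \subseteq \sum_{T \in J(x)} \mathcal{T}_T(V)$ on each object $x$, and taking the direct sum over $\Ob(\C)$ gives the required reverse inclusion.

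There is really no substantive obstacle here; the argument is pure bookkeeping once one notices that pullback and $\mathcal{T}_{(-)}$ behave oppositely with respect to inclusion of sieves. The only minor subtlety worth a word of caution is to separately cover the case $T = \emptyset$, where $\mathcal{T}_{\emptyset}(V) = V_x$ by convention, but this is consistent with the monotonicity $\mathcal{T}_S(V) \subseteq \mathcal{T}_\emptyset(V) = V_x$ and therefore requires no special treatment.
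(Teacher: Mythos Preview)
Your proof is correct and follows essentially the same route as the paper: the stability argument is identical, and for $\mathcal{T}(J) = \mathcal{T}(\tilde{J})$ the paper phrases the key step as ``$J$-torsion elements coincide with $\tilde{J}$-torsion elements'' while you equivalently compare the defining sums $\sum_{S} \mathcal{T}_S(V)$ directly, using the same monotonicity $T \subseteq S \Rightarrow \mathcal{T}_S(V) \subseteq \mathcal{T}_T(V)$.
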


\begin{proof}
For the first statement, we take an arbitrary $x \in \Ob(\C)$, an arbitrary $S \in \tilde{J}(x)$, and an arbitrary morphism $f: x \to y$ in $\C$. By definition, there is a certain $T \in J(x)$ such that $T \subseteq S$. Then one has $f^{\ast} (S) \supseteq f^{\ast} (T)$. But $f^{\ast} (T) \in J(y)$ since $J$ satisfies the stability axiom. By the definition of $\tilde{J}$, one shall have $f^{\ast}(S) \in \tilde{J}(y)$, so $\tilde{J}$ also satisfies the stability axiom.

To establish the second statement, one notes that $J$-torsion modules are generated by $J$-torsion elements. Therefore, it suffices to show that $J$-torsion elements coincide with $\tilde{J}$-torsion elements for any $\mathcal{O}$-module $V$. Since every $S \in J(x)$ is clearly contained in $\tilde{J}(x)$ for $x \in \Ob(\C)$, $J$-torsion elements are clearly $\tilde{J}$-torsion elements. On the other hand, if an element $v$ in a certain $V_x$ is a $\tilde{J}$-torsion element, then one can find some $S \in \tilde{J}(x)$ such that $S \cdot v = 0$. But by the definition of $\tilde{J}$, there exists some $T \in J(x)$ such that $T \subseteq S$, and in particular $T \cdot v = 0$, so $v$ is also $J$-torsion.
\end{proof}

\begin{remark}
By this lemma, when considering $J$-torsion modules, without loss of generality we can always assume that $J$ is closed under inclusions.
\end{remark}

When $\mathcal{O}$ is a constant structure sheaf given by a ring which is still denoted by $\mathcal{O}$, for $S \in J(x)$, we have a short exact sequence of $\mathcal{O}$-modules
\[
0 \to \underline{S} \to P(x) \to P(x)/\underline{S} \to 0
\]
where $P(x)$ and $\underline{S}$ are the $\mathcal{O}$-linearizations of $\C(x, -)$ and $S$, respectively. We need to generalize this construction to all structure sheaves as the quotient $\mathcal{O}$-module in the above sequence will play a key role for us to prove many results.

Denote by $\C_x$ the subcategory with the single object $x$ and the identity morphism $1_x$. The inclusion functor $\iota_x: \C_x \to \C$ induces a restriction functor $\res: \mathcal{O} \Mod \to \mathcal{O}_x \Mod$ and its left adjoint $\ind: \mathcal{O}_x \Mod \to \mathcal{O} \Mod$ is given by the left Kan extension. Explicitly, given an $\mathcal{O}_x$-module $W$ as well as an object $y$ in $\C$, one has
\[
(\ind (W))_y = \varinjlim_{f: \, x \to y} (\mathcal{O}_y \otimes_{\mathcal{O}_x} W) = \bigoplus_{f: \, x \to y} (\mathcal{O}_y \otimes_{\mathcal{O}_x} W),
\]
where $\mathcal{O}_y$ is regarded as a right $\mathcal{O}_x$-module for each $f$ via the ring homomorphism $\mathcal{O}_f: \mathcal{O}_x \to \mathcal{O}_y$. To deduce the second equility, we note that the colimit is taken over the category whose objects are morphisms $f: x \to y$ in $\C$ and whose morphisms are morphisms $h$ in $\C_x$ making the following diagram commutes
\[
\xymatrix{
x \ar[rr]^-{f} \ar@{-->}[dr]_-{h} & & y.\\
 & x \ar[ur]_{f'}
}
\]
It is easy to see that this is a discrete category since $\C_x$ only has the single morphism $1_x$, and consequently the colimit is actually a coproduct.

Now let $P(x) = \ind (\mathcal{O}_x)$, and $\underline{S}$ be the submodule such that
\[
\underline{S}_y = \bigoplus_{(f: \, x \to y) \in S} (\mathcal{O}_y \otimes_{\mathcal{O}_x} \mathcal{O}_x) \cong \bigoplus_{(f: \, x \to y) \in S} \mathcal{O}_{y, f} \subseteq \bigoplus_{f \in \C(x, y)} \mathcal{O}_{y, f} = P(x)_y
\]
where $\mathcal{O}_{y, f} = \mathcal{O}_y$ for each $f$. It is easy to see that $P(x)/\underline{S} = 0$ if and only if $S = \C(x, -)$. In the case that $S \neq \C(x, -)$, the value $(P(x) / \underline{S})_x$ of $P(x)/\underline{S}$ on $x$ is a nonzero free  $\mathcal{O}_x$-module since it contains a direct summand $\mathcal{O}_{x, 1_x}$. Denote the identity in this direct summand by $\boldsymbol{1}_x$.

\begin{lemma} \label{correspondence}
Given two rules $J$ and $K$, $\mathcal{T}(K)$ is a full subcategory of $\mathcal{T}(J)$ if and only if $K \subseteq \tilde{J}$.
\end{lemma}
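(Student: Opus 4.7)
The plan is to use the test module $V=P(x)/\underline{S}$ introduced just above the statement, together with Lemma~\ref{inclusion closure}. The if direction is essentially bookkeeping: given $V\in\mathcal{T}(K)$ and an element $v\in V_y$, decompose $v=\sum_i v_i$ with each $v_i$ annihilated by some $S_i\in K(y)$; the hypothesis $K\subseteq\tilde{J}$ supplies a sieve $T_i\in J(y)$ with $T_i\subseteq S_i$, which also kills $v_i$, so each $v_i$ is a $J$-torsion element and $v\in\mathcal{T}_J(V)_y$. Equivalently, one may observe that $K$-torsion elements are $\tilde{J}$-torsion elements, and by the argument in the proof of Lemma~\ref{inclusion closure} these coincide with $J$-torsion elements.

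For the only if direction, fix $S\in K(x)$ and aim to show $S\in\tilde{J}(x)$. I would dispose of the degenerate case $S=\C(x,-)$ separately (the test module below becomes zero; here $S\in\tilde{J}(x)$ as soon as $J(x)\neq\emptyset$, which is the only relevant situation). Assuming $S\neq\C(x,-)$, I would set $V=P(x)/\underline{S}$ and first verify $V\in\mathcal{T}(K)$: every element of $V_y$ is an $\mathcal{O}_y$-linear combination of the cosets $1_{y,f}=f\cdot\boldsymbol{1}_x$ for $f\in\C(x,y)$, and by stability of $K$ each such coset is annihilated by $f^{\ast}(S)\in K(y)$, hence is $K$-torsion. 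Therefore $V\in\mathcal{T}(K)\subseteq\mathcal{T}(J)$, and in particular $\boldsymbol{1}_x\in V_x$ lies in $\mathcal{T}_J(V)_x=\sum_{T\in J(x)}\mathcal{T}_T(V)$.

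Writing $\boldsymbol{1}_x=v_1+\cdots+v_s$ with $v_i\in\mathcal{T}_{T_i}(V)$ and $T_i\in J(x)$, any morphism $f$ in the intersection $T_1\cap\cdots\cap T_s$ satisfies $f\cdot\boldsymbol{1}_x=\sum_i f\cdot v_i=0$ in $V_y$; and by the very construction of $V$, the element $f\cdot\boldsymbol{1}_x$ equals $1_{y,f}$ modulo $\underline{S}_y$, which vanishes precisely when $f\in S$. Hence $T_1\cap\cdots\cap T_s\subseteq S$. The main obstacle will be to collapse this finite intersection into a single element of $\tilde{J}(x)$ contained in $S$: invoking closure of $J$ under finite intersections (the ambient setting here, and an automatic consequence of the Grothendieck topology axioms, which carries over to $\tilde{J}$ via Lemma~\ref{inclusion closure}), the intersection $T_1\cap\cdots\cap T_s$ itself lies in $\tilde{J}(x)$ and sits inside $S$, yielding $S\in\tilde{J}(x)$ as required.
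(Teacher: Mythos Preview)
Your ``if'' direction is fine and matches the paper. The gap is in the ``only if'' direction, at the very last step. You write $\boldsymbol{1}_x=v_1+\cdots+v_s$ with $v_i\in\mathcal{T}_{T_i}(V)$ and then collapse $T_1\cap\cdots\cap T_s$ to an element of $\tilde{J}(x)$ by ``invoking closure of $J$ under finite intersections (the ambient setting here, and an automatic consequence of the Grothendieck topology axioms)''. But that is \emph{not} the ambient setting: the standing hypothesis in this section (see the Setup after Lemma~\ref{torsion submodules}) is only that $J$ satisfies the stability axiom, and the lemma is stated for arbitrary such rules. Closure under finite intersections is genuinely extra; it is used as an explicit hypothesis in Lemma~\ref{torsion submodules}(3) and in Proposition~\ref{A recovers J}, precisely because it is not automatic. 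Nor does Lemma~\ref{inclusion closure} produce it for $\tilde{J}$: if $T_1,T_2\in J(x)$ are incomparable with $T_1\cap T_2$ not dominating any member of $J(x)$, then $T_1\cap T_2\notin\tilde{J}(x)$. So your final inference $T_1\cap\cdots\cap T_s\in\tilde{J}(x)$ is unjustified in the stated generality.

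The paper avoids this entirely by extracting a \emph{single} covering sieve rather than an intersection. Having written $\boldsymbol{1}_x=e_1+\cdots+e_r$ with each $e_i$ nonzero and $J$-torsion, it takes $S_1\in J(x)$ with $S_1\cdot e_1=0$ and argues, using the explicit direct-sum structure of $P(x)$ and the fact that $f$ carries the $\mathcal{O}_{x,1_x}$-component into $\mathcal{O}_{\bullet,f}$, that $f\cdot e_1=0$ in $P(x)/\underline{S}$ forces $f\in S$; hence $S_1\subseteq S$ and $S\in\tilde{J}(x)$ with no intersection needed. If you want to salvage your argument, you would need either to add the hypothesis that $J$ is closed under finite intersections (which weakens the lemma), or to imitate the paper's trick of exploiting the free summand $\mathcal{O}_{x,1_x}\subseteq (P(x)/\underline{S})_x$ to pin down one $S_i$ already contained in $S$.
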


\begin{proof}
If $K \subseteq \tilde{J}$, then by the definition $\mathcal{T}(K)$ is a full subcategory of $\mathcal{T}(\tilde{J})$, which coincides with $\mathcal{T}(J)$ by Lemma \ref{inclusion closure}.

Now we prove the only if direction. That is, for an arbitrary $x \in \Ob(\C)$ and $S \in K(x)$, we want to show that $S \in \tilde{J}(x)$. The conclusion holds trivially for $S = \C(x, -)$, so we assume that $S$ is not $\C(x, -)$. Note that a morphism $f: x \to y$ in $S$ induces a homomorphism of $\mathcal{O}_x$-modules from $\mathcal{O}_{x, 1_x} \subseteq P(x)_x$ to $\mathcal{O}_{y, f} \subseteq P(x)_y$. Passing to the quotient, we conclude that the identity $\boldsymbol{1}_x$ of $\mathcal{O}_{x, 1_x}$ in $(P(x) / \underline{S})_x$ is a $K$-torsion element since $f \cdot \boldsymbol{1}_x = 0$ for any $f \in S$. Thus $P(x) / \underline{S}$ is a $K$-torsion module since it is generated by $\boldsymbol{1}_x$ as an $\mathcal{O}$-module, so it is also a $J$-torsion module as $\mathcal{T}(K)$ is a full subcategory of $\mathcal{T}(J)$.

Although $\boldsymbol{1}_x$ might not be a $J$-torsion element, we can write $\boldsymbol{1}_x = e_1 + \ldots + e_r$ where each $e_i$ is a nonzero $J$-torsion element. In particular, for $i=1$, we can find a certain $S_1 \in J(x)$ such that $f \cdot e_1 = 0$ in $P(x) / \underline{S}$ for each $(f: x \to \bullet) \in S_1$. Since $f$ sends $e_1$ to $\boldsymbol{1}_{\bullet} \otimes e_1 \in \mathcal{O}_{\bullet, f} \subseteq P(x)_{\bullet}$, the image of $\boldsymbol{1}_{\bullet} \otimes e_1$ in the quotient $(P(x) / \underline{S})_{\bullet}$ is 0 if and only if $f \in S$. Thus we have $f \in S$, and hence $S_1 \subseteq S$. It follows that $S \in \tilde{J}(x)$ as desired.
\end{proof}

\begin{remark}
We shall point out that the above proof works for the special case that $S = \emptyset$. In this situation, $\underline{S}$ is the zero module, so $P(x) = P(x) / \underline{S}$, and $S_1 = \emptyset = S$.
\end{remark}

\begin{proposition} \label{bijective correspondence}
Let $J$ and $K$ be two rules. Then $\tilde{J} = \tilde{K}$ if and only if $\mathcal{T}(J) = \mathcal{T}(K)$. In particular, if $J$ and $K$ are Grothendieck topologies on $\Co$, then $J = K$ if and only if $\mathcal{T}(J) = \mathcal{T}(K)$.
\end{proposition}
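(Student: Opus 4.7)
The plan is to derive this proposition as a direct two-sided application of Lemma \ref{correspondence}, together with the trivial observation that the inclusion closure operation is idempotent.

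First I would rewrite the equality $\mathcal{T}(J) = \mathcal{T}(K)$ as the conjunction of the two inclusions $\mathcal{T}(K) \subseteq \mathcal{T}(J)$ and $\mathcal{T}(J) \subseteq \mathcal{T}(K)$. By Lemma \ref{correspondence}, the first is equivalent to $K \subseteq \tilde{J}$ and the second to $J \subseteq \tilde{K}$. So the statement to establish reduces to the purely combinatorial equivalence: $\tilde{J} = \tilde{K}$ if and only if $K \subseteq \tilde{J}$ and $J \subseteq \tilde{K}$.

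For this reduction, I would note that $\widetilde{\tilde{J}} = \tilde{J}$ (and likewise for $K$), which is immediate from the definition of the inclusion closure. If $\tilde{J} = \tilde{K}$, then $J \subseteq \tilde{J} = \tilde{K}$ and symmetrically $K \subseteq \tilde{K} = \tilde{J}$. Conversely, if $K \subseteq \tilde{J}$ and $J \subseteq \tilde{K}$, then passing to inclusion closures gives $\tilde{K} \subseteq \widetilde{\tilde{J}} = \tilde{J}$ and $\tilde{J} \subseteq \widetilde{\tilde{K}} = \tilde{K}$, hence $\tilde{J} = \tilde{K}$. This completes the main equivalence.

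For the ``in particular'' clause, I would invoke the lemma in Section 2 saying that any Grothendieck topology is closed under inclusions, so that $\tilde{J} = J$ and $\tilde{K} = K$ when $J, K$ are Grothendieck topologies on $\Co$; thus $\tilde{J} = \tilde{K}$ collapses to $J = K$. There is no real obstacle here: once Lemma \ref{correspondence} is in hand, the result is a formal manipulation, and the only thing worth verifying explicitly is the idempotence $\widetilde{\tilde{J}} = \tilde{J}$, which takes one line.
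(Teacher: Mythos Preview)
Your proposal is correct and takes essentially the same approach as the paper. The only cosmetic difference is that the paper invokes Lemma~\ref{inclusion closure} directly for the ``only if'' direction (since $\mathcal{T}(J) = \mathcal{T}(\tilde{J}) = \mathcal{T}(\tilde{K}) = \mathcal{T}(K)$), whereas you use the full biconditional of Lemma~\ref{correspondence} for both directions; the underlying content is identical.
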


\begin{proof}
The only if direction follows from Lemma \ref{inclusion closure}. For the other direction, note that by Lemma \ref{correspondence} one shall have $J \subseteq \tilde{K}$ and $K \subseteq \tilde{J}$, so $\tilde{J} \subseteq \tilde{K}$ and $\tilde{K} \subseteq \tilde{J}$.
\end{proof}

We have described a procedure to construct a full subcategory $\mathcal{T}(J)$ of $\mathcal{O} \Mod$ for a rule $J$. Now let us consider an inverse procedure.

\begin{definition}
Given an $\mathcal{O}$-module $V$, an object $x \in \Ob(\C)$, and an element $v \in V_x$, we define the \textit{annihilator} of $v$ to be
\[
\mathcal{A}(v) = \{ f: x \to \bullet \mid f \cdot v = 0 \},
\]
which is clearly a left ideal of morphisms on $x$. For a nonempty full subcategory $\mathfrak{A}$ of $\mathcal{O} \Mod$, we define $\mathcal{A} (\mathfrak{A})$ to be the rule assigning to each $x \in \Ob(\C)$ the collection
\[
\mathcal{A} (\mathfrak{A}) (x) = \{ \mathcal{A}(v) \mid v \in V_x, \, V \in \Ob(\mathfrak{A}) \}.
\]
\end{definition}

Clearly, $\mathcal{A} (\mathfrak{A})$ satisfies the maximal axiom. It also satisfies the stability axiom.

\begin{lemma}
The rule $\mathcal{A} (\mathfrak{A})$ satisfies the stability axiom.
\end{lemma}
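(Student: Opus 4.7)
The plan is to verify the stability axiom directly by exhibiting, for any given $S \in \mathcal{A}(\mathfrak{A})(x)$ and any $f\colon x \to y$ in $\C$, an explicit element of some $W \in \mathfrak{A}$ whose annihilator equals $f^\ast(S)$. Since $S$ already has the form $\mathcal{A}(v)$ for some $V \in \mathfrak{A}$ and $v \in V_x$, the natural candidate is $w = f \cdot v = V_f(v) \in V_y$, inside the \emph{same} module $V \in \mathfrak{A}$.

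The key computation is the identity $\mathcal{A}(f \cdot v) = f^\ast(\mathcal{A}(v))$, which I would establish by a straightforward unwinding of definitions: a morphism $g \colon y \to \bullet$ lies in $\mathcal{A}(f \cdot v)$ precisely when $g \cdot (f \cdot v) = 0$, which by functoriality of $V$ (composition is from right to left, as declared at the end of the introduction) is equivalent to $(gf) \cdot v = 0$, i.e. $gf \in \mathcal{A}(v) = S$, which is exactly the condition $g \in f^\ast(S)$. Consequently $f^\ast(S) = \mathcal{A}(w)$ belongs to $\mathcal{A}(\mathfrak{A})(y)$ by definition, establishing the stability axiom.

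There is no serious obstacle: the argument is a one-line application of the definitions, and in particular does not require $\mathfrak{A}$ to be closed under any construction, since the witness $w$ lives in the very same module $V$ used to witness $S$. The only thing to be slightly careful about is the convention on composition, so that the bookkeeping $g \cdot (f \cdot v) = (gf) \cdot v$ is correct; this is guaranteed by the paper's global convention.
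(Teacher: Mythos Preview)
Your argument is correct and is essentially the paper's proof: both exhibit $f^\ast(S)$ as $\mathcal{A}(f\cdot v)$ for the same witness $v$ with $S=\mathcal{A}(v)$, via the identity $g\cdot(f\cdot v)=(gf)\cdot v$. The only difference is cosmetic: the paper separates out the case $S=\emptyset$ (where the witness $v$ satisfies $h\cdot v\neq 0$ for all $h$), but the computation you give already covers that case uniformly, so your version is in fact slightly cleaner.
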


\begin{proof}
Take $S \in \mathcal{A} (\mathfrak{A})(x)$ and an arbitrary morphism $f: x \to y$. If $S \neq \emptyset$, then there is an $\mathcal{O}$-module $V$ in $\mathfrak{A}$ as well as an element $v \in V_x$ such that $S = \mathcal{A}(v)$. For $f \cdot v \in V_y$, we have
\[
f^{\ast} (S) = \{ g: y \to \bullet \mid gf \in S  \} = \{ g: y \to \bullet \mid g \cdot (f \cdot v) = 0 \} = \mathcal{A}(f \cdot v).
\]
Thus $f^{\ast} (S) \in \mathcal{A} (\mathfrak{A})(y)$. For $S = \emptyset$, we can find an element $v \in V_x$ such that $f \cdot v \neq 0$ for all $f \in \C(x, -)$. In particular, for a fixed $f: x \to y$, we have
\[
\emptyset = f^{\ast} (S) = \{g: y \to \bullet \mid g \cdot (f \cdot v) = (gf) \cdot v = 0 \} = \mathcal{A}(f \cdot v) \in \mathcal{A} (\mathfrak{A})(y),
\]
so the stability axiom also holds for this case.
\end{proof}

The following result can be viewed as an analogue of the well known correspondence between algebraic sets and radical ideals given by Hilbert's Nullstellensatz. In particular, $\mathcal{T}$ is the analogue of the operation taking algebraic sets, and $\mathcal{A}$ is the analogue of the operation taking ideals associated to algebraic sets. With respect to these analogues, the full subcategory of $J$-torsion $\mathcal{O}$-modules, where $J$ is closed under finite intersections, plays the role of an algebraic set, and the rule $J$ closed under finite intersections and inclusions plays the role of a radical ideal.

\begin{proposition} \label{A recovers J}
Given a rule $J$, one has:
\begin{enumerate}
\item $\mathcal{T} ( \mathcal{A} (\mathcal{T} (J))) = \mathcal{T}(J)$ if $J$ is closed under finite intersections;

\item $J = \mathcal{A}(\mathcal{T}(J))$ if and only if $J$ is closed under finite intersections and inclusions.
\end{enumerate}
\end{proposition}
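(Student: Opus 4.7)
My plan is to treat the two statements separately, relying on two tools already available: Lemma~\ref{torsion submodules}(3), which guarantees that every element of a $J$-torsion module is itself $J$-torsion whenever $J$ is closed under finite intersections; and the universal test modules $P(x)/\underline{S}$ constructed before Lemma~\ref{correspondence}, which realize any prescribed sieve $S$ as the annihilator $\mathcal{A}(\boldsymbol{1}_x)$ of a distinguished generator.

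For part (1), I would establish the two inclusions. The containment $\mathcal{T}(J) \subseteq \mathcal{T}(\mathcal{A}(\mathcal{T}(J)))$ is essentially tautological: for any $V \in \mathcal{T}(J)$ and $v \in V_x$, the sieve $\mathcal{A}(v)$ lies in $\mathcal{A}(\mathcal{T}(J))(x)$ by definition and annihilates $v$, so $v$ is $\mathcal{A}(\mathcal{T}(J))$-torsion. For the reverse inclusion, given $v \in V_x$ annihilated by some $S = \mathcal{A}(w) \in \mathcal{A}(\mathcal{T}(J))(x)$ with $w$ in a $J$-torsion module $W$, Lemma~\ref{torsion submodules}(3) supplies $T \in J(x)$ with $T \cdot w = 0$; hence $T \subseteq \mathcal{A}(w) = S$, and the same $T$ annihilates $v$, exhibiting $v$ as a $J$-torsion element.

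For part (2), the direction $(\Leftarrow)$ is the more structural one. The inclusion $J(x) \subseteq \mathcal{A}(\mathcal{T}(J))(x)$ follows by applying $P(x)/\underline{S}$ to any $S \in J(x)$: the module is generated by $\boldsymbol{1}_x$, and $\boldsymbol{1}_x$ is $J$-torsion because $S$ annihilates it, so the whole module lies in $\mathcal{T}(J)$ and $\mathcal{A}(\boldsymbol{1}_x) = S$ places $S$ inside $\mathcal{A}(\mathcal{T}(J))(x)$. The reverse inclusion is read off from the argument of part~(1)'s second inclusion, combined with closure of $J$ under inclusions to upgrade the produced $T \in J(x)$ with $T \subseteq S$ into the statement $S \in J(x)$.

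The direction $(\Rightarrow)$ requires extracting both closure properties from the hypothesis $J = \mathcal{A}(\mathcal{T}(J))$. Closure under inclusions is again established via the $P(x)/\underline{S}$ trick: if $T \subseteq S$ and $T \in J(x)$, then $\boldsymbol{1}_x \in P(x)/\underline{S}$ is annihilated by $T$ and hence $J$-torsion, forcing $P(x)/\underline{S} \in \mathcal{T}(J)$ and therefore $S = \mathcal{A}(\boldsymbol{1}_x) \in J(x)$. The anticipated main obstacle is closure under finite intersections, because the obvious test object $P(x)/\underline{S_1 \cap S_2}$ cannot be shown to lie in $\mathcal{T}(J)$ in an independent way---indeed, $S_1 \cap S_2 \in J(x)$ is precisely what we aim to prove, so using that module would be circular. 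The plan is to sidestep the circularity by passing to a direct sum: given $S_1, S_2 \in J(x)$, form $V = P(x)/\underline{S_1} \oplus P(x)/\underline{S_2}$, which is $J$-torsion summand-wise and hence $J$-torsion, and compute the annihilator of the diagonal element $(\boldsymbol{1}_x, \boldsymbol{1}_x) \in V_x$ to be exactly $S_1 \cap S_2$. This identifies $S_1 \cap S_2$ as a member of $\mathcal{A}(\mathcal{T}(J))(x) = J(x)$ and closes the argument.
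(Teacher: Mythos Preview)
Your proposal is correct and follows essentially the same approach as the paper: both rely on Lemma~\ref{torsion submodules}(3) to produce a $J$-sieve inside any annihilator arising from a $J$-torsion module, on the test modules $P(x)/\underline{S}$ to realize sieves as annihilators, and on direct sums to handle finite intersections. Your argument is slightly more self-contained in that you avoid invoking Lemma~\ref{correspondence} for the reverse inclusion in part~(2)($\Leftarrow$), and you treat closure under inclusions via the generator $\boldsymbol{1}_x$ being $J$-torsion rather than via the quotient map $P(x)/\underline{S} \twoheadrightarrow P(x)/\underline{T}$ as the paper does, but these are cosmetic differences.
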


\begin{proof}
For brevity denote $\mathcal{A} (\mathcal{T}(J))$ by $K$.

(1): For $x \in \Ob(\C)$ and $S \in J(x)$, consider the $\mathcal{O}$-module $P(x) / \underline{S}$ defined before Lemma \ref{correspondence}. It is a $J$-torsion module, and hence is contained in $\mathcal{T}(J)$. By the definition of $K$, we conclude that $S = \mathcal{A}(\boldsymbol{1}_x) \in K(x)$, so $J \subseteq K$. Consequently, $\mathcal{T}(J)$ is a full subcategory of $\mathcal{T}(K)$. Now we prove $\mathcal{T}(K)\subseteq \mathcal{T}(J)$ by showing that every $K$-torsion $\mathcal{O}$-module $V$ is also $J$-torsion. Since torsion modules are generated by torsion elements, it suffices to show the following claim: for $x \in \Ob(\C)$ and $v \in V_x$, if $v$ is $K$-torsion, then it is $J$-torsion.

Assuming that $v$ is $K$-torsion, we can find a certain $S \in K(x)$ such that $S \cdot v = 0$. By definition of $K$, we can find a $J$-torsion $\mathcal{O}$-module $W$ and an element $w \in W_x$ such that $S = \mathcal{A}(w)$. But $J$ is closed under finite intersections, so $w$ is a $J$-torsion element by Lemma \ref{torsion submodules}, and hence there is a certain $S' \in J(x)$ such that $S' \cdot w = 0$. Clearly, we shall have $S' \subseteq S = \mathcal{A}(w)$, so $S' \cdot v = 0$. Consequently, $v$ is a $J$-torsion element as well, and the claim is established.

(2): We have shown that $J \subseteq K$ in the proof of (1), so it suffices to prove $K \subseteq J$ to establish the if direction. The given assumption tells us that $J = \tilde{J}$. Thus by Lemma \ref{correspondence}, it suffices to show that $\mathcal{T} (K)$ is a full subcategory of $\mathcal{T}(J)$. But this is true by (1).

Now we turn to the only if direction. One first checks that $J = K$ is closed under finite intersections. For $x \in \Ob(\C)$ and $S, S' \in K(x)$, we can find objects $V$ and $V'$ in $\mathcal{T}(J)$ and elements $v \in V_x$ and $v' \in V'_x$ such that $S = \mathcal{A}(v)$ and $S' = \mathcal{A}(v')$. Note that $\mathcal{T}(J)$, though might not be closed under extensions, is closed under finite direct sums. Consequently, $V \oplus V'$ is contained in $\mathcal{T}(J)$. In particular, for $(v, v') \in V_x \oplus V_x'$, one has $S \cap S' = \mathcal{A}((v, v')) \in K(x)$, so $K$ is closed under finite intersections.

Finally we check that $J$ is closed under inclusions. Suppose that $S \subseteq T \subseteq \C(x, -)$ and $S \in J(x)$. Consider $\mathcal{O}$-modules $P(x) / \underline{S}$ and $P(x)/\underline{T}$. Note that $P(x) / \underline{S}$ is contained in $\mathcal{T}(J)$, $P(x)/\underline{T}$ is a quotient of $P(x)/\underline{S}$, and $\mathcal{T}(J)$ is closed under taking quotients. Consequently, $P(x)/\underline{T}$ is also contained in $\mathcal{T}(J)$. Thus $T = \mathcal{A} (\boldsymbol{1}_x) \in K(x) = J(x)$, so $J$ is closed under inclusions.
\end{proof}

The above proof shows that $\mathcal{A} (\mathcal{T}(J))$ is always closed under finite intersections.

\begin{remark}
We illustrate the above result from a categorical viewpoint. Let $\mathcal{J}$ be the set of rules $J$ satisfying the stability axiom. It is a poset under the inclusion. We can construct another category whose objects are full subcategories of $\mathcal{O} \Mod$ and morphisms are inclusion functors. Then $\mathcal{A}$ and $\mathcal{T}$ are covariant functors between these two categories. Restricting to the subposet consisting of $J$ closed under finite intersections, we have $\mathcal{T} \mathcal{A} \mathcal{T} = \mathcal{T}$; and restricting further to the subposet consisting of $J$ closed under finite intersections and inclusions, $\mathcal{A} \mathcal{T}$ is the identity functor.

It is not hard to see that $\mathcal{T}$ preserves finite intersections. Explicitly, given two rules $J$ and $K$ closed under inclusions and finite intersections, one has $\mathcal{T}(J) \cap \mathcal{T}(K) = \mathcal{T}(J \cap K)$, where $J \cap K$ is the rule assigning $J(x) \cap K(x)$ to $x \in \Ob(\C)$. Indeed, since $\mathcal{T}(J \cap K) \subseteq \mathcal{T}(J)$ and $\mathcal{T}(J \cap K) \subseteq \mathcal{T}(K)$, one has $\mathcal{T}(J) \cap \mathcal{T}(K) \supseteq \mathcal{T}(J \cap K)$. On the other hand, take $V$ in $\mathcal{T}(J) \cap \mathcal{T}(K)$. Then for $x \in \Ob(\C)$ and $v \in V_x$, one can find $S \in J(x)$ and $T \in K(x)$ such that $S \cdot v  = 0 = T \cdot v$. Then $S \cup T \in J(x) \cap K(x)$ and $(S \cup T) \cdot v = 0$, so $v$ is also $J \cap K$-torsion, and hence $V \in \mathcal{T}(J \cap K)$.
\end{remark}

\subsection{Torsion pairs induced by Grothendieck topologies}

In this subsection we show that Grothendieck topologies induce hereditary torsion pairs in $\mathcal{O} \Mod$, and are completely determined by the induced torsion pairs.

\begin{proposition} \label{torsion theory}

Given a rule $J$, one has:
\begin{enumerate}
\item $(\mathcal{T}(J), \, \mathcal{F}(J))$ is a hereditary torsion pair in $\mathcal{O} \Mod$ if $J$ is a Grothendieck topology.

\item Conversely, if $J$ is closed under inclusions and $(\mathcal{T}(J), \, \mathcal{F}(J))$ is a hereditary torsion pair in $\mathcal{O} \Mod$, then $J$ is a Grothendieck topology.
\end{enumerate}
\end{proposition}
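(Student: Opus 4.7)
For part (1), the plan is to verify the defining conditions of a hereditary torsion pair for $(\mathcal{T}(J), \mathcal{F}(J))$, using the canonical short exact sequence $0 \to \mathcal{T}_J(V) \to V \to V/\mathcal{T}_J(V) \to 0$ from Lemma \ref{torsion submodules} as the torsion decomposition. Closure of $\mathcal{T}(J)$ under quotients and coproducts is immediate from the pointwise description of $J$-torsion elements; the Hom-vanishing $\Hom_{\mathcal{O}}(T, F) = 0$ is formal since the image of such a morphism is simultaneously torsion (a quotient of $T$) and torsion free (a submodule of $F$); and the hereditary property follows from the left exactness in Lemma \ref{torsion submodules}(2), which gives $\mathcal{T}_J(W) = W \cap \mathcal{T}_J(V) = W$ for any submodule $W$ of a torsion module $V$.

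The two substantive ingredients are closure of $\mathcal{T}(J)$ under extensions and torsion-freeness of $V/\mathcal{T}_J(V)$, and both reduce to the same diagram chase using transitivity. For an extension $0 \to V' \to V \to V'' \to 0$ with $V', V'' \in \mathcal{T}(J)$ and $v \in V_x$, I would first use Lemma \ref{torsion submodules}(3) (available because Grothendieck topologies are closed under finite intersections) to pick a single $S \in J(x)$ annihilating the image of $v$ in $V''$, so that $f \cdot v \in V'_{\cod f}$ for each $f \in S$. Since $V'$ is torsion there is a further sieve $T_f \in J(\cod f)$ killing $f \cdot v$, and then the annihilator $T = \mathcal{A}(v)$ satisfies $f^{\ast}(T) \supseteq T_f$, hence belongs to $J(\cod f)$ by inclusion closure; transitivity then produces $T \in J(x)$, so $v$ is torsion. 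The argument for torsion-freeness of $V/\mathcal{T}_J(V)$ is the same diagram chase applied to a lift of a putative torsion element of the quotient.

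For part (2), stability is assumed, and the main task is to extract the transitivity axiom from the hereditary torsion pair hypothesis. Given $S \in J(x)$ and a left ideal $T$ on $x$ with $f^{\ast}(T) \in J(\cod f)$ for every $f \in S$, my test module is $V = P(x)/\underline{T}$, whose distinguished generator $\boldsymbol{1}_x$ has annihilator exactly $T$. Passing to the torsion-free quotient $W = V/\mathcal{T}_J(V) \in \mathcal{F}(J)$ supplied by the torsion pair, each $f \cdot \boldsymbol{1}_x \in V_{\cod f}$ with $f \in S$ has annihilator $f^{\ast}(T) \in J(\cod f)$, so it is $J$-torsion in $V$ and maps to $0$ in $W$. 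Therefore $S$ annihilates $\bar{\boldsymbol{1}}_x \in W_x$, turning it into a $J$-torsion element of the torsion-free module $W$; so $\bar{\boldsymbol{1}}_x = 0$, placing $\boldsymbol{1}_x \in \mathcal{T}_J(V)$, and some $T' \in J(x)$ must annihilate $\boldsymbol{1}_x$, giving $T' \subseteq T$. Inclusion closure then upgrades this to $T \in J(x)$.

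For the maximal axiom, one first obtains closure of $J$ under finite intersections by testing on the direct sum $(P(x)/\underline{S_1}) \oplus (P(x)/\underline{S_2})$: this direct sum lies in $\mathcal{T}(J)$, and by heredity the cyclic submodule generated by $(\boldsymbol{1}_x, \boldsymbol{1}_x)$---isomorphic to $P(x)/\underline{S_1 \cap S_2}$---also lies in $\mathcal{T}(J)$, from which (combining the transitivity just established with inclusion closure) one extracts a sieve in $J(x)$ contained in $S_1 \cap S_2$. Proposition \ref{A recovers J}(2) then identifies $J$ with $\mathcal{A}(\mathcal{T}(J))$, whose value at $x$ contains $\mathcal{A}(0) = \C(x, -)$ automatically, delivering the maximal axiom. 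The main obstacle throughout is the transitivity step in part (2), where the choice of $P(x)/\underline{T}$ as test module and the essential use of the torsion-free quotient together isolate $\boldsymbol{1}_x$ as a single torsion element; everything else is formal manipulation with annihilators and inclusion closure.
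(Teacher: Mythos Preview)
Your overall strategy matches the paper's closely: for (1) you use the same torsion decomposition and the same transitivity-based chase, and for (2) you use the same test module $P(x)/\underline{T}$ and the same passage to the torsion-free quotient. The one substantive gap is in the last step of your transitivity argument.

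You correctly reach $\bar{\boldsymbol{1}}_x = 0$, hence $\boldsymbol{1}_x \in \mathcal{T}_J(V)$, and then write ``some $T' \in J(x)$ must annihilate $\boldsymbol{1}_x$.'' But membership in $\mathcal{T}_J(V)$ only says $\boldsymbol{1}_x$ is a \emph{finite sum} of $J$-torsion elements; passing to a single covering sieve annihilating $\boldsymbol{1}_x$ is exactly Lemma~\ref{torsion submodules}(3), which requires closure under finite intersections---something you only establish \emph{after} transitivity, and whose proof you say relies on the transitivity just shown. So as written the argument is circular at this point. The paper avoids this by invoking the trick from the last paragraph of the proof of Lemma~\ref{correspondence}: once $P(x)/\underline{T}$ is known to lie in $\mathcal{T}(J)$, write $\boldsymbol{1}_x = e_1 + \cdots + e_r$ with each $e_i$ a nonzero $J$-torsion element, take $T_1 \in J(x)$ annihilating $e_1$, and use the explicit direct-sum description of $P(x)$ to see that any $f \in T_1$ sending $e_1$ to zero in the quotient must already lie in $T$; thus $T_1 \subseteq T$ and inclusion closure finishes. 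You need this single-summand argument (or an equivalent) in place of the unjustified sentence.

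A minor remark on the maximal axiom: the paper dispatches it in one line (``trivially as $J$ is closed under inclusions''), whereas you route through finite intersections and Proposition~\ref{A recovers J}. Your route is defensible and arguably more careful about the edge case $J(x)=\emptyset$, but once transitivity is in hand, finite intersections follows directly from stability plus transitivity by the standard argument ($f^\ast(S_1\cap S_2)=f^\ast(S_1)$ for $f\in S_2$), so the test-module detour for intersections is unnecessary.
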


\begin{proof}
(1): Firstly, $\Hom_{\mathcal{O} \Mod} (T, F) = 0$ for $T$ in $\mathcal{T}(J)$ and $F$ in $\mathcal{F}(J)$. Indeed, take any morphism $\phi: T \to F$. By (2) of Lemma \ref{torsion submodules}, $\phi$ restricts to a morphism
\[
\mathcal{T}_J(T) = T \longrightarrow \mathcal{T}_J(F) = 0,
\]
which is the zero morphism. Therefore, $\phi = 0$. Secondly, it is clear that $\mathcal{T}(J)$ is closed under taking quotient modules. It is also closed under taking submodules. Indeed, for any $V$ in $\mathcal{T}(J)$, $x \in \Ob(\C)$ and $v \in V_x$, $v$ is a $J$-torsion element by Lemma \ref{torsion submodules}. Consequently, for an arbitrary submodule $W \subseteq V$ and every $x \in \Ob(\C)$, all elements in $W_x$ are also $J$-torsion.

Clearly, only the zero module lies in the intersection of $\mathcal{T}(J)$ and $\mathcal{F}(J)$. To deduce the conclusion, it suffices to show that $V/\mathcal{T}_J(V)$ is $J$-torsion free for every $\mathcal{O}$-module $V$; that is, for $x \in \Ob(\C)$ and $\bar{v} \in (V/\mathcal{T}_J(V))_x$, if $\bar{v}$ is $J$-torsion, then $\bar{v} = 0$.

If $\emptyset \in J(x)$, then every $v \in V_x$ is $J$-torsion, so the conclusion holds trivially as $(V/\mathcal{T}_J(V))_x = 0$. Otherwise, take a preimage $v \in V_x$ of $\bar{v}$. Since $\bar{v}$ is $J$-torsion, there exists a nonempty $S \in J(x)$ such that $f \cdot v \in (\mathcal{T}_J(V))_{\bullet}$ for each $f: x \to \bullet$ in $S$. But $f \cdot v$ is $J$-torsion by Lemma \ref{torsion submodules}, so we can find a certain $S_f \in J(\bullet)$ such that $S_f \cdot (f \cdot v) = 0$. Define
\[
\hat{S} = \bigcup_{f \in S} S_f f = \bigcup_{f \in S} \{gf \mid g \in S_f \},
\]
which is clearly a subfunctor of $\C(x, -)$ and $\hat{S} \cdot v = 0$. Since for every $f \in S$,
\[
f^{\ast}(\hat{S}) \supseteq f^{\ast}(S_f f) \supseteq S_f \in J(\bullet),
\]
one has $f^{\ast} (\hat{S}) \in J(\bullet)$ because $J$ is closed under inclusions, and hence $\hat{S} \in J(x)$ by the transitivity axiom. Consequently, $v$ is $J$-torsion and is contained in $(\mathcal{T}_J(V))_x$, so $\bar{v} = 0$ as desired.

(2) The maximal axiom holds trivially as $J$ is closed under inclusions. Since we have assumed that $J$ always satisfies the stability axiom, it remains to check the transitivity axiom. Given $T \in J(x)$ and $S \subseteq \C(x, -)$ such that $g^{\ast}(S) \in J(y)$ for every $g: x \to y$ in $T$, we need to show that $S \in J(x)$. Of course, we can assume $S \neq \C(x, -)$.

The torsion pair gives a short exact sequence
\[
0 \to P' \to P(x) / \underline{S} \to P'' \to 0
\]
where $P(x) / \underline{S}$ is constructed before Lemma \ref{correspondence}, $P'$ is $J$-torsion and $P''$ is $J$-torsion free. Consider $\boldsymbol{1}_x \in \mathcal{O}_{x, 1_x} \subseteq (P(x) / \underline{S})_x$ which is specified before Lemma \ref{correspondence}. For every $g: x \to y$ in $T$, since $g^{\ast} (S) \in J(y)$ and $g^{\ast} (S) \cdot (g \cdot \boldsymbol{1}_x) = 0$, it follows that $g \cdot \boldsymbol{1}_x$ is $J$-torsion, and hence is contained in $P'_y$. Passing to the quotient module $P''$, we deduce that $T \cdot \bar{\boldsymbol{1}}_x = 0$, where $\bar{\boldsymbol{1}}_x$ is the image of $\boldsymbol{1}_x$ in $P_x''$. Thus $\bar{\boldsymbol{1}}_x$ is $J$-torsion. But $P''$ is torsion free, this forces $\bar{\boldsymbol{1}}_x = 0$, or equivalently $\boldsymbol{1}_x$ is contained in $P'_x$. But $P(x) / \underline{S}$ as an $\mathcal{O}$-module is generated by $\boldsymbol{1}_x$, this forces $P(x) / \underline{S} = P'$; that is, $P(x) / \underline{S}$ is a $J$-torsion module. Now we can apply the same argument as in the last paragraph of the proof of Lemma \ref{correspondence} to deduce $S \in J(x)$.
\end{proof}

\begin{remark}
If $(\mathcal{T}(J), \, \mathcal{F}(J))$ is a hereditary torsion pair, then $\mathcal{T}(J)$ is a Serre subcategory of $\mathcal{O} \Mod$, so we can form a Serre quotient $\mathcal{O} \Mod / \mathcal{T}(J)$. In the next section we will show that it is equivalent to $\Sh(\BC^{\op}, \mathcal{O})$ when $\mathcal{O}$ is a structure sheaf.
\end{remark}

\begin{remark}
The condition that $J$ is closed under inclusions in part (2) of the previous proposition cannot be dropped. For instance, take $J$ to be the maximal topology on $\Co$, and let $K$ be the rule such that $K(x) = \{ \emptyset, \, \C(x, -) \}$ for each $x \in \Ob(\C)$. It is easy to see that $K$ satisfies the stability axiom and the maximal axiom. Furthermore, $J$ and $K$ induce the same hereditary torsion pair on $\mathcal{O} \Mod$ with $\mathcal{T}(J) = \mathcal{T}(K) = \mathcal{O} \Mod$ and $\mathcal{F}(J) = \mathcal{F}(K) = \{ 0 \}$. But in general $K$ is not a Grothendieck topology on $\Co$.
\end{remark}

We end this section by an example illustrating our constructions.

\begin{example}
Let $\C$ be the free category of the following quiver
\[
\xymatrix{
x \ar@/^/[rr]^f \ar@/_/[rr]_g & & y.
}
\]
By a direct computation, we obtain all possible Grothendieck topologies on $\Co$, listed below:

\begin{center}
\begin{tabular}{c c c}
  Topologies & $J(x)$ & $J(y)$ \\
  \hline
  Trivial topology & $\{ \{1_x, \, f, \, g \} \}$ & $\{ \{1_y\} \}$ \\
  \hline
  Dense topology & $\{ \{1_x, \, f, \, g \}, \, \{f, \, g \} \}$ & $\{ \{1_y\} \}$ \\
  \hline
  Maximal topology & $\{ \{1_x, \, f, \, g \}, \{f, \, g \}, \, \{f\}, \, \{g\}, \, \emptyset \}$ & $\{ \{1_y\}, \, \emptyset \}$ \\
  \hline
  Topology IV & $\{ \{1_x, \, f, \, g \}\}$ & $\{ \{1_y\}, \, \emptyset \}$. \\
  \hline
\end{tabular}
\end{center}
The corresponded torsion pairs induced by these Grothendieck topologies are:
\begin{center}
\begin{tabular}{c c c}
  Topologies & $\mathcal{T}(J)$ & $\mathcal{F}(J)$ \\
  \hline
  Trivial topology & $0$ & $\mathcal{O} \Mod$ \\
  \hline
  Dense topology & $\{ V \in \mathcal{O} \Mod \mid V_y = 0 \}$ & $\{ V \in \mathcal{O} \Mod \mid \ker V_f \cap \ker V_g = 0 \}$ \\
  \hline
  Maximal topology & $\mathcal{O} \Mod$ & $0$ \\
  \hline
  Topology IV & $\{ V \in \mathcal{O} \Mod \mid V_x = 0 \}$ & $\{ V \in \mathcal{O} \Mod \mid V_y = 0 \}$. \\
  \hline
\end{tabular}
\end{center}
\end{example}

\smallskip

\section{A torsion theoretic characterization of sheaves}

In this section we interpret notions in sheaf theory in terms of the torsion theory developed in the previous section. We mention that all results have been proved in \cite{DLLX} for the special case that $J$ is the atomic Grothendieck topology and $\mathcal{O}$ is a constant structure sheaf. Furthermore, the strategy of proofs in that paper extend to the general framework in this paper with suitable modifications.

Throughout this subsection let $J$ be a Grothendieck topology on $\Co$ and let $\mathcal{O}$ be a structure sheaf on the site $(\Co, J)$. Recall that an $\mathcal{O}$-module $V$ is \textit{right perpendicular} to all $J$-torsion $\mathcal{O}$-modules in the sense of Geigle-Lenzing \cite{GLen} if $\Hom_{\mathcal{O} \Mod} (T, V) = 0 = \Ext_{\mathcal{O} \Mod}^1 (T, V)$ for each $J$-torsion module $T$.

\begin{lemma} \label{preliminary equivalences}
Let $V$ be an $\mathcal{O}$-module. The following statements are equivalent:
\begin{enumerate}
\item $V$ is a $J$-torsion free $\mathcal{O}$-module;

\item $V$ is a separated presheaf of modules;

\item $\Hom_{\mathcal{O} \Mod} (T, V) = 0$ for $T \in \mathcal{T}(J)$.
\end{enumerate}
\end{lemma}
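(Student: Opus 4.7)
The plan is to establish the cycle (1)$\Rightarrow$(2)$\Rightarrow$(1) first, then (1)$\Leftrightarrow$(3), since the two pairs of equivalences use quite different ideas. Throughout, I will exploit that Grothendieck topologies are closed under finite intersections and inclusions (so Lemma \ref{torsion submodules}(3) applies), and that $\mathcal{T}(J)$ is closed under submodules, quotients, and direct sums as shown in the previous section.

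For (1)$\Leftrightarrow$(2), the key observation is that separatedness is an \emph{internal} uniqueness condition that translates directly to a torsion condition by taking differences. Given $x\in\Ob(\C)$, $S\in J(x)$, and two amalgamations $v_1,v_2\in V_x$ of the same matching family $\{v_f\}_{f\in S}$, the difference $v_1-v_2$ satisfies $f\cdot(v_1-v_2)=v_f-v_f=0$ for every $f\in S$, hence is a $J$-torsion element. Conversely, if $w\in V_x$ satisfies $S\cdot w=0$ for some $S\in J(x)$, then both $w$ and $0$ are amalgamations of the zero matching family on $S$. Therefore separatedness for all $x$ and all $S\in J(x)$ is equivalent to the vanishing of every element annihilated by some covering sieve, which is exactly $J$-torsion freeness.

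For (1)$\Rightarrow$(3), suppose $V$ is $J$-torsion free and let $\phi\colon T\to V$ with $T\in\mathcal{T}(J)$. For any $x\in\Ob(\C)$ and $v\in T_x$, the element $v$ is $J$-torsion by Lemma \ref{torsion submodules}(3), so pick $S\in J(x)$ with $S\cdot v=0$; then $S\cdot\phi_x(v)=\phi_\bullet(S\cdot v)=0$ shows $\phi_x(v)\in(\mathcal{T}_J(V))_x=0$. Hence $\phi=0$. (Alternatively, apply $\mathcal{T}_J$ to the inclusion $\phi(T)\hookrightarrow V$ and use that $\phi(T)$, being a quotient of $T$, is $J$-torsion.)

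For (3)$\Rightarrow$(1), consider the canonical inclusion $\iota\colon\mathcal{T}_J(V)\hookrightarrow V$. By Lemma \ref{torsion submodules}(3), $\mathcal{T}_J(V)$ itself lies in $\mathcal{T}(J)$, so the hypothesis forces $\iota=0$, whence $\mathcal{T}_J(V)=0$. I do not anticipate any real obstacle here: the only small point requiring care is to invoke closure of $J$ under finite intersections when passing from ``every element of $\mathcal{T}_J(V)$ is $J$-torsion'' back to ``$\mathcal{T}_J(V)\in\mathcal{T}(J)$'', which is exactly the content of Lemma \ref{torsion submodules}(3) and is available for any Grothendieck topology.
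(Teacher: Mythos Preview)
Your proof is correct and follows essentially the same approach as the paper: the equivalence of (1) and (2) via the difference-of-amalgamations trick is identical (the paper phrases it contrapositively but the content is the same), and your explicit argument for (1)$\Leftrightarrow$(3) simply unpacks what the paper dismisses in one line as ``clear.''
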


\begin{proof}
It is clear that $V$ is $J$-torsion free if and only if (3) holds, so we only need to show the equivalence of (1) and (2). If $V$ is not $J$-torsion free, then there exist $x \in \Ob(\C)$, $0 \neq v \in V_x$, and $S \in J(x)$ such that $S \cdot v = 0$. Now by setting $v_f = 0 \in V_{\bullet}$ for each $(f: x \to \bullet) \in S$, we get a matching family which has at least two amalgamations, namely $0 \in V_x$ and $v \in V_x$. Thus $V$ is not separated. Conversely, if $V$ is not separated, we can find $x \in \Ob(\C)$, $S \in J(x)$, and a matching family $\{ v_f \in V_{\bullet} \mid (f: x \to \bullet) \in S\}$ with two distinct amalgamations $v, w \in V_x$. But in this case $v - w$ is sent to 0 by any $f \in S$, so $V$ is not $J$-torsion free. This finishes the proof.
\end{proof}

Now we are ready to describe homological characterizations of sheaves of modules on $(\BC^{\op}, \mathcal{O})$.
\begin{theorem} \label{characterize sheaves}
Let $V$ be an $\mathcal{O}$-module. The following statements are equivalent:
\begin{enumerate}
\item $V$ is a sheaf of modules on $(\BC^{\op}, \mathcal{O})$;

\item $V$ is a $J$-saturated $\mathcal{O}$-module;

\item $V$ is right perpendicular to all $J$-torsion modules.
\end{enumerate}
\end{theorem}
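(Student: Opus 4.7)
My plan is to prove (1) $\Leftrightarrow$ (3) directly, and then derive (2) $\Leftrightarrow$ (3) as a formal consequence of the hereditary torsion theory from Proposition \ref{torsion theory}. The technical backbone throughout is the short exact sequence
\[
0 \to \underline{S} \to P(x) \to P(x)/\underline{S} \to 0
\]
constructed just before Lemma \ref{correspondence} for each $x \in \Ob(\C)$ and $S \in J(x)$; its quotient is $J$-torsion because it is generated by the $J$-torsion element $\boldsymbol{1}_x$, together with Lemma \ref{torsion submodules}.

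For (3) $\Rightarrow$ (1), I first reinterpret the sheaf axiom of Remark \ref{equivalent definition} inside $\mathcal{O}\Mod$. Since $\Hom_{\mathcal{O}\Mod}(P(x), V) \cong V_x$ by the $\ind \dashv \res$ adjunction, and $\Hom_{\mathcal{O}\Mod}(\underline{S}, V)$ naturally parameterizes matching families for $S$, the sheaf condition amounts to the inclusion $\underline{S} \hookrightarrow P(x)$ inducing a bijection on $\Hom(-, V)$ for every $S \in J(x)$. Applying $\Hom(-, V)$ to the displayed sequence yields a long exact sequence whose outer terms $\Hom(P(x)/\underline{S}, V)$ and $\Ext^1(P(x)/\underline{S}, V)$ vanish by the perpendicularity hypothesis, delivering the required bijection.

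For (1) $\Rightarrow$ (3), the $\Hom$-vanishing is immediate from Lemma \ref{preliminary equivalences}: a sheaf is separated, hence $J$-torsion free, hence $\Hom(T, V) = 0$ for every $T \in \mathcal{T}(J)$. The $\Ext^1$-vanishing is the main obstacle, and I plan to invoke the sheafification functor $\sharp$. First, a direct computation with the plus construction establishes that $\sharp T = 0$ for every $T \in \mathcal{T}(J)$: any matching family $\eta\colon S \to T$ has torsion values $\eta(f) \in T_y$, each annihilated by some $S_f \in J(y)$, so the transitivity axiom ensures $\hat{S} = \bigcup_{f \in S} S_f f \in J(x)$ with $\eta|_{\hat{S}} = 0$, placing $\eta$ in the zero class of the colimit defining $T^+$. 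Given any extension $0 \to V \to W \to T \to 0$ with $T$ torsion, exactness of $\sharp$ together with $\sharp T = 0$ yield $\sharp W \cong \sharp V = V$, the last equality using (1). Naturality of the adjunction unit then forces the composite $V \hookrightarrow W \to \sharp W = V$ to be the identity of $V$, supplying a retraction and thereby splitting the sequence.

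Finally, (2) $\Leftrightarrow$ (3) is a formal consequence of the hereditary torsion structure. Lemma \ref{preliminary equivalences} already identifies $\mathcal{T}_J(V) = 0$ with $\Hom(T, V) = 0$ for all $T \in \mathcal{T}(J)$. Assuming $V$ is torsion free, applying $\mathcal{T}_J$ to an extension $0 \to V \to W \to T \to 0$ yields $0 \to \mathcal{T}_J(W) \to T \to \mathrm{R}^1 \mathcal{T}_J(V)$, so vanishing of $\mathrm{R}^1 \mathcal{T}_J(V)$ gives $\mathcal{T}_J(W) \cong T$ and hence a splitting; conversely, embedding $V$ into an injective $I$ and computing $\mathrm{R}^1 \mathcal{T}_J(V) \cong \mathcal{T}_J(I/V)/\mathcal{T}_J(I)$, the hypothesis $\Ext^1(\mathcal{T}_J(I/V), V) = 0$ lets one lift torsion elements of $I/V$ back to $I$, forcing $\mathrm{R}^1 \mathcal{T}_J(V) = 0$. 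The delicate step across the whole proof is the verification $\sharp T = 0$ for $T \in \mathcal{T}(J)$ in (1) $\Rightarrow$ (3), where the transitivity axiom of the Grothendieck topology plays an essential role.
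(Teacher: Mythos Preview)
Your argument is correct, and it follows a genuinely different route from the paper's.

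The paper argues $(2)\Leftrightarrow(3)$ via the injective hull (both conditions reduce to $E/V$ being torsion free), then proves $(1)\Rightarrow(2)$ by invoking the Geigle--Lenzing localization \cite[Proposition~2.2]{GLen} to produce an exact sequence $0\to V\to\tilde V\to W\to 0$ with $\tilde V$ saturated and $W$ torsion, and uses the sheaf property to force $W=0$; finally $(2)\Rightarrow(1)$ uses the sheafification $V^+$ and finds amalgamations there first. By contrast, you go straight for $(1)\Leftrightarrow(3)$: your $(3)\Rightarrow(1)$ is the long exact sequence argument on $0\to\underline{S}\to P(x)\to P(x)/\underline{S}\to 0$, which the paper postpones to Corollary~\ref{another equivalence}; and your $(1)\Rightarrow(3)$ replaces the appeal to external localization theory by the concrete computation $\sharp T=0$ for $T\in\mathcal T(J)$ (via the plus construction and the transitivity axiom) together with the splitting trick using naturality of the sheafification unit. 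Your $(2)\Leftrightarrow(3)$ is then a direct manipulation of the long exact sequence for $\mathcal T_J$ rather than the injective-hull reduction. What your approach buys is self-containment: you never need the existence of the Geigle--Lenzing reflection, only exactness of $\sharp$ and an explicit colimit computation. What the paper's approach buys is that it keeps the analysis entirely inside the torsion-theoretic language and makes the role of the sheafification-as-localization correspondence more transparent; in particular the fact $\sharp T=0$ is never isolated, though it is implicit.
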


\begin{proof}
By Lemma \ref{preliminary equivalences}, to prove the theorem, we can assume that $V$ is a $J$-torsion free $\mathcal{O}$-module.

$(2) \Leftrightarrow (3)$. In \cite[Lemma 3.6]{DLLX} we have proved the conclusion for the special case that $J$ is the atomic Grothendieck topology. We use the same strategy to establish the equivalence for all Grothendieck topologies.

Suppose that $V$ is $J$-torsion free. It has an injective hull $E$ in $\mathcal{O} \Mod$ since $\mathcal{O} \Mod$ is a Grothendieck category. We claim that $E$ is torsion free. Otherwise, we have $\mathcal{T}_J(V) \neq 0$. But $V \cap \mathcal{T}_J(V) = 0$ since $V$ is $J$-torsion free. This is impossible because the inclusion $V \to E$ is essential by \cite[Definition 47.2.1]{Stack}. Thus $E$ is $J$-torsion free and injective. Applying $\Hom_{\mathcal{O} \Mod} (T, -)$ to the short exact sequence $0 \to V \to E \to E/V \to 0$ of $\mathcal{O}$-modules we deduce that $V$ is right perpendicular to every $J$-torsion module $T$ if and only if $E/V$ is $J$-torsion free. But this is equivalent to the condition that $V$ is $J$-saturated via applying the functor $\mathcal{T}_J$ to the short exact sequence.

$(1) \Rightarrow (2)$. Let $V$ be a sheaf of modules on $(\BC^{\op}, \mathcal{O})$. It is $J$-torsion free. Since the category of $J$-torsion modules is a localizing subcategory by \cite[Proposition 2.5]{GLen}, applying \cite[Proposition 2.2]{GLen} we obtain a short exact sequence
\[
0 \to V \to \tilde{V} \to W \to 0
\]
such that $W$ is $J$-torsion and $\tilde{V}$ is right perpendicular to all $J$-torsion modules, and hence $J$-saturated by the equivalence of (2) and (3). To show that $V$ is $J$-saturated, it suffices to check that $W$ is $J$-torsion free, which forces $W = 0$ and hence $V \cong \tilde{V}$.

Let $w \in W_x$ be a $J$-torsion element for some $x \in \Ob(\C)$. We can find a certain $S \in J(x)$ such that $S \cdot w = 0$. Let $\tilde{w} \in \tilde{V}_x$ be a preimage of $w$, so we have $f \cdot \tilde{w} \in V_y$ for every $f: x \to y$ in $S$. It is clear that the rule
\[
(f: x \to y) \in S \mapsto f \cdot \tilde{w}
\]
is a matching family for $S$ of elements in $V$. Since $V$ is a sheaf of modules, we can find an amalgamation $\tilde{u} \in V_x$ such that $f \cdot \tilde{u} = f \cdot \tilde{w}$ for every $f \in S$. We can view $\tilde{u}$ as an element in $\tilde{V}_x$ by identifying $V$ with a submodule of $\tilde{V}$. Thus in the $\mathcal{O}$-module $\tilde{V}$ one has $f \cdot (\tilde{u} - \tilde{w}) = 0$ for $f \in S$. But $\tilde{V}$ is $J$-torsion free, so $\tilde{w} = \tilde{u} \in V_x$, and hence $w = 0$. Consequently, $W$ is $J$-torsion free.

$(2) \Rightarrow (1)$: Suppose that $V$ is $J$-saturated. It is a separated presheaf of modules by Lemma \ref{preliminary equivalences}. Take an arbitrary $x \in \Ob(\C)$, a certain $S \in J(x)$, and a matching family for $S$ of elements in $V$
\[
\{v_f \in V_y \mid (f: x \to y) \in S \}. \quad \quad (\dag)
\]
 We need to show the existence of amalgamations. By \cite[III.5, Lemma 2]{MM}, the canonical map $\eta_V: V \to V^+$ is injective where $V^+ = V^{\sharp}$ is the sheafification of $V$. Thus we obtain the following short exact sequence in $\mathcal{O} \Mod$:
\[
0 \to V \to V^+ \to W \to 0.
\]
Furthermore, by \cite[III.5, Lemma 5]{MM}, $V^+$ is a sheaf of modules on $(\BC^{\op}, \mathcal{O})$, and hence a $J$-saturated $\mathcal{O}$-module by the conclusion of the only if direction. Applying $\mathcal{T}_J$ to this short exact sequence, we deduce that $W$ is $J$-torsion free since $V$ is $J$-saturated.

We identify $V$ as a submodule of $V^+$ via the canonical map $\eta_V$. Under this identification, the matching family $(\dag)$ is a matching family for $S$ of elements in $V^+$. But $V^+$ is a sheaf of modules on $(\BC^{\op}, \mathcal{O})$, so we can find an amalgamation $v \in V^+_x$ such that $f \cdot v = v_f$ for $f \in S$. Passing to the quotient $W$, this means $f \cdot \bar{v} = \overline{v_f}$ for $f \in S$, where $\bar{v} \in W_x$ and $\overline{v_f} \in W_y$ are the images of $v \in V^+_x$ and $v_f \in V^+_y$ respectively. But $\overline{v_f} = 0$ as $v_f \in V_y$, so $\bar{v}$ is $J$-torsion. Since $W$ is $J$-torsion free, this happens if and only if $\bar{v} = 0$, namely $v \in V_x$ is the desired amalgamation.
\end{proof}

We describe a few consequences of the above theorem. The first one can be viewed as a ringed version of Remark \ref{equivalent definition}.

\begin{corollary} \label{another equivalence}
Given an $\mathcal{O}$-module $V$, the following statements are equivalent:
\begin{enumerate}
\item $V$is a sheaf of modules on $(\BC^{\op}, \mathcal{O})$;

\item for any $x \in \Ob(\C)$ and any $S \in J(x)$, the inclusion $\underline{S} \to P(x)$ induces an isomorphism
\[
\Hom_{\mathcal{O} \Mod} (P(x), V) \cong \Hom_{\mathcal{O} \Mod} (\underline{S}, V),
\]
where $P(x)$ and $\underline{S}$ are as defined before Lemma \ref{correspondence};

\item for any $x \in \Ob(\C)$ and any $S \in J(x)$, the inclusion $\mathbb{Z}S \to \mathbb{Z}\C(x, -)$ induces an isomorphism
\[
\Hom_{\underline{\mathbb{Z}} \Mod} (\mathbb{Z}\C(x, -), V) \cong \Hom_{\underline{\mathbb{Z}} \Mod} (\mathbb{Z}S, V),
\]
where $\underline{\mathbb{Z}}$ is the constant structure sheaf induced by the ring of integers and $\mathbb{Z}S$ is the free $\mathbb{Z}$-module spanned by elements in $S$.
\end{enumerate}
\end{corollary}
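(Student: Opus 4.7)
The plan is to translate the Hom-isomorphisms in (2) and (3) into the classical sheaf axiom by identifying each side with a familiar object: values of $V$ on the one hand, and matching families on the other.

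The key step for (1) $\Leftrightarrow$ (2) is to set up two natural identifications. Using the induction-restriction adjunction recalled before Lemma \ref{correspondence} with $W = \mathcal{O}_x$, one has $\Hom_{\mathcal{O} \Mod}(P(x), V) \cong \Hom_{\mathcal{O}_x \Mod}(\mathcal{O}_x, V_x) \cong V_x$, with $v \in V_x$ corresponding to the morphism $\phi$ satisfying $\phi_x(\boldsymbol{1}_x) = v$. For $\underline{S}$, using the explicit description $\underline{S}_y = \bigoplus_{(f: x \to y) \in S} \mathcal{O}_{y, f}$, a morphism $\underline{S} \to V$ is determined by the images $v_f \in V_y$ of the generators $\boldsymbol{1}_{y, f}$, and naturality with respect to morphisms $g: y \to z$ in $\C$, which send $\boldsymbol{1}_{y, f}$ to $\boldsymbol{1}_{z, gf}$, forces $g \cdot v_f = v_{gf}$; this is exactly the matching-family condition. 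Thus $\Hom_{\mathcal{O} \Mod}(\underline{S}, V)$ is canonically the set of matching families for $S$ of elements in $V$, and under these identifications the restriction map sends $v \in V_x$ to $(f \cdot v)_{f \in S}$. Condition (2) is then the statement that every matching family admits a unique amalgamation, i.e., the sheaf axiom.

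The equivalence (1) $\Leftrightarrow$ (3) follows from the same mechanism applied in the category $\underline{\mathbb{Z}} \Mod$, once one observes that $V$ is a sheaf of modules on $(\BC^{\op}, \mathcal{O})$ if and only if its underlying presheaf of abelian groups is a sheaf on $(\BC^{\op}, \underline{\mathbb{Z}})$ (both reduce to the sheaf condition on the underlying presheaf of sets). The analogous identifications $\Hom_{\underline{\mathbb{Z}} \Mod}(\mathbb{Z}\C(x, -), V) \cong V_x$ and $\Hom_{\underline{\mathbb{Z}} \Mod}(\mathbb{Z}S, V) \cong \{\text{matching families}\}$ convert (3) into the same sheaf axiom. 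As an alternative route, (1) $\Rightarrow$ (2) can be derived from Theorem \ref{characterize sheaves} by applying $\Hom_{\mathcal{O} \Mod}(-, V)$ to the short exact sequence $0 \to \underline{S} \to P(x) \to P(x)/\underline{S} \to 0$ and noting that $P(x)/\underline{S}$ is $J$-torsion (it is generated by $\boldsymbol{1}_x$, which is annihilated by $S$ in the quotient); right-perpendicularity of $V$ then yields $\Hom(P(x)/\underline{S}, V) = 0 = \Ext^1(P(x)/\underline{S}, V)$. The main technical obstacle is the careful Yoneda-style verification that $\Hom_{\mathcal{O} \Mod}(\underline{S}, V)$ really is the set of matching families, with the subtle point being the interaction between the coproduct indexing by $S$ and the $\mathcal{O}$-module structure; once this is in place, all three equivalences are essentially formal translations of a single adjunction.
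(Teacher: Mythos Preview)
Your argument is correct, but it is genuinely different from the paper's. You prove the equivalences directly by linearizing the sheaf axiom: you identify $\Hom_{\mathcal{O}\Mod}(P(x),V)$ with $V_x$ via the $(\ind,\res)$-adjunction, and $\Hom_{\mathcal{O}\Mod}(\underline{S},V)$ with the set of matching families for $S$, so that (2) becomes literally the statement in Definition~\ref{sheaf def}. This is precisely the ``ringed version of Remark~\ref{equivalent definition}'' that the paper alludes to just before the corollary, and it does not rely on Theorem~\ref{characterize sheaves} at all.

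The paper instead derives everything from Theorem~\ref{characterize sheaves}. For $(1)\Rightarrow(2)$ it applies $\Hom_{\mathcal{O}\Mod}(-,V)$ to $0\to\underline{S}\to P(x)\to P(x)/\underline{S}\to 0$ and uses right-perpendicularity to the $J$-torsion module $P(x)/\underline{S}$ (this is your ``alternative route''). For $(2)\Rightarrow(1)$ the paper does \emph{not} reverse your Yoneda identification; rather it shows $V$ is right perpendicular to every $J$-torsion module by noting that $\{P(x)/\underline{S}\}$ generates $\mathcal{T}(J)$, building a short exact sequence $0\to U'\to Q\to U\to 0$ with $Q$ a coproduct of such generators, and running a long-exact-sequence bootstrap. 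The treatment of $(1)\Leftrightarrow(3)$ is the same as yours. Your route is more elementary and self-contained; the paper's route keeps the corollary inside the torsion-theoretic framework and makes the dependence on Theorem~\ref{characterize sheaves} explicit.
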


\begin{proof}
(1)$\Rightarrow$(2). Applying $\Hom_{\mathcal{O} \Mod} (-, V)$ to the short exact sequence
\[
0 \to \underline{S} \to P(x) \to P(x) / \underline{S} \to 0
\]
we obtain another exact sequence
\begin{align*}
& 0 \to \Hom_{\mathcal{O} \Mod} (P(x) / \underline{S}, V) \to \Hom_{\mathcal{O} \Mod} (P(x), V)\\
& \to \Hom_{\mathcal{O} \Mod} (\underline{S}, V) \to \Ext^1_{\mathcal{O} \Mod} (P(x) / \underline{S}, V) \to 0
\end{align*}
by noting that $P(x)$ is projective in $\mathcal{O} \Mod$ (this is clear when $\mathcal{O}$ is a constant structure sheaf, and it actually holds for arbitrary structure sheaves). Since $V$ is right perpendicular to all $J$-torsion $\mathcal{O}$-modules by the previous theorem and $P(x) / \underline{S}$ is a $J$-torsion module, the first term and the last term vanish. Thus we have the desired isomorphism.

(2)$\Rightarrow$(1). By the previous theorem, it suffices to show that
\[
\Hom_{\mathcal{O} \Mod} (U, V) = 0 = \Ext^1_{\mathcal{O} \Mod} (U, V)
\]
for any $J$-torsion module $U$. The long exact sequence in the previous paragraph and the given isomorphism tell us that these identities hold for $U = P(x)/\underline{S}$ with $x \in \Ob(\C)$ and $S \in J(x)$. But
\[
\mathcal{P} = \{P(x)/\underline{S} \mid x \in \Ob(\C), \, S \in J(x) \}
\]
is a set of generators of $\mathcal{T}(J)$, so we may construct a short exact sequence
\[
0 \to U' \to Q \to U \to 0
\]
where $Q$ is a coproduct of $\mathcal{O}$-modules in $\mathcal{P}$. Consider the long exact sequence
\begin{align*}
& 0 \to \Hom_{\mathcal{O} \Mod} (U, V) \to \Hom_{\mathcal{O} \Mod} (Q, V) \to \Hom_{\mathcal{O} \Mod} (U', V)\\
& \to \Ext^1_{\mathcal{O} \Mod} (U, V) \to \Ext^1_{\mathcal{O} \Mod} (Q, V).
\end{align*}
The second and the fifth terms vanish by the structure of $Q$. It follows that the first term vanishes as well. But since $U$ can be any object in $\mathcal{T}(J)$ and $U'$ is contained in $\mathcal{T}(J)$, the third term also vanishes. Consequently, $\Ext^1_{\mathcal{O} \Mod} (U, V) = 0$, and the proof is complete.

(1)$\Leftrightarrow$(3). Note that $V$ is a sheaf of modules on $(\BC^{\op}, \mathcal{O})$ if and only if it is a sheaf of abelian groups; that is, viewed as a $\underline{\mathbb{Z}}$-module it is a sheaf of modules on $(\BC^{\op}, \underline{\mathbb{Z}})$. The equivalence between (1) and (3) then follows from this observation as well as the equivalence between (1) and (2).
\end{proof}

The second one gives a characterization of injective sheaves.

\begin{corollary} \label{injective sheaves}
Injective objects in $\Sh(\BC^{\op}, \mathcal{O})$ coincide with injective $J$-torsion free $\mathcal{O}$-modules.
\end{corollary}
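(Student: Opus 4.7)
The plan is to prove the two inclusions separately, using Theorem \ref{characterize sheaves} to bridge between sheaves and $J$-saturated modules, and the sheafification adjunction $(\sharp, \iota)$ between $\mathcal{O} \Mod$ and $\Sh(\BC^{\op}, \mathcal{O})$.

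For the ``if'' direction, suppose $I$ is an injective $\mathcal{O}$-module which is $J$-torsion free. Since $I$ is injective in $\mathcal{O} \Mod$, all of its higher right derived functors vanish on it, so in particular $\mathrm{R}^1 \mathcal{T}_J(I) = 0$; combined with $\mathcal{T}_J(I) = 0$ this shows $I$ is $J$-saturated, hence a sheaf by Theorem \ref{characterize sheaves}. To prove $I$ is injective in $\Sh(\BC^{\op}, \mathcal{O})$, I would take a monomorphism $F \hookrightarrow G$ in the sheaf category together with a morphism $F \to I$. Since the inclusion $\iota: \Sh(\BC^{\op}, \mathcal{O}) \hookrightarrow \mathcal{O} \Mod$ is a right adjoint to $\sharp$, it preserves finite limits, so $F \hookrightarrow G$ remains a monomorphism in $\mathcal{O} \Mod$. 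Injectivity of $I$ there provides an extension $G \to I$, which is automatically a morphism in the sheaf category.

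For the ``only if'' direction, suppose $I$ is injective in $\Sh(\BC^{\op}, \mathcal{O})$. As an object of $\Sh(\BC^{\op}, \mathcal{O})$, $I$ is in particular a $J$-saturated module, hence $J$-torsion free by Lemma \ref{preliminary equivalences}. It remains to check that $I$ is injective in $\mathcal{O} \Mod$. Given a monomorphism $M \hookrightarrow N$ in $\mathcal{O} \Mod$ and a morphism $\phi: M \to I$, I would apply the sheafification functor $\sharp$, which is exact (it is a left adjoint and, as a Serre localization, also preserves finite limits), to obtain a monomorphism $M^{\sharp} \hookrightarrow N^{\sharp}$ in $\Sh(\BC^{\op}, \mathcal{O})$. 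Using the adjunction together with the fact that $I$ is a sheaf, $\phi$ factors through the unit $M \to M^{\sharp}$ as $\phi = \phi^{\sharp} \circ \eta_M$ for a unique $\phi^{\sharp}: M^{\sharp} \to I$. Injectivity of $I$ in the sheaf category extends $\phi^{\sharp}$ to some $\psi: N^{\sharp} \to I$, and the composite $\psi \circ \eta_N: N \to I$ is the required extension of $\phi$ by naturality of $\eta$.

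The two directions are each essentially formal once the correct input is identified, so the only mild obstacle is being careful about how Theorem \ref{characterize sheaves} is being invoked: on the ``if'' side one uses the equivalence between being a sheaf and being $J$-saturated to upgrade ``injective plus torsion-free'' to ``a sheaf'', and on the ``only if'' side one relies on the exactness of $\sharp$ (so that monomorphisms are preserved) together with the sheaf adjunction to transport injectivity from $\Sh(\BC^{\op}, \mathcal{O})$ back to $\mathcal{O} \Mod$. No new constructions are needed beyond those already introduced in the paper.
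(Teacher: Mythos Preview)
Your argument is correct and is the standard adjunction argument one expects here: the ``if'' direction uses that $\iota$ preserves monomorphisms, and the ``only if'' direction uses exactness of $\sharp$ together with naturality of the unit. The paper itself does not spell out a proof but defers to \cite[Corollary 3.9]{DLLX}, whose argument is essentially the one you wrote down, so there is nothing to add.
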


\begin{proof}
This result has been proved in \cite[Corollary 3.9]{DLLX} for the atomic Grothendieck topology. Its proof still works in full generality.
\end{proof}

Let $\mathcal{O} \Mod^{\sat}$ be the full subcategory of $\mathcal{O} \Mod$ consisting of $J$-saturated $\mathcal{O}$-modules. The third consequence of Theorem \ref{characterize sheaves} provides the following equivalence:

\begin{corollary} \label{equivalence}
One has the following identification and equivalence of categories:
\[
\xymatrix{
\Sh(\BC^{\op}, \mathcal{O}) = \mathcal{O} \Mod^{\sat} \ar[r]^-{\simeq} & \mathcal{O} \Mod / \mathcal{T}(J).
}
\]
\end{corollary}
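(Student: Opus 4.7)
The plan is to split the statement into two pieces: the equality $\Sh(\BC^{\op}, \mathcal{O}) = \mathcal{O} \Mod^{\sat}$ and the equivalence with the Serre quotient. The first piece is immediate and requires no real argument. Indeed, Theorem \ref{characterize sheaves} established, for an arbitrary $\mathcal{O}$-module $V$, the equivalence of the conditions ``$V$ is a sheaf of modules on $(\BC^{\op}, \mathcal{O})$'' and ``$V$ is $J$-saturated''. So the full subcategory of sheaves inside $\mathcal{O}\Mod$ coincides \emph{on the nose} with the full subcategory $\mathcal{O}\Mod^{\sat}$, and the identification is a tautology once the theorem is invoked.

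For the equivalence with the Serre quotient I would apply the Geigle--Lenzing framework, which is really the right tool here. The two inputs needed are already available. First, Proposition \ref{torsion theory}(1) tells us that $(\mathcal{T}(J), \mathcal{F}(J))$ is a hereditary torsion pair in the Grothendieck category $\mathcal{O}\Mod$; together with the observation that a coproduct of $J$-torsion modules is again $J$-torsion (any torsion element lies in a finite sub-sum and retains its annihilating sieve), this shows $\mathcal{T}(J)$ is a localizing subcategory in the sense of \cite[Proposition 2.5]{GLen}. Second, the equivalence $(2) \Leftrightarrow (3)$ of Theorem \ref{characterize sheaves} identifies $\mathcal{O}\Mod^{\sat}$ with the right perpendicular subcategory $\mathcal{T}(J)^{\perp}$ of Geigle--Lenzing, namely the full subcategory of $V$ with $\Hom_{\mathcal{O}\Mod}(T,V) = 0 = \Ext^{1}_{\mathcal{O}\Mod}(T,V)$ for all $T \in \mathcal{T}(J)$.

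With these two facts in hand, \cite[Proposition 2.2]{GLen} produces the desired equivalence: the composite of the inclusion $\mathcal{O}\Mod^{\sat} \hookrightarrow \mathcal{O}\Mod$ with the Serre quotient functor $\mathcal{O}\Mod \to \mathcal{O}\Mod/\mathcal{T}(J)$ is an equivalence of categories (with quasi-inverse given by the universal torsion-free, torsion-extension-free ``localization at $\mathcal{T}(J)$''). Since every ingredient has already been established in the earlier sections, there is no genuine obstacle in this proof; the only thing one might pause to justify is that $\mathcal{T}(J)$ is closed under arbitrary coproducts, which is the elementary observation indicated above. The corollary therefore follows essentially by citation.
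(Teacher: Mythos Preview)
Your proposal is correct and follows essentially the same approach as the paper: the identification comes from Theorem~\ref{characterize sheaves}, and the equivalence is obtained by recognizing $\mathcal{O}\Mod^{\sat}$ as the right perpendicular category $\mathcal{T}(J)^{\perp}$ and invoking \cite[Proposition~2.2]{GLen}. The paper's proof is simply a two-line citation of these same ingredients, while you have (correctly) spelled out the intermediate step that $\mathcal{T}(J)$ is localizing---a fact the paper had already used inside the proof of Theorem~\ref{characterize sheaves} via \cite[Proposition~2.5]{GLen}.
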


\begin{proof}
The identification follows from Theorem \ref{characterize sheaves} \footnote{We shall remind the reader that the abelian structure on $\mathcal{O} \Mod^{\sat}$ is not inherited from the abelian structure on $\mathcal{O} \Mod$. Indeed, Given a monomorphism $\alpha: V \to W$ between two $J$-saturated modules, its cokernel in $\mathcal{O} \Mod$ might not coincide with the cokernel in $\mathcal{O} \Mod^{\sat}$.}, while the equivalence follows from \cite[Proposition 2.2(d)]{GLen}.
\end{proof}

Theorem \ref{characterize sheaves} also provides a new description of the sheafification functor $\sharp$, which is more elementary compared to the ones using local isomorphisms or dense monomorphisms. This description has been observed for the atomic Grothendieck topology in \cite[Section 3]{DLLX}, and here we extend it to all Grothendieck topologies. Explicitly, given an $\mathcal{O}$-module $V$, the first step is to take the $J$-torsion free part $V_F = V/\mathcal{T}_J(V)$ and embeds it into an injective hull $I$ (which is $J$-saturated) to get a short exact sequence $0 \to V_F \to I \to C \to 0$ of $\mathcal{O}$-modules. In the second step, we construct the pull back of $I \to C$ and $C_T = \mathcal{T}_J(C) \to C$ to obtain the following commutative diagram such that all rows and columns are exact:
\[
\xymatrix{
0 \ar[r] & V_F \ar[r] \ar@{=}[d] & \widetilde{V} \ar[r] \ar[d] & C_T \ar[d] \ar[r] & 0\\
0 \ar[r] & V_F \ar[r] & I \ar[d] \ar[r] & C \ar[r] \ar[d] & 0\\
 & & C_F \ar@{=}[r] & C_F.
}
\]
Then $V^{\sharp} \cong \widetilde{V}$. For more details, please refer to \cite[Section 3]{DLLX}.

Now we consider sheaf cohomology groups. Given an object $x \in \Ob(\C)$, we can define a functor
\[
\Gamma_x^p: \mathcal{O} \Mod \to \mathcal{O}_x \Mod, \quad V \mapsto V_x,
\]
which is exact. Restricting it to the full subcategory $\Sh(\BC^{\op}, \mathcal{O})$, we get a functor $\Gamma_x$ which is only left exact since the abelian structure of $\Sh(\BC^{\op}, \mathcal{O})$ is not inherited from that of $\mathcal{O} \Mod$. Thus one can define $\mathrm{R}^i \Gamma_x(V)$, the $i$-th right derived functor of $\Gamma_x$, called the $i$-th \textit{sheaf cohomology group} of $V$ at $x$. For details, please refer to \cite[I.21.2]{Stack}.

Sheaf cohomology groups can be computed via injective resolutions. The following result relates them to right derived functors of $\mathcal{T}_J$.

\begin{corollary} \label{sheaf cohomology}
For $i \geqslant 1$, one has the following isomorphism of functors
\[
\mathrm{R}^i \Gamma_x \cong \Gamma_x^p \circ (\mathrm{R}^{i+1} \mathcal{T}_J).
\]
\end{corollary}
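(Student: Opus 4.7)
The plan is to factor the sections functor as $\Gamma_x = \Gamma_x^p \circ \iota$, where $\iota \colon \Sh(\BC^{\op}, \mathcal{O}) \hookrightarrow \mathcal{O}\Mod$ is the inclusion. Since $\Gamma_x^p$ is exact, one has $\mathrm{R}^i\Gamma_x \cong \Gamma_x^p \circ \mathrm{R}^i\iota$ for all $i \geqslant 0$, so the corollary reduces to the natural isomorphism $\mathrm{R}^i\iota(V) \cong \mathrm{R}^{i+1}\mathcal{T}_J(V)$ in $\mathcal{O}\Mod$ for every sheaf $V$ and every $i \geqslant 1$.

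To prove the reduced claim I would proceed by induction on $i$, anchored by a single short exact sequence. Embed $V$ into its $\mathcal{O}\Mod$-injective hull $E$; by the essentialness argument used in the proof of Theorem~\ref{characterize sheaves}, $E$ is $J$-torsion-free, and hence an injective sheaf by Corollary~\ref{injective sheaves}. Setting $W = E/V$ in $\mathcal{O}\Mod$, the long exact sequence for $\mathcal{T}_J$ together with $\mathrm{R}^1\mathcal{T}_J(V) = 0$ forces $\mathcal{T}_J(W) = 0$, so $W$ is itself $J$-torsion-free; the unit $W \to W^\sharp$ is then injective with $J$-torsion cokernel, producing $0 \to V \to E \to W^\sharp \to 0$ in $\Sh(\BC^{\op}, \mathcal{O})$ and $0 \to W \to W^\sharp \to W^\sharp/W \to 0$ in $\mathcal{O}\Mod$.

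Using $\mathrm{R}^j\iota(E) = 0$ and $\mathrm{R}^j\mathcal{T}_J(E) = 0$ for $j \geqslant 1$, the two long exact sequences give $\mathrm{R}^1\iota(V) \cong W^\sharp/W$ together with the shifts $\mathrm{R}^i\iota(V) \cong \mathrm{R}^{i-1}\iota(W^\sharp)$ for $i \geqslant 2$ and $\mathrm{R}^{i+1}\mathcal{T}_J(V) \cong \mathrm{R}^i\mathcal{T}_J(W)$ for $i \geqslant 1$. The base case $i = 1$ then follows directly: applying $\mathcal{T}_J$ to $0 \to W \to W^\sharp \to W^\sharp/W \to 0$ and using that $W^\sharp$ is $J$-saturated yields $\mathrm{R}^1\mathcal{T}_J(W) \cong W^\sharp/W$, which matches $\mathrm{R}^1\iota(V)$. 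For the inductive step at $i \geqslant 2$, the hypothesis applied to the sheaf $W^\sharp$ at degree $i-1$ gives $\mathrm{R}^{i-1}\iota(W^\sharp) \cong \mathrm{R}^i\mathcal{T}_J(W^\sharp)$, reducing the problem to identifying $\mathrm{R}^i\mathcal{T}_J(W^\sharp)$ with $\mathrm{R}^i\mathcal{T}_J(W)$ through the long exact sequence of $0 \to W \to W^\sharp \to W^\sharp/W \to 0$.

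The hard part is precisely this final identification, which demands the vanishing of $\mathrm{R}^j\mathcal{T}_J$ in positive degrees on the $J$-torsion module $W^\sharp/W$. This amounts to stability of the hereditary torsion pair $(\mathcal{T}(J), \mathcal{F}(J))$, namely closure of $\mathcal{T}(J)$ under injective envelopes in $\mathcal{O}\Mod$. The dual closure of $\mathcal{F}(J)$ is already established by the essentialness argument in the proof of Theorem~\ref{characterize sheaves}, and the statement for $\mathcal{T}(J)$ can be verified by an analogous essentialness argument exploiting the stability axiom of the Grothendieck topology $J$. Once stability is granted, every $J$-torsion $\mathcal{O}$-module admits an injective resolution by $J$-torsion modules on which $\mathcal{T}_J$ acts as the identity, so $\mathrm{R}^j\mathcal{T}_J$ vanishes on $\mathcal{T}(J)$ for $j \geqslant 1$, completing the induction.
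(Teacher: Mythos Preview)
Your reduction to showing $\mathrm{R}^i\iota(V)\cong\mathrm{R}^{i+1}\mathcal{T}_J(V)$ for sheaves $V$ is correct, the dimension-shifting setup is sound, and the base case $i=1$ goes through exactly as you describe. The paper itself gives no independent argument here---it simply refers to \cite[Theorem~3.16]{DLLX} and asserts that the proof there extends---so there is no detailed comparison to make. But your inductive step has a genuine gap.

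You correctly isolate the crux: the identification $\mathrm{R}^i\mathcal{T}_J(W^\sharp)\cong\mathrm{R}^i\mathcal{T}_J(W)$ requires $\mathrm{R}^j\mathcal{T}_J$ to vanish on the $J$-torsion module $W^\sharp/W$ for $j\geqslant 1$, which is equivalent to stability of the torsion pair, i.e.\ closure of $\mathcal{T}(J)$ under injective envelopes in $\mathcal{O}\Mod$. Your proposed justification---``an analogous essentialness argument exploiting the stability axiom of $J$''---does not work. The essentialness argument for $\mathcal{F}(J)$ runs: if $V$ is torsion-free and $V\hookrightarrow E$ is essential, then $\mathcal{T}_J(E)$ is a \emph{submodule} of $E$ meeting $V$ in $\mathcal{T}_J(V)=0$, so essentialness forces $\mathcal{T}_J(E)=0$. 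The would-be dual needs a torsion-free \emph{subobject} of $E$ avoiding $T$, but the torsion-free part $E/\mathcal{T}_J(E)$ is a quotient, not a subobject, and essentialness says nothing about quotients. The ``stability axiom'' of the topology (pullback of covering sieves) is a different notion entirely and does not supply the missing step.

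Stability of hereditary torsion pairs is a substantive extra hypothesis. In the one-object case your setup specialises to a Gabriel topology on a ring, where it is classical that the associated torsion class need not be closed under injective envelopes. So either stability requires a separate, non-obvious argument in this setting, or the proof of the corollary must proceed along a different route that bypasses the vanishing of $\mathrm{R}^j\mathcal{T}_J$ on torsion modules. As written, your induction does not close.
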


\begin{proof}
It has been proved in \cite[Theorem 3.16]{DLLX} for the special case that $J$ is the atomic Grothendieck topology and $\mathcal{O}$ is a constant structure sheaf. The proof actually works in the general situation.
\end{proof}

It is well known that morphisms between ringed sites induce geometric morphisms between categories of sheaves of modules. In the rest of this section we give an illustration of this fact via our torsion theoretic interpretation. Let $(\Co, \, J, \, \mathcal{O})$ and $(\D^{\op}, \, K, \, \mathcal{O}')$ be two ringed sites. To simplify the technical issue we suppose that both $\mathcal{O}$ and $\mathcal{O}'$ are the constant structure sheaf $\underline{\mathbb{Z}}$ induced by $\mathbb{Z}$. This simplification does not cause essential influence to the general case since a sheaf of modules on $(\Co, \, J, \, \mathcal{O})$ is an $\mathcal{O}$-module as well as a sheaf of abelian groups.

Recall that a functor $\phi: \Co \to \D^{\op}$ is \textit{cover-preserving} if for every $x \in \Ob(\C)$ and every $S \in J(x)$, the sieve generated by $\phi(S) = \{\phi(f) \mid f \in S \}$ is contained in $K(\phi(x))$.

\begin{proposition}
Suppose that $\Co$ and $\D^{\op}$ have finite limits and $\phi: \Co \to \D^{\op}$ is cover-preserving and preserves finite limits. Then $\phi$ induces a geometric morphism
\[
\phi^{\ast}: \Sh(\D^{\op}, \, K, \, \underline{\mathbb{Z}}) \to \Sh(\Co, \, J, \, \underline{\mathbb{Z}}).
\]
\end{proposition}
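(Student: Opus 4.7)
The plan is to produce the geometric morphism as an adjoint pair $\phi_{!}\dashv\phi_{\ast}$ of functors between the two sheaf categories, with $\phi_{!}$ left exact, by working first on presheaves and descending via sheafification. Let $\Phi\colon\C\to\D$ denote the covariant functor underlying $\phi$; precomposition gives a functor $\Phi^{\natural}$ from presheaves of abelian groups on $\D^{\op}$ to presheaves of abelian groups on $\Co$, with left adjoint $\Phi_{!}$ (the left Kan extension), characterized on representables by $\Phi_{!}(\underline{\mathbb{Z}}\C(x,-))\cong\underline{\mathbb{Z}}\D(\Phi(x),-)$. First I would verify that $\Phi^{\natural}$ sends sheaves to sheaves, which is exactly what cover-preserving buys: given a sheaf $F$ on $(\D^{\op},K,\underline{\mathbb{Z}})$, $x\in\Ob(\C)$, $S\in J(x)$, and a matching family $\{v_{f}\in F_{\Phi(y)}:(f\colon x\to y)\in S\}$ for $\Phi^{\natural}F$, the sieve $T\subseteq\D(\Phi(x),-)$ generated by $\Phi(S)$ lies in $K(\Phi(x))$; the assignment $g\circ\Phi(f)\mapsto g\cdot v_{f}$ extends $\{v_{f}\}$ to a matching family for $F$ on $T$ (well-definedness coming from the original compatibilities), and the sheaf property of $F$ produces the unique amalgamation in $F_{\Phi(x)}=(\Phi^{\natural}F)_{x}$.

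Next I would set $\phi_{!}:=\sharp\circ\Phi_{!}\circ\iota\colon\Sh(\Co,J,\underline{\mathbb{Z}})\to\Sh(\D^{\op},K,\underline{\mathbb{Z}})$, where $\iota$ and $\sharp$ are the inclusion and sheafification functors on either side. Composing the adjunctions $\sharp\dashv\iota$ and $\Phi_{!}\dashv\Phi^{\natural}$, and using that both $G$ and $\Phi^{\natural}F$ are sheaves, yields
\[
\Hom_{\Sh(\D^{\op})}(\phi_{!}G, F)\cong\Hom(\Phi_{!}G, F)\cong\Hom(G,\Phi^{\natural}F)\cong\Hom_{\Sh(\Co)}(G,\Phi^{\natural}F),
\]
so $\phi_{!}$ is left adjoint to $\phi_{\ast}:=\Phi^{\natural}\big|_{\Sh(\D^{\op})}$. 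This adjunction also admits a torsion-theoretic reading through Corollary \ref{equivalence}: the cover-preserving hypothesis forces $\Phi_{!}$ to send the generators $P(x)/\underline{S}$ of $\mathcal{T}(J)$ into $\mathcal{T}(K)$, since $\Phi_{!}(P(x)/\underline{S})\cong P(\Phi(x))/\underline{T}$ with $T\in K(\Phi(x))$, so $\Phi_{!}$ descends to the Serre quotients $\underline{\mathbb{Z}}\Mod/\mathcal{T}(J)\simeq\Sh$ in a manner compatible with $\phi_{!}$.

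The delicate step is left exactness of $\phi_{!}$. Since sheafification is left exact and $\iota$ is left exact as a right adjoint, it suffices to prove $\Phi_{!}$ left exact on presheaves. Computing $(\Phi_{!}F)(y)=\varinjlim F(x)$ as a colimit over the comma category $(\Phi\downarrow y)$, this commutes with finite limits provided $(\Phi\downarrow y)$ is filtered. Because $\Co$ has finite limits, $\C$ has finite colimits; because $\phi$ preserves finite limits, $\Phi$ preserves finite colimits. Any finite diagram in $(\Phi\downarrow y)$ with vertices $(x_{i},f_{i}\colon\Phi(x_{i})\to y)$ admits a cocone by taking the colimit $x$ of the $x_{i}$ in $\C$ and using $\Phi(x)\cong\mathrm{colim}\,\Phi(x_{i})$ to assemble the $f_{i}$ into a map $\Phi(x)\to y$; parallel pairs are handled identically via coequalizers, establishing filteredness. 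The main obstacle is this translation from the opposite-category hypothesis to the flatness of $\Phi$, together with the standard but non-trivial fact that filtered colimits commute with finite limits in $\mathrm{Ab}$, which is what ultimately powers the left exactness of the inverse-image part of the geometric morphism.
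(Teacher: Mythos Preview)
Your overall architecture—set up the adjunction on presheaves, check it descends to sheaves, and verify left exactness of $\Phi_!$ via filteredness of the comma categories—is sound, and your treatment of left exactness is more explicit than the paper's, which simply asserts it. However, your primary argument that $\Phi^{\natural}$ sends sheaves to sheaves has a gap at the well-definedness step. The matching-family condition for $\{v_f\}$ only constrains the action of morphisms of the form $\Phi(h)$ with $h$ in $\C$; from $g_1\circ\Phi(f_1)=g_2\circ\Phi(f_2)$ with arbitrary $g_i$ in $\D$ you cannot conclude $g_1\cdot v_{f_1}=g_2\cdot v_{f_2}$ ``from the original compatibilities'' alone. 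This step in fact also needs the finite-limit-preserving hypothesis: form the pushout $p$ of $f_1,f_2$ in $\C$ (so $\iota_1 f_1=\iota_2 f_2\in S$), use that $\Phi$ preserves it to factor $g_1,g_2$ through a common $g\colon\Phi(p)\to z$, and then apply the matching condition at $p$ to get $g_1\cdot v_{f_1}=g\cdot v_{\iota_1 f_1}=g\cdot v_{\iota_2 f_2}=g_2\cdot v_{f_2}$.

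The paper sidesteps this issue by arguing through the torsion theory, which you mention only as a parenthetical alternative. It first checks directly that $\tilde{\phi}^{\ast}$ sends $K$-torsion free modules to $J$-torsion free ones (here only cover-preserving is used), then uses the adjunction together with exactness of $\phi_!$ to deduce that $\phi_!$ sends $J$-torsion to $K$-torsion, and hence that $\tilde{\phi}^{\ast}$ sends $K$-saturated modules to $J$-saturated ones; these are exactly the sheaves by Theorem~\ref{characterize sheaves}. Thus the paper invokes exactness of $\phi_!$ already in showing that sheaves go to sheaves, whereas you try to separate the two steps. Your torsion-theoretic aside about $\Phi_!(P(x)/\underline{S})$ is precisely the paper's main line; promoting it to the primary argument would close the gap without further work.
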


\begin{proof}
This is essentially a combination of \cite[C2.3, Lemma 2.3.3]{Jo} and \cite[C2.3, Corollary 2.3.4]{Jo}, so we only give a sketch of the proof. The functor $\phi$ induces a restriction functor
\[
\tilde{\phi}^{\ast}: \PSh(\D^{\op}, \, \underline{\mathbb{Z}}) \to \PSh(\Co, \, \underline{\mathbb{Z}})
\]
between the two presheaf categories as well as a left adjoint
\[
\phi_!: \PSh(\Co, \, \underline{\mathbb{Z}}) \to \PSh(\D^{\op}, \, \underline{\mathbb{Z}})
\]
given by the left Kan extension along $\phi$. Since $\phi$ preserves finite limits, $\phi_!$ is exact. Thus it suffices to show that $\tilde{\phi}^{\ast}$ restricts to a functor
\[
\phi^{\ast}: \Sh(\D^{\op}, \, K, \, \underline{\mathbb{Z}}) \to \Sh(\Co, \, J, \, \underline{\mathbb{Z}}).
\]
In this case, it has an exact left adjoint given by the composite of
\[
\xymatrix{
\Sh(\Co, \, J, \, \underline{\mathbb{Z}}) \ar[r]^-{\textsl{inc}} & \PSh(\Co, \, \underline{\mathbb{Z}}) \ar[r]^-{\phi_!} & \PSh(\D^{\op}, \, \underline{\mathbb{Z}}) \ar[r]^-{\sharp} & \Sh(\D^{\op}, \, K, \, \underline{\mathbb{Z}}).
}
\]

To complete the proof, we need to show that $\tilde{\phi^{\ast}}$ sends $K$-saturated modules to $J$-saturated modules. Since $(\phi_!, \, \tilde{\phi}^{\ast})$ is an adjoint pair and $\phi_!$ is exact, it suffices to show that $\phi_!$ sends $J$-torsion modules to $K$-torsion modules, which happens if and only if $\tilde{\phi}^{\ast}$ sends $K$-torsion free modules to $J$-torsion free modules. This is clear. Indeed, suppose that $V$ is $K$-torsion free. If there exist $x \in \Ob(\C)$ and $v \in (\tilde{\phi}^{\ast} V)_x$ such that $v$ is $J$-torsion, then one can find a covering sieve $S \in J(x)$ such that $S \cdot v = 0$. But this means that $\phi(f)$ sends $v \in V_{\phi(x)}$ to 0 for each $f \in S$. Consequently, $v$ is $K$-torsion since the covering sieve in $K(\phi(x))$ generated by $\phi(S)$ sends it to 0. This forces $v = 0$, so $\tilde{\phi}^{\ast}V$ is $J$-torsion free.
\end{proof}

\begin{remark}
In the above proposition the conditions that $\Co$ and $\D^{\op}$ have finite limits and $\phi$ preserves finite limits are used to show the exactness of the left adjoint $\phi_!$. Since $\phi_!$ is precisely the tensor functor $- \otimes_{\mathbb{Z} \D^{\op}} \mathbb{Z} \C^{\op}$, for categories without finite limits, one can use the following condition: the category algebra $\mathbb{Z} \C^{\op}$ is a right flat $\mathbb{Z} \D^{\op}$-module.
\end{remark}

\section{Grothendieck topologies on artinian EI categories}

In the previous section we have characterized sheaves of modules on $(\BC^{\op}, \mathcal{O})$ as $J$-saturated $\mathcal{O}$-modules and obtained the following equivalence
\[
\Sh(\BC^{\op}, \mathcal{O}) \simeq \mathcal{O} \Mod/\mathcal{T}(J).
\]
The Serre quotient on the right hand side is still complicated, so we want to find a more explicit description for some special Grothendieck topologies $J$. When $J$ is a \textit{subcategory topology} (or called \textit{rigid topology}), this Serre quotient is equivalent to $\mathcal{O}_{\mathscr{D}} \Mod$, where $\mathcal{O}_{\mathscr{D}}$ is the restriction of $\mathcal{O}$ to a full subcategory $\mathscr{D}$ of $\C$,\footnote{Since $\mathcal{O}$ is a covariant functor from $\mathcal{C}$ to the category of associative rings, its shall be restricted to $\D$ rather than $\D^{\op}$.} and hence becomes much more transparent. Furthermore, it has been pointed out in \cite[IV.9]{AGV} that all Grothendieck topologies on a finite Karoubi category are rigid, and this result also holds for artinian posets (see \cite[Theorem 2.12]{Lin}).

The main goal of this section is to prove the above conclusion for EI categories whose underlying posets are artinian. In particular, we generalize the corresponding results for finite acyclic quivers by Murfet \cite{Mur} and for posets by Hemelaer \cite{Hem} and Lindenhovius \cite{Lin}.

\subsection{Definitions and the main result}

Given a skeletal small category $\C$, we impose the following relation $\leqslant$ on $\Ob(\C)$: $x \leqslant y$ if $\C(x, y) \neq \emptyset$. This relation is reflexive and transitive, but in general is not antisymmetric.

\begin{definition}
A small skeletal category $\C$ is called \textit{directed} if the relation $\leqslant $ on $\Ob(\C)$ defined above is a partial order. A directed category $\C$ is called \textit{noetherian} if the poset $(\Ob(\C), \leqslant)$ is noetherian; that is, $\C$ does not contain an infinite sequence of morphisms
\[
\xymatrix{
x_0 \ar[r] & x_1 \ar[r] & x_2 \ar[r] & \ldots
}
\]
such that all objects appearing in it are distinct. Dually, one defines \textit{artinian} directed categories.
\end{definition}

\begin{remark} \label{filtration}
When $\C$ is a directed category, one can construct a natural filtration on $\Ob(\C)$ as follows. Explicitly, let $O_0 = \emptyset$, $O_1$ be the set of objects in $\Ob(\C)$ minimal with respect to $\leqslant$. For an ordinal $\chi$, let $O_{\chi}$ be the union of $O_{\chi - 1}$ and the set of minimal elements in $\Ob(\C) \backslash O_{\chi - 1}$ provided that $\chi$ is a successor ordinal, and let $O_{\chi}$ be the union of $O_{\lambda}$ for $\lambda < \chi$ if $\chi$ is a limit ordinal. In this way we obtain a chain $O_1 \subseteq O_2 \subseteq \cdots \subseteq \Ob(\C)$. Note that $\C$ is artinian if and only if there exists an ordinal $\zeta$ such that $O_{\zeta} = \Ob(\C)$. Dually, by taking maximal objects one gets another filtration $\{O'_{\chi} \}$ on $\Ob(\C)$, and $\C$ is noetherian if and only if there exists an ordinal $\zeta$ such that $O'_{\zeta} = \Ob(\C)$. For details, see \cite[Subsection 3.1]{DLLY}.
\end{remark}

\begin{definition}
Let $\C$ be a directed category. A full subcategory $\mathscr{D}$ is called a \textit{coideal} of $\C$ if it is downward closed: given $x \in \Ob(\mathscr{D})$ and $y \in \Ob(\C)$, if $y \leqslant x$, then $y \in \Ob(\mathscr{D})$ as well. Dually, one can define \textit{ideals} of $\C$.
\end{definition}

The above concepts generalize downward sets and upward sets of posets.

\begin{definition}
Let $\C$ be a directed category. We say that $x \in \Ob(\C)$ is \textit{downward unbounded} if there exists a descending chain
\[
x > x_1 > x_2 > \ldots
\]
of infinite length. Otherwise, $x$ is called \textit{downward bounded}. \textit{Upward unbounded} objects and \textit{upward bounded} objects are defined similarly.
\end{definition}

Let $\C$ be a directed category. Denote by $\C_{db}$ (resp., $\C_{du}$) the full subcategory of $\C$ consisting of downward bounded (resp., downward unbounded) objects. Similarly, one defines $\C_{ub}$ and $\C_{uu}$. The following result can be easily checked, so we omit its proof. The reader can also formulate a dual result for the upward bounded/unbounded case.

\begin{lemma} \label{combinatorics}
Let $\C$ be a directed category and $x$ an object in $\C$. One has:
\begin{enumerate}
\item $\C$ is artinian if and only if $\C = \C_{db}$;

\item $x$ is downward bounded if and only if $\C(y, x) = \emptyset$ for every downward unbounded object $y$;

\item $x$ is downward unbounded if and only if $\C(x, y) = \emptyset$ for every downward bounded object $y$;

\item $\C_{du}$ (resp., $\C_{db}$) is an ideal (resp., a coideal) of $\C$.
\end{enumerate}
\end{lemma}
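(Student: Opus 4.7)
The plan is to handle each of the four items by unwinding the definitions, taking full advantage of the fact that $\leqslant$ is a genuine partial order on $\Ob(\C)$ (in particular antisymmetric), so a morphism between distinct objects always witnesses a strict inequality. Throughout I will use that the identity $1_x \in \C(x,x)$ ensures $\C(x,x) \neq \emptyset$ for every object.

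For (1), observe that by definition $\C$ is artinian exactly when $\C$ contains no infinite sequence of morphisms $x_0 \to x_1 \to x_2 \to \cdots$ with all $x_i$ distinct, i.e.\ no strictly descending chain $x_0 > x_1 > x_2 > \cdots$ in $(\Ob(\C), \leqslant)$. This holds if and only if every object $x$ fails to initiate such a chain, which is precisely the condition $\C = \C_{db}$.

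For (2), in the forward direction suppose $x$ is downward bounded and $y$ is downward unbounded, and assume for contradiction a morphism $y \to x$ exists. Then $y \leqslant x$. The case $y = x$ is impossible since $x$ is downward bounded while $y$ is not. Otherwise $y < x$, and concatenating $x > y > y_1 > y_2 > \cdots$ (where the $y_i$ witness that $y$ is downward unbounded) produces an infinite strictly descending chain starting at $x$, contradicting $x \in \C_{db}$. Conversely, if $\C(y,x) = \emptyset$ for every downward unbounded $y$, then taking $y = x$ (which has $\C(x,x) \ni 1_x$) forces $x$ to be downward bounded. Item (3) is proved by the mirror argument: if $x$ is downward unbounded, $y$ is downward bounded, and $\C(x,y) \neq \emptyset$, then $x \leqslant y$; the case $x = y$ is excluded, so $x < y$, and prepending $y > x > x_1 > x_2 > \cdots$ contradicts $y \in \C_{db}$. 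The converse again uses $1_x \in \C(x,x)$.

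Finally, (4) is immediate from (2) and (3). For the coideal property of $\C_{db}$, suppose $x \in \C_{db}$ and $y \leqslant x$. Then $\C(y,x) \neq \emptyset$, so by the contrapositive of (2) the object $y$ cannot be downward unbounded, hence $y \in \C_{db}$. Dually, if $x \in \C_{du}$ and $x \leqslant y$, then $\C(x,y) \neq \emptyset$, and the contrapositive of (3) forces $y \in \C_{du}$, giving the ideal property.

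There is no genuine obstacle here: the entire lemma is definitional, and the only subtlety is keeping track of the distinction between $y \leqslant x$ and $y < x$ when extending descending chains. The directedness assumption is used precisely to license this distinction.
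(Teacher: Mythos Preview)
Your proof is correct, and since the paper omits the proof entirely (writing ``The following result can be easily checked, so we omit its proof''), there is nothing to compare against; you have simply filled in what the authors left to the reader, by exactly the sort of direct definition-chasing they had in mind.

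One small cosmetic slip: in (1) you write that artinian means ``no infinite sequence of morphisms $x_0 \to x_1 \to x_2 \to \cdots$ with all $x_i$ distinct, i.e.\ no strictly descending chain $x_0 > x_1 > x_2 > \cdots$''. In the paper's conventions a morphism $x_0 \to x_1$ witnesses $x_0 \leqslant x_1$, so the arrow chain you wrote is actually \emph{ascending} (the noetherian condition). The ``i.e.'' clause that follows is the correct artinian condition and is what you actually use, so the argument stands; just delete or reverse the arrow phrase to avoid confusion.
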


From this lemma we deduce an immediate result: if $x$ is downward bounded (resp., downward unbounded), then $\C_{db}(-, x) = \C(-, x)$ (resp., $\C_{du} (x, -) = \C(x, -)$) as sets of morphisms.

Now we introduce the rigid topology and the main theorem of this section. Let $\C$ be a small skeletal category, and $J$ a Grothendieck topology on $\Co$. Recall from \cite[C2.2]{Jo} that $x \in \Ob(\C)$ is called \textit{$J$-irreducible} if $J(x) = \{ \C(x, -) \}$. For $y \in \Ob(\C)$, let $S_y$ be the sieve generated by the family of morphisms (in $\C$ rather than $\Co$) from $y$ to all $J$-irreducible objects. It can be explicitly described as follows
\[
S_y = \bigcup_{x \text{ irreducible}} \C(x, -) \circ \C(y, x)
\]
where $\C(x, -) \circ \C(y, x) = \{ f \circ g \mid f: x \to \bullet \text{ and } g: y \to x\}$.

\begin{definition} \label{def of rigid topologies}
A Grothendieck topology $J$ on $\Co$ is called \textit{rigid} if $S_y \in J(y)$ for every $y \in \Ob(\C)$.
\end{definition}

A rigid Grothendieck topology $J$ is also called a \textit{subcategory topology} with respect to the full subcategory consisting of $J$-irreducible objects. When $J$ is a rigid topology, $S_y$ is the unique minimal covering sieve in $J(y)$, so $J(y) = \{ S \subseteq \C(y, -) \mid S \supseteq S_y \}$. For details, see \cite[C2.2, Example 2.2.4(d), Remark 2.2.5, and Definition 2.2.18]{Jo}.

The main theorem of this section is:

\begin{theorem} \label{rigid topologies}
Let $\C$ be a directed category such that $\C(x, x)$ is a finite set for every $x \in \Ob(\C)$. Then every Grothendieck topology on $\Co$ is rigid if and only if $\C$ is a noetherian EI category.
\end{theorem}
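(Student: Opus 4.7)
The plan is to establish the two directions separately: the sufficiency by noetherian induction, and the necessity by contrapositive construction of explicit non-rigid topologies. For \textbf{sufficiency}, I will show via noetherian induction on $(\Ob(\C), \leqslant)$ that $S_y \in J(y)$ for every Grothendieck topology $J$ on $\Co$ and every $y \in \Ob(\C)$. The base case is when $y$ is maximal: every morphism out of $y$ is then an endomorphism by maximality, hence an automorphism by EI, so $\C(y,-)$ is supported only at $y$ and equals the finite group $\mathrm{Aut}(y)$. Since sieves on $y$ are left ideals of the group $\mathrm{Aut}(y)$, the only sieves are $\emptyset$ and $\C(y,-)$, forcing $J(y) \subseteq \{\emptyset, \C(y,-)\}$; in either subcase ($y$ is irreducible with $S_y = \C(y,-)$, or $\emptyset \in J(y)$ with $S_y = \emptyset$) one has $S_y \in J(y)$. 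For the inductive step assume $S_{y'} \in J(y')$ for all $y' > y$ strictly. If $y$ is $J$-irreducible then $S_y = \C(y,-) \in J(y)$ since $1_y$ appears in the union defining $S_y$. Otherwise pick a proper covering sieve $T \in J(y)$: EI forces every $(f: y \to z) \in T$ to have $z > y$ strictly, since $f \in \mathrm{End}(y) = \mathrm{Aut}(y)$ lying in $T$ would give $1_y = f^{-1} f \in T$ and hence $T = \C(y,-)$. For each such $f$, the inductive hypothesis yields $S_z \in J(z)$, and one checks the inclusion $S_z \subseteq f^*(S_y)$ directly: any $g: z \to w$ factoring as $h \circ g'$ through an irreducible $x$ gives $gf = h \circ (g' f)$ factoring through the same $x$. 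Closure of $J(z)$ under inclusion then gives $f^*(S_y) \in J(z)$ for all $f \in T$, and transitivity concludes $S_y \in J(y)$.

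For \textbf{necessity}, I argue the contrapositive, treating failures of EI and of noetherianity separately. If $\C$ is not EI then some $\mathrm{End}(x)$ is a finite monoid that is not a group; a standard finite-monoid argument (iterate any non-unit $\alpha$: the sequence $\alpha, \alpha^2, \ldots$ stabilizes at an idempotent $e$, and $e \neq 1_x$ since otherwise $\alpha$ would be a unit) produces a non-identity idempotent $e \in \mathrm{End}(x)$. The sieve $S_e = \C(x,-) \cdot e$ on $x$ is proper, for $1_x = g \circ e$ would make $e$ right-invertible and hence invertible in a finite monoid, contradicting $e \neq 1_x$. I then construct a non-rigid Grothendieck topology by taking the closure under the topology axioms of the assignment placing $S_e$ and $\C(x,-)$ in $J(x)$ and only the maximal sieve elsewhere, and verifying that this closure does not force the sieve from $x$ to the $J$-irreducibles into $J(x)$. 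If instead $\C$ is EI but not noetherian, I choose an infinite strictly ascending chain $x_0 < x_1 < x_2 < \cdots$ with connecting morphisms $\alpha_i: x_i \to x_{i+1}$, modeled on the atomic (equivalently dense) topology on $\mathbb{N}$, which is manifestly non-rigid because no object is irreducible and yet the empty sieve is not covering. I then define a Grothendieck topology $J$ on $\Co$ imposing a cofinality condition along the chain — concretely, $S \in J(y)$ iff every morphism $y \to x_i$ admits extensions $y \to x_j$ in $S$ for arbitrarily large $j$. No chain object is $J$-irreducible in this topology, so $S_{x_0}$ contains no morphism of the form $x_0 \to x_j$, and the cofinality requirement forces $S_{x_0} \notin J(x_0)$.

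The \textbf{main obstacle} lies in the necessity direction, specifically in the verification of the Grothendieck topology axioms — especially transitivity — for the two explicit constructions, and in confirming that their closures do not accidentally collapse the topology by making all objects irreducible or all sieves covering. The finiteness hypothesis on $\C(x,x)$ is decisive in the non-EI case to produce the idempotent $e$, since an infinite endomorphism monoid can have non-unit elements whose powers never stabilize at a non-identity idempotent; in the non-noetherian case it primarily aids in controlling automorphism twists of the chosen chain.
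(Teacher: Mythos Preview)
Your sufficiency argument is correct and considerably more direct than the paper's. The paper first proves that $J$ is closed under arbitrary intersections (Lemma~\ref{intersection}), then develops a theory of ``consistent families'' of minimal covering sieves (Proposition~\ref{classify tops}), and only then deduces rigidity by a transfinite induction comparing $S_x$ with the minimal covering sieve. Your single noetherian induction, using only that a proper covering sieve on $y$ contains no automorphisms (by EI) together with $S_z \subseteq f^{\ast}(S_y)$, bypasses all of this machinery. This is a genuine simplification.

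For the necessary direction, your non-noetherian construction (the cofinality topology along a chosen chain) is different from the paper's --- the paper instead passes to the coideal $\C_{uu}$ of upward-unbounded objects and observes that the dense topology there has no irreducible objects yet excludes $\emptyset$ --- but your construction does work once the axioms are verified. Both arguments buy the same thing; the paper's is slightly more uniform because the dense topology is canonical, while yours depends on a choice of chain.

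The genuine gap is in the non-EI case. You produce a proper sieve $S_e = \C(x,-)\cdot e$ and propose to take the Grothendieck topology it generates, then ``verify that this closure does not force the sieve from $x$ to the $J$-irreducibles into $J(x)$.'' But closure under stability pushes $f^{\ast}(S_e)$ into $J(\cod f)$ for \emph{every} $f\colon x \to \bullet$, and these pullbacks can be proper sieves at objects other than $x$, which then propagate further. You give no mechanism to control this propagation or to identify the irreducible locus of the resulting topology, and without that you cannot conclude $S_x \notin J(x)$. The paper avoids this difficulty entirely: it proves two reduction lemmas showing that rigidity of all topologies on $\Co$ descends to any ideal (Lemma~\ref{topologies on coideals}) and then to any coideal of that ideal (Lemma~\ref{topologies on ideals}), thereby isolating the single object $x$. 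On the one-object category $\C(x,x)^{\op}$, a rigid topology must be trivial or maximal, so the finite monoid $\C(x,x)$ has exactly two Grothendieck topologies, forcing it to be a group (Lemma~\ref{topologies on monoids}). This reduction-to-a-monoid strategy is the missing structural ingredient in your plan.
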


\begin{remark}
By \cite[C2.2, Lemma 2.2.21]{Jo} and the remark after its proof, if $\C$ is a Cauchy-complete category containing only finitely many morphisms, or more generally, if the coslice category $x \backslash \C$ is equivalent to a finite category for $x \in \Ob(\C)$, then every Grothendieck topology on $\Co$ is rigid. This does not trivially imply the sufficient direction of the previous theorem. For instance, let $\C$ be a poset such that $\Ob(\C) = \{ x_i \mid i \in \mathbb{N} \} \sqcup \{ - \infty \}$, $x_i > - \infty$ for each $i$, and $x_i$ and $x_j$ are not comparable for $ i \neq j$. Then $\C$ is a noetherian EI category, but the coslice category $-\infty \backslash \C$ is not equivalent to a finite category.
\end{remark}

The proof of the above theorem is complicated and lengthy, so we give it in the next two subsections.

\subsection{The necessary direction}

In this subsection we prove the only if direction of Theorem \ref{rigid topologies}. Throughout this subsection let $\C$ be a directed category. As the starting point, we consider the special case that $\C$ has only one object, so we can identify $\Co$ with a finite monoid $M$, and correspondingly, a sieve on $\Co$ is nothing but a right ideal of $M$.

\begin{lemma} \label{topologies on monoids}
A finite monoid $M$ has exactly two Grothendieck topologies if and only if $M$ is a finite group.
\end{lemma}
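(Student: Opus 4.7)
My plan is to handle the two directions separately. The easy direction shows that for a group, the trivial and maximal topologies are the only two candidates; the hard direction must exhibit a third Grothendieck topology using the failure of the group axiom.

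For the $(\Leftarrow)$ direction, I will argue that in a finite group $M$ the only right ideals are $\emptyset$ and $M$ (any nonempty right ideal $R \ni m$ satisfies $mM = M \subseteq R$). Consequently the only possible Grothendieck topologies on the one-object category are $\{M\}$ and $\{\emptyset, M\}$, and a direct check confirms that both satisfy stability and transitivity, giving exactly two topologies.

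For the $(\Rightarrow)$ direction, assuming $M$ is not a group, I will exhibit the dense topology $J_d$ (already listed as a Grothendieck topology in the example after Definition \ref{def of Grothendieck topologies}) as a third topology. The construction has three ingredients. First, I will establish the finite-monoid lemma that $ab = 1$ forces $a$ to be a unit: one shows $a^k b^k = 1$ for all $k \geq 1$ by induction (peeling off inner $ab$'s), then uses finiteness to pick $n > m$ with $a^n = a^m$, and right-multiplies by $b^m$ to extract $a^{n-m} = a^n b^m = a^m b^m = 1$, so $a^{n-m-1}$ is a two-sided inverse. Second, I will use this lemma to show that $N := M \setminus U$, where $U$ is the group of units, is a proper nonempty two-sided ideal: if $mn \in U$ for some $n \in N$, then $m\bigl(n(mn)^{-1}\bigr) = 1$ forces $m \in U$ by the first step, whence $n = m^{-1}(mn) \in U$, a contradiction, giving $MN \subseteq N$ and symmetrically $NM \subseteq N$. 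Third, I will verify that $J_d$ contains $N$ (for $f \in N$ take $g = 1$, so $fg = f \in N$; for $f \in U$ take $g = f^{-1}n$ for any $n \in N$, so $fg = n \in N$) and excludes $\emptyset$ trivially, so $J_d$ is distinct from both the trivial and maximal topologies.

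I anticipate the main obstacle to be the finite-monoid lemma in the first step, which requires a delicate manipulation of powers to produce a two-sided inverse from a one-sided one; the remaining steps are routine once this lemma is in hand. A minor subtlety to watch for is the convention that sieves in $\Co$ correspond to right ideals of $M$ rather than left ideals, but since the statement is self-dual in $M$ versus $M^{\op}$, this does not change the structure of the argument.
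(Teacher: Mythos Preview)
Your proof is correct and follows essentially the same route as the paper: both directions are handled the same way, and in particular the $(\Rightarrow)$ direction is proved by showing that the dense topology $J_d$ contains a proper nonempty right ideal of $M$ and hence is a third Grothendieck topology distinct from the trivial and maximal ones.

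The only difference is in how the key ideal is defined. The paper takes $N$ to be the set of elements that are not \emph{right} invertible; this is immediately a right ideal (if $nm$ had a right inverse $p$, then $n(mp)=1$), and nonemptiness follows since a monoid in which every element is right invertible is already a group. You instead take $N = M \setminus U$ with $U$ the group of two-sided units, and then invoke your finite-monoid lemma ($ab=1 \Rightarrow a$ is a unit) to show $N$ is a two-sided ideal. Your lemma in fact shows that in a finite monoid ``right invertible'' and ``unit'' coincide, so your $N$ and the paper's $N$ are the same set; your argument is simply a slightly longer path to the same destination. The paper's choice avoids the auxiliary lemma entirely, while your version has the minor payoff of yielding a two-sided ideal (though only the right-ideal property is needed).
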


\begin{proof}
A group $M$ only has two right ideals; that is, $M$ itself and the empty set. Consequently, there are exactly two Grothendieck topologies on $M$: the trivial topology and the maximal topology.

Conversely, if $M$ is not a group, let $N \subseteq M$ be the subset consisting of elements $m$ which are not right invertible. It is easy to check that $N$ is a nonempty proper right ideal of $M$. Furthermore, it is contained in the dense topology $J_d$ on $M$, which by definition is the set of right ideals $S$ of $M$ satisfying the following condition: for any $m \in M$, there is a certain $n \in M$ such that $mn \in S$. Indeed, given $m \in M$, if $m$ is not right invertible, then $m \cdot 1 \in N$; if $m$ is right invertible, by taking a right inverse $n \in M$ of $m$ and a certain $l \in N$, then $m (nl) = l \in N$.

Note that $J_d$ is different from the maximal topology since it does not contain the empty set. It is also different from the trivial topology since it contains proper subset $N$ of $M$. Therefore, there are at least three different Grothendieck topologies on $M$.
\end{proof}

\begin{remark} \label{finite monoid}
The assumption that $M$ is finite is essential to guarantee that $N$ is nonempty. Otherwise, let $M$ be the set of all surjections from an infinite set to itself. Clearly, $M$ is not a group, but every element in $M$ is right invertible. Thus the set $N$ is empty.
\end{remark}

Now we describe two technical lemmas playing a key role for proving the necessary direction of Theorem \ref{rigid topologies}.

\begin{lemma} \label{topologies on coideals}
Let $J$ be a Grothendieck topology on $\Co$ and $\D$ an ideal of $\C$. Then the restriction of $J$ on $\D^{\op}$ given by
\[
J_{\D} (x) = \{S \cap \D(x, -) \mid S \in J(x) \}
\]
for $x \in \Ob(\D)$ defines a Grothendieck topology on $\D^{\op}$. Moreover, every Grothendieck topology on $\D^{\op}$ can be obtained in this way. Consequently, if every Grothendieck topology $J$ on $\Co$ is rigid, so is every Grothendieck topology on $\D^{\op}$.
\end{lemma}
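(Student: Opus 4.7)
The plan is to exploit the key structural fact that an ideal $\D \subseteq \C$ captures all morphisms \emph{out of} its objects: for $x \in \Ob(\D)$ and any $y \in \Ob(\C)$, the existence of a morphism $f: x \to y$ forces $x \leqslant y$, and since $\D$ is an ideal, $y \in \Ob(\D)$. Consequently $\D(x, -) = \C(x, -)$ as functors for every $x \in \Ob(\D)$, so sieves on $x$ in $\D$ and in $\C$ literally coincide. The definition $J_\D(x) = \{S \cap \D(x, -) \mid S \in J(x)\}$ then simplifies to $J_\D(x) = J(x)$, and with this identification in hand, the three Grothendieck axioms for $J_\D$ transfer mechanically from those for $J$: maximality is immediate; stability uses that pullbacks along a morphism $f: x \to y$ with $x \in \Ob(\D)$ have codomain already in $\D$; and transitivity merely restates itself.

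For the second part, given a Grothendieck topology $K$ on $\D^{\op}$, I will extend it to a topology $J$ on $\Co$ by setting $J(x) = K(x)$ for $x \in \Ob(\D)$ and, for $x \notin \Ob(\D)$,
\[
J(x) = \{S \subseteq \C(x, -) \mid f^{\ast}(S) \in K(y) \text{ for every } f: x \to y \text{ with } y \in \Ob(\D)\}.
\]
Maximality and stability are routine bookkeeping. The essential check is transitivity at $x \notin \Ob(\D)$: given $S \in J(x)$ and a sieve $T$ on $x$ with $f^{\ast}(T) \in J(y)$ for all $(f: x \to y) \in S$, I will fix any $g: x \to z$ with $z \in \Ob(\D)$ and deduce $g^{\ast}(T) \in K(z)$ by combining (i) $g^{\ast}(S) \in K(z)$, coming from $S \in J(x)$, (ii) for every $h \in g^{\ast}(S)$ the identity $h^{\ast}(g^{\ast}(T)) = (hg)^{\ast}(T) \in K(w)$ (where the codomain $w$ lies in $\Ob(\D)$ since $\D$ is an ideal and $z \in \Ob(\D)$), and (iii) the transitivity axiom for $K$. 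By construction $J_\D = K$, so every Grothendieck topology on $\D^{\op}$ arises as a restriction.

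For the final consequence, assume every Grothendieck topology on $\Co$ is rigid and take an arbitrary Grothendieck topology $K$ on $\D^{\op}$; extend it to $J$ as above. The key observation is that, for $y \in \Ob(\D)$, the sieve $S_y$ on $y$ in $\C$ associated to $J$ coincides with the sieve $S_y$ on $y$ in $\D$ associated to $K$. Every morphism out of $y$ in $\C$ already lies in $\D$, so $\C(y, x) = \emptyset$ for $x \notin \Ob(\D)$, ruling out any contribution to $S_y$ from $J$-irreducibles outside $\D$; and the $J$-irreducibles inside $\D$ are precisely the $K$-irreducibles, because $J(x) = K(x)$ and $\C(x, -) = \D(x, -)$ for $x \in \Ob(\D)$. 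Rigidity of $J$ then gives $S_y \in J(y) = K(y)$, so $K$ is rigid at $y$.

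The most delicate step I expect is the transitivity verification for the extension $J$ at objects $x \notin \Ob(\D)$, because one has to juggle sieves at three different objects ($x$, $z$, $w$) and invoke the ideal property of $\D$ at the correct moment to keep the argument inside $K$. Once that is done, the remainder of the proof is bookkeeping and a clean identification of rigid sieves across $\C$ and $\D$.
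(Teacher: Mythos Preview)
Your proof is correct, and for the first and third parts it matches the paper almost exactly: both rely on the observation that $\D(x,-)=\C(x,-)$ for $x\in\Ob(\D)$, so $J_\D(x)=J(x)$ and $J$-irreducibles in $\D$ coincide with $K$-irreducibles.

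The genuine difference is in the second part. The paper extends $K$ to $\Co$ in the simplest possible way: set $J(x)=K(x)$ for $x\in\Ob(\D)$ and $J(y)=\{\C(y,-)\}$ (the trivial topology) for $y\notin\Ob(\D)$. With this choice the three axioms are immediate, since nothing interesting happens outside $\D$. Your extension instead declares $S\in J(x)$ for $x\notin\Ob(\D)$ exactly when every pullback of $S$ into $\D$ is a $K$-cover. This is a perfectly valid Grothendieck topology and your transitivity argument via $g^\ast(S)\in K(z)$ and $(hg)^\ast(T)\in K(w)$ is sound, but it is the ``delicate step'' you flagged precisely because you chose an extension that actually has content outside $\D$. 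The paper's choice sidesteps this work entirely. What your extension buys is nothing here, since any extension with $J_\D=K$ suffices; the paper simply picks the cheapest one.
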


\begin{proof}
Since $\D$ is an ideal of $\C$, we have $\C(x, y) = \emptyset$ for $x \in \Ob(\D)$ and $y \notin \Ob(\D)$. Consequently, as sets of morphisms, we have $S \cap \D(x, -) = S$ for $S \in J(x)$ and $x \in \Ob(\D)$. With this observation, it is easy to check that $J_{\D^{\op}}$ is indeed a Grothendieck topology on $\D^{\op}$.

Conversely, given a Grothendieck topology $K$ on $\D^{\op}$, we define a rule $J$ for $\Co$ by setting $J(x) = K(x)$ for $x \in \Ob(\D)$ and $J(y) = \{ \C(y, -) \}$ for $y \in \Ob(\C) \setminus \Ob(\D)$. It is routine to check that $J$ is a Grothendieck topology on $\Co$, and furthermore, its restriction on $\D^{\op}$ is precisely $K$.

To establish the last statement, we need to show that the restriction $J_{\D^{\op}}$ is a rigid Grothendieck topology on $\D^{\op}$ whenever $J$ is a rigid Grothendieck topology on $\Co$. But this is clear. Indeed, since $J$ is rigid, for every $x \in \Ob(\D)$, morphisms in $\C$ from $x$ to $J$-irreducible objects generate a covering sieve $S_x \in J(x)$. Since $\D$ is an ideal of $\C$, as explained before, $J_{\D^{\op}}$-irreducible objects are precisely $J$-irreducible objects contained in $\Ob(\D)$, and only these objects contribute to $S_x$. Consequently, $S_x = S_x \cap \D(x, -) \in J_{\D^{\op}}(x)$ is generated by morphisms in $\D$ from $x$ to $J_{\D^{\op}}$-irreducible objects, so $J_{\D^{\op}}$ is rigid.
\end{proof}

The second lemma gives a dual result: if every Grothendieck topology on $\Co$ is rigid, then so is every Grothendieck topology on $\mathscr{E}^{\op}$ where $\mathscr{E}$ is a coideal of $\C$. To prove this result, we need to introduce a slightly more complicated construction.

Let $\mathscr{E}$ be a coideal of $\C$, and let $\D$ be the full subcategory of $\C$ consisting of those objects not contained in $\Ob(\mathscr{E})$. It is easy to check that $\D$ is an ideal of $\C$. Let $K$ be the maximal topology on $\D^{\op}$ and $L$ an arbitrary Grothendieck topology on $\mathscr{E}^{\op}$. We define a rule $J$ on $\Co$ as follows:
\[
J(x) = \begin{cases}
K(x), & x \in \Ob(\D);\\
\{ S \subseteq \C(x, -) \mid S \cap \mathscr{E}(x, -) \in L(x) \}, & x \in \Ob(\mathscr{E}).
\end{cases}
\]
We point out that $S \subseteq \C(x, -)$ is contained in $J(x)$ if and only if there is a certain $S' \in L(x)$ such that $S' \subseteq S$. The only if direction is trivial by taking $S' = S \cap \mathscr{E}(x, -)$. The if direction is also clear: $S' \subseteq S$ implies that $S \cap \mathscr{E}(x, -) \supseteq S'$, so $S \cap \mathscr{E}(x, -)$ is contained in $L(x)$.

\begin{lemma} \label{topologies on ideals}
The rule $J$ defined above is a Grothendieck topology on $\Co$. Furthermore, if all Grothendieck topologies on $\Co$ are rigid, then so is every Grothendieck topology on $\mathscr{E}^{\op}$.
\end{lemma}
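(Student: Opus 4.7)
The plan is to handle the two assertions separately: first verify that $J$ satisfies the three Grothendieck axioms, and then deduce rigidity of $L$ from rigidity of $J$. A single computation drives the axiom-checking: because $\D$ is an ideal of $\C$, a morphism $f: x \to y$ with $x, y \in \Ob(\mathscr{E})$ has the property that for any sieve $S \subseteq \C(x,-)$,
\[
f^{\ast}(S) \cap \mathscr{E}(y,-) \;=\; f^{\ast}\!\bigl(S \cap \mathscr{E}(x,-)\bigr),
\]
where the right-hand pullback is taken inside $\mathscr{E}^{\op}$. This holds because the codomain of any $g \in \mathscr{E}(y,-)$ lies in $\mathscr{E}$, so $gf \in S$ is equivalent to $gf \in S \cap \mathscr{E}(x,-)$. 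I would record this identity first and use it throughout.

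For the maximal axiom I would use $\C(x,-) \cap \mathscr{E}(x,-) = \mathscr{E}(x,-) \in L(x)$ when $x \in \Ob(\mathscr{E})$, while $J = K$ is maximal on $\Ob(\D)$. For stability, the cases with $x$ or $y$ in $\Ob(\D)$ collapse trivially to the maximal topology $K$, after noting that any morphism out of $\Ob(\D)$ must land in $\Ob(\D)$ since $\D$ is an ideal; the remaining case $x, y \in \Ob(\mathscr{E})$ follows from the displayed identity together with stability of $L$. Transitivity is handled the same way: reduce to $x \in \Ob(\mathscr{E})$, restrict attention to $f \in S \cap \mathscr{E}(x,-)$, use the identity to rewrite $f^{\ast}(T) \in J(y)$ as $f^{\ast}(T \cap \mathscr{E}(x,-)) \in L(y)$, and invoke transitivity of $L$.

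The heart of the rigidity argument, and the step I expect to be the main obstacle, is the following precise identification: an object of $\C$ is $J$-irreducible if and only if it lies in $\Ob(\mathscr{E})$ and is $L$-irreducible. No object of $\D$ can be $J$-irreducible because $K(x)$ contains both $\emptyset$ and $\C(x,-)$. For $x \in \Ob(\mathscr{E})$, $L$-irreducibility at $x$ forces every $S \in J(x)$ to contain $\mathscr{E}(x,-) \ni 1_x$, hence to equal $\C(x,-)$. Conversely, given $J$-irreducibility at $x$ and $T \in L(x)$, I would consider the sieve $\widetilde{T} \subseteq \C(x,-)$ generated by $T$ in $\C^{\op}$; exploiting that $T$ is already closed under post-composition in $\mathscr{E}$, one checks $\widetilde{T} \cap \mathscr{E}(x,-) = T$, so $\widetilde{T} \in J(x) = \{\C(x,-)\}$, which forces $T = \mathscr{E}(x,-)$.

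With this characterization in hand the conclusion is formal. For $y \in \Ob(\mathscr{E})$, every $J$-irreducible lies in $\Ob(\mathscr{E})$, so every morphism from $y$ to such an object is a morphism in $\mathscr{E}$, and post-composition with $g: x \to z$ in $\C$ lands in $\mathscr{E}(y,-)$ precisely when $z \in \Ob(\mathscr{E})$. Consequently
\[
S_y^{J} \cap \mathscr{E}(y,-) \;=\; \bigcup_{x\ L\text{-irreducible}} \mathscr{E}(x,-)\circ \mathscr{E}(y,x) \;=\; S_y^{L},
\]
where $S_y^{J}$ and $S_y^{L}$ denote the sieves of Definition~\ref{def of rigid topologies} associated to $J$ and $L$. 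Rigidity of $J$ gives $S_y^{J} \in J(y)$, which by the definition of $J$ is the statement $S_y^{L} \in L(y)$. Hence $L$ is rigid.
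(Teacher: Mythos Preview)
Your proposal is correct and follows essentially the same route as the paper: verifying the three axioms by reducing to $L$ via the intersection-with-$\mathscr{E}$ identity, identifying $J$-irreducibles with $L$-irreducibles in $\mathscr{E}$, and then reading off $S_y^L = S_y^J \cap \mathscr{E}(y,-) \in L(y)$ from rigidity of $J$. The paper organizes things slightly differently (it first records the identity $L(x) = \{S \cap \mathscr{E}(x,-) \mid S \in J(x)\}$ rather than your pullback identity, and asserts the irreducibility equivalence with less detail), but the substance is the same.
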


\begin{proof}
The maximal axiom clearly holds. For the stability axiom, let $S \in J(x)$ be a covering sieve, and $f: x \to y$ be a morphism in $\C$. If $y$ is an object in $\D$, then $f^{\ast} (S) \subseteq \C(y, -)$ is contained in $K(y) = J(y)$ since $K$ is the maximal topology on $\D^{\op}$ and $\D(y, -) = \C(y, -)$ as $\D$ is an ideal of $\C$. If $y$ is contained in $\mathscr{E}$, so is $x$ because $\mathscr{E}$ is a coideal. Since $S \cap \mathscr{E}(x, -) \in L(x)$ by definition of $J$, in this case
\[
f^{\ast}(S) \cap \mathscr{E}(y, -) = f^{\ast}(S) \cap f^{\ast}(\mathscr{E}(x, -)) = f^{\ast} (S \cap \mathscr{E}(x, -))
\]
is contained in $L(y)$ by the stability axiom on $L$, and hence $f^{\ast}(S) \in J(x)$ as desired.

To check the transitivity condition, take $T \subseteq \C(x, -)$, $S \in J(x)$ such that $f^{\ast}(T) \in J(y)$ for each $f: x \to y$ in $S$. If $x$ is contained in $\mathscr{D}$, then clearly $T \in J(x)$. Otherwise, one has
\[
T \supseteq \bigcup_{f \in S} f^{\ast}(T) \circ f \supseteq \bigcup_{f \in S \cap \mathscr{E}(x, -)} f^{\ast}(T) \circ f \supseteq \bigcup_{f: x \to y \atop f \in S \cap \mathscr{E}(x, -)} (f^{\ast}(T) \cap \mathscr{E}(y, -)) \circ f.
\]
Denote the right side of the above formula by $T'$, which is a subfunctor of $\mathscr{E}(x, -)$ since $x \in \Ob(\mathscr{E})$. Note that $S \cap \mathscr{E}(x, -) \in L(x)$ and $f^{\ast}(T) \cap \mathscr{E}(y, -) \in L(y)$ by definition of $J$, and $f^{\ast}(T) \cap \mathscr{E}(y, -)$ is a subset of $f^{\ast} (T')$. It follows that $f^{\ast} (T') \in L(y)$ for each $f \in S \cap \mathscr{E}(x, -)$. Applying the transitivity axiom for $L$ to $S \cap \mathscr{E}(x, -) \in L(x)$ and $T' \subseteq \mathscr{E}(x, -)$, we deduce that $T' \in L(x)$. Consequently, $T \in J(x)$ since it contains the $L$-covering sieve $T'$.

Now we prove the second statement. Let $L$ be a Grothendieck topology on $\mathscr{E}^{\op}$ and construct a Grothendieck topology $J$ on $\Co$ as above. Note that for every $x \in \Ob(\mathscr{E})$ one has
\[
L(x) = \{ S \cap \mathscr{E}(x, -) \mid S \in J(x) \}.
\]
Indeed, given a covering sieve $T \in L(x)$ in $\mathscr{E}^{\op}$, it generates a sieve $S \subseteq \C(x, -)$ in $\Co$, which clearly belongs to $J(x)$ since $S \supseteq T$. It is easy to check that $S \cap \mathscr{E}(x, -) = T$, so the left side of the above identity is a subset of the right side. The inclusion of the other direction follows from the construction of $J$.

Since $J$ is rigid, for every $x \in \Ob(\C)$, morphisms in $\C$ from $x$ to $J$-irreducible objects form a covering sieve $S_x \in J(x)$. Note that by our construction of $J$, every object contained in $\D$ is not $J$-irreducible since $K$ is the maximal topology on $\D$. Therefore, $S_x$ is generated by morphisms in $\C$ from $x$ to $J$-irreducible objects contained in $\mathscr{E}$, which coincide with $L$-irreducible objects. Consequently, for $x \in \Ob(\mathscr{E})$, the covering sieve $(S_x \cap \mathscr{E}(-, x)) \in L(x)$ is generated by morphisms in $\mathscr{E}$ from $x$ to $L$-irreducible objects. Therefore, $L$ is rigid.
\end{proof}

We are ready to prove the necessary direction of Theorem \ref{rigid topologies}.

\begin{proposition}
Let $\C$ be a directed category such that every $\C(x, x)$ is finite. If every Grothendieck topology $J$ on $\Co$ is rigid, then $\C$ is a noetherian EI category.
\end{proposition}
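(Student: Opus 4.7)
The plan is to prove the two conditions---EI and noetherian---separately, in each case by assuming the condition fails and exhibiting a non-rigid Grothendieck topology on $\Co$, with the aid of Lemmas \ref{topologies on coideals} and \ref{topologies on ideals}.

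For the EI property, I would fix $x \in \Ob(\C)$ and first pass to the coideal $\mathscr{E}_x = \{y \in \Ob(\C) : y \leqslant x\}$ of $\C$. By Lemma \ref{topologies on ideals}, every Grothendieck topology on $\mathscr{E}_x^{\op}$ is rigid. Inside $\mathscr{E}_x$ the singleton $\{x\}$ is an ideal, since any $z \in \mathscr{E}_x$ with $x \leqslant z$ satisfies both $z \leqslant x$ and $x \leqslant z$, forcing $z = x$ by antisymmetry; Lemma \ref{topologies on coideals} then implies that every Grothendieck topology on $\{x\}^{\op}$ is rigid. A brief inspection of a one-object category shows that only the trivial and the maximal topologies are rigid there: if the unique object is not $J$-irreducible, the sieve $S_x$ generated by morphisms to $J$-irreducibles is empty, so rigidity forces $\emptyset \in J(x)$, whence closure under inclusions collapses $J$ to the maximal topology. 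Thus the endomorphism monoid $M = \C(x,x)$ admits at most two Grothendieck topologies, and Lemma \ref{topologies on monoids} (applicable because $M$ is finite) forces $M$ to be a group.

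For the noetherian property, I would assume toward a contradiction a strictly ascending infinite chain $x_0 < x_1 < x_2 < \ldots$ in $(\Ob(\C), \leqslant)$, and set $\mathscr{E} = \{y : y \leqslant x_i \text{ for some } i\}$, a coideal of $\C$; Lemma \ref{topologies on ideals} again transfers rigidity to every Grothendieck topology on $\mathscr{E}^{\op}$. The contradiction will come from the dense topology $J_d$ on $\mathscr{E}^{\op}$. For each $y \in \mathscr{E}$ put $i_y = \min\{i : y \leqslant x_i\}$ and let $T_y \subseteq \mathscr{E}(y,-)$ be the sieve generated by the morphisms $y \to x_k$ for $k \geqslant i_y + 1$. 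For any $f: y \to z$ in $\mathscr{E}$, some $k \geqslant \max(i_z, i_y + 1)$ admits a morphism $g: z \to x_k$, and $gf: y \to x_k$ lies in $T_y$, so $T_y \in J_d(y)$. On the other hand, $T_y$ is proper because $1_y \notin T_y$: a factorization $1_y = hf$ with $f: y \to x_k$, $h: x_k \to y$, and $k \geqslant i_y + 1$ would yield $y \leqslant x_k$ and $x_k \leqslant y$, forcing $y = x_k$ by directedness and contradicting $x_k > y$. Consequently no $y \in \mathscr{E}$ is $J_d$-irreducible, so $S_y = \emptyset$ for every $y$; but $\emptyset \notin J_d(y)$, as the dense condition cannot be satisfied vacuously, and $J_d$ fails to be rigid.

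The main obstacle will be the construction of $T_y$ and the verification that it is a proper dense sieve. The non-trivial point is that $1_y \notin T_y$, and it is exactly here that both the antisymmetry of $\leqslant$ (reflecting directedness of $\C$) and the infinitude of the chain (providing some $x_k$ strictly above $y$) enter essentially. The reduction lemmas then perform the structural work, in the EI case localizing to a single finite endomorphism monoid, and in the noetherian case to the coideal generated by an infinite chain, where the dense topology serves as a canonical witness to non-rigidity.
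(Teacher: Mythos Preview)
Your proposal is correct and follows essentially the same approach as the paper: reduce to a single object via the two restriction lemmas and invoke Lemma~\ref{topologies on monoids} for the EI property, then exhibit the dense topology on a suitable coideal as a non-rigid witness for the noetherian property. The only cosmetic differences are that you apply the coideal and ideal reductions in the opposite order for the EI part, and for the noetherian part you work in the coideal generated by a single ascending chain (with a sieve $T_y$ built from the $x_k$), whereas the paper uses the full coideal $\C_{uu}$ of upward-unbounded objects and the simpler sieve $T_x = \bigsqcup_{y \neq x} \C_{uu}(x,y)$; both choices work for the same reason.
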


\begin{proof}
We first show that $\C$ is an EI category; that is, for every $x \in \Ob(\C)$, the finite monoid $M_x = \C(x, x)$ is a group. Let $\D$ be the full subcategory of $\C$ consisting of objects $y$ with $y \geqslant x$. Clearly, $\D$ is an ideal of $\C$. By Lemma \ref{topologies on coideals}, every Grothendieck topology on $\D^{\op}$ shall be rigid. Now let $\mathscr{E}$ be the full subcategory of $\D$ consisting of the single object $x$. Then $\mathscr{E}$ is a coideal of $\D$. By Lemma \ref{topologies on ideals}, every Grothendieck topology on $\mathscr{E}^{\op}$ shall be rigid as well. By Lemma \ref{topologies on monoids}, the finite monoid $M_x^{\op}$ is a group, so is $M_x$. Therefore, $\C$ is an EI category.

Next we show that $\C$ is noetherian by contradiction. If this is not true, then the full subcategory $\C_{uu}$ consisting of upward unbounded objects is nonempty, and is a coideal of $\C$ by the dual version of Lemma \ref{combinatorics}. By Lemma \ref{topologies on ideals}, every Grothendieck topology on $\C_{uu}^{\op}$ shall be rigid. We claim that the dense topology $J_d$ on $\C_{uu}^{\op}$ is not rigid, so the conclusion follows by this contradiction.

Given $x \in \Ob(\C_{du})$, define
\[
T_x = \bigsqcup_{y \in \Ob(\C_{uu}) \atop y \neq x} \C(x, y),
\]
a proper subfunctor of $\C_{uu}(x, -)$. Since $x$ is upward unbounded, there are infinitely many objects $y \in \Ob(\C_{uu})$ such that $\C_{uu}(x, y)$ is nonempty, so $T_x$ is nonempty. Moreover, it is easy to see that $T_x \in J_d(x)$. Therefore, $x$ is not $J_d$-irreducible, so no object in $\C_{uu}$ is $J_d$-irreducible. Consequently, $J_d$ is not rigid because the subcategory topology $J$ on $\C_{uu}$ such that no object is $J$-irreducible is the maximal topology, which is different from $J_d$.
\end{proof}

\begin{remark} \label{locally finite}
The assumption that every $\C(x, x)$ is a finite set is only used in the proof of this proposition such that Lemma \ref{topologies on monoids} can be applied.
\end{remark}

\subsection{Sufficient direction}

In this subsection we prove the sufficient direction of Theorem \ref{rigid topologies}. Throughout this subsection let $\C$ be a noetherian EI category unless otherwise specified. We firstly show that for every Grothendieck topology $J$ on $\Co$ and every $x \in \Ob(\C)$, $J(x)$ has a unique minimal element: the intersection of all $S \in J(x)$.

\begin{lemma} \label{intersection}
Every Grothendieck topology $J$ on $\Co$ is closed under arbitrary intersections.
\end{lemma}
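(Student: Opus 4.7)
The plan is to proceed by noetherian induction on $x \in \Ob(\C)$, using as the key algebraic fact that in an EI category every proper sieve on $x$ is disjoint from $\C(x, x)$, so that the non-trivial part of a covering sieve consists only of morphisms with strictly larger codomain.

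For the base case, suppose $x$ is maximal in $(\Ob(\C), \leqslant)$. Directedness forces every morphism out of $x$ to be an endomorphism, and since $\C$ is EI the monoid $\C(x, x)$ is a group. A sieve $S \subseteq \C(x, x)$ must satisfy $gS \subseteq S$ for every $g \in \C(x, x)$; if $f \in S$ then $\C(x, x) = \C(x, x) \cdot f \cdot f^{-1} \subseteq S \cdot f^{-1}$ forces $S = \C(x, x)$. Hence $J(x) \subseteq \{\emptyset, \C(x, -)\}$ and closure under intersections is immediate.

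For the inductive step, fix $x$ and assume $J(y)$ is closed under arbitrary intersections for every $y > x$. Let $\{S_i\}_{i \in I} \subseteq J(x)$ and put $S = \bigcap_{i \in I} S_i$. Discarding any $S_i$ equal to $\C(x, -)$ leaves $S$ unchanged; if all are discarded then $S = \C(x, -) \in J(x)$ and we are done, so we may assume every $S_i$ is a proper sieve. The key observation is that a proper sieve $T$ on $x$ satisfies $T \cap \C(x, x) = \emptyset$: if $f \in T \cap \C(x, x)$ then $f$ is an automorphism by the EI property, and for any $g \colon x \to z$ the factorisation $g = (g \circ f^{-1}) \circ f$ together with the sieve condition forces $g \in T$, contradicting properness. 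Consequently every morphism in each $S_i$ has codomain strictly greater than $x$. Choose any $S_1$ from the family. For each $f \colon x \to y$ in $S_1$ we have $y > x$, so the inductive hypothesis applies to $y$; by the stability axiom $f^{\ast}(S_i) \in J(y)$ for every $i$, and therefore
\[
f^{\ast}(S) = f^{\ast}\Bigl( \bigcap_{i \in I} S_i \Bigr) = \bigcap_{i \in I} f^{\ast}(S_i) \in J(y).
\]
Applying the transitivity axiom to the covering sieve $S_1 \in J(x)$ and the left ideal $S \subseteq \C(x, -)$ yields $S \in J(x)$, completing the induction.

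The main obstacle is the potential circularity created by endomorphisms of $x$: if some $S_i$ contained a morphism $f \colon x \to x$, then pulling back $S$ along $f$ would land us in $J(x)$ again, which is precisely what we are trying to prove. The EI hypothesis combined with the sieve-closure argument above is exactly what eliminates this possibility, guaranteeing that every morphism used to invoke transitivity points to a strictly larger object, where the inductive hypothesis is available.
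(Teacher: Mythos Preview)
Your proof is correct and follows essentially the same approach as the paper: both rely on the key fact that in an EI category a proper sieve on $x$ contains no endomorphisms, so every morphism in it points to a strictly larger object, and both then combine stability with transitivity along such a sieve. The only cosmetic difference is packaging: the paper argues by contradiction via a maximal object in the set of counterexamples, whereas you run the equivalent direct noetherian induction and work with an arbitrary family $\{S_i\}$ rather than the full intersection $\bigcap_{S \in J(x)} S$ (the two formulations being equivalent since $J$ is closed under inclusions).
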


\begin{proof}
Since $J$ is closed under inclusions, it suffices to show
\[
\bigcap_{S \in J(x)} S \in J(x)
\]
for all $x \in \Ob(\C)$. If the conclusion is false, then the set $O$ consisting of objects $x$ such that the above formula fails is not empty. Since $\Ob(\C)$ is a noetherian poset, we can find a maximal object $x$ in $O$. We deduce that:
\begin{itemize}
\item $\emptyset \notin J(x)$, since otherwise the formula holds;

\item $x$ is not a maximal object in $\C$, since otherwise $\C(x, -)$ has only two subfunctors, namely itself and the empty functor, and the formula also holds;

\item the cardinality $|J(x)|$ must be infinity since Grothendieck topologies are closed under finite intersections.
\end{itemize}

Take an arbitrary $T$ in $J(x)$. We may assume that $T$ is different from $\C(x, -)$ (this is possible since $J(x)$ is an infinite set). By the transitivity axiom, there must exist a certain $f: x \to y$ in $T$ such that
\[
f^{\ast} (\bigcap_{S \in J(x)} S) = \bigcap_{S \in J(x)} f^{\ast}(S) \notin J(y).
\]
But clearly we have
\[
\bigcap_{S \in J(x)} f^{\ast}(S) \supseteq \bigcap_{T \in J(y)} T.
\]
This forces
\[
\bigcap_{T \in J(y)} T \notin J(y),
\]
that is, $y \in O$ as well. However, because $\C$ is a skeletal EI category, $f$ cannot be an endomorphism since otherwise $f$ is an automorphism and hence $T = \C(x, -)$. So, we have $x < y$, contradicting the assumption that $x$ is maximal in $O$. The conclusion then follows by contradiction.
\end{proof}

By this lemma, a Grothendieck topology $J$ on $\Co$ determines a family of minimal covering sieves each of which belongs to $J(x)$ for a distinct $x \in \Ob(\C)$. Now we consider the converse procedure, generalizing the idea in \cite{Mur} to construct Grothendieck topologies on $\C^{\op}$ by giving a family of minimal covering sieves satisfying a special requirement. We assign to each object $x$ in $\C$ a subfunctor $S_x \subseteq \C(x, -)$, and call this assignment \textit{consistent} if the following condition holds: for every $x \in \Ob(\C)$, either $S_x = \C(x, -)$ or
\begin{equation} \label{consistent}
S_x = \bigcup_{x \neq y \in \Ob(\C) \atop f \in \C(x, y)} S_yf = \bigcup_{x \neq y \in \Ob(\C)} S_y \circ \C(x, y)
\end{equation}
where $S_yf = \{ gf \mid g \in S_y \}$. By convention, the above union is empty when the index is empty. Such a consistent family $\{ S_x \}_{x \in \Ob(\C)}$ determines a rule $J$ by
\[
J(x) = \{S \subseteq \C(x, -) \mid S \supseteq S_x \}.
\]

\begin{proposition} \label{classify tops}
The rule $J$ defined above is a Grothendieck topology on $\Co$, and this construction exhausts all Grothendieck topologies on $\Co$.
\end{proposition}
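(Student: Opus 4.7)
The plan is to establish the two directions in sequence: first, that a consistent family $\{S_x\}$ gives a Grothendieck topology $J$; second, that every Grothendieck topology on $\Co$ arises in this way. For the forward direction the maximal axiom is immediate since $S_x \subseteq \C(x,-)$. For stability, given $S \in J(x)$ (so $S \supseteq S_x$) and $f: x \to y$, it suffices to check $S_y \subseteq f^*(S_x)$. If $y \neq x$, this is exactly the inclusion $S_y f \subseteq S_x$ coming from (\ref{consistent}). If $y = x$, the EI property forces $f$ to be an automorphism; either $S_x = \C(x,-)$ and the inclusion is trivial, or (\ref{consistent}) applies and any $g \in S_x$ decomposes as $g' h$ with $h: x \to z \neq x$ and $g' \in S_z$, giving $gf = g'(hf) \in S_z \circ \C(x, z) \subseteq S_x$.

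For the transitivity axiom I would proceed by noetherian induction on $\Ob(\C)$: to establish transitivity at $x$, assume it at every $y > x$. The base case of a maximal $x$ is trivial, since directness forces $\C(x, z) = \emptyset$ for $z \neq x$, so (\ref{consistent}) yields $S_x \in \{\emptyset, \C(x,-)\}$ and both subcases reduce to a direct inspection. For the inductive step, suppose $S \in J(x)$ and $T \subseteq \C(x,-)$ satisfy $f^*(T) \in J(\cod(f))$ for every $f \in S$, and take $g \in S_x$. Assuming $S_x \neq \C(x,-)$, write $g = g_1 h_1$ with $h_1: x \to y_1 \neq x$ and $g_1 \in S_{y_1}$, and set $V = h_1^*(T) \subseteq \C(y_1, -)$. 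By the stability axiom already proved, $h_1^*(S) \in J(y_1)$; moreover, for each $k \in h_1^*(S)$ the morphism $h_1 k$ lies in $S$, so $k^*(V) = (h_1 k)^*(T) \in J(\cod(k))$. The inductive hypothesis at $y_1 > x$ then gives $V \supseteq S_{y_1}$, so $g_1 \in h_1^*(T)$ and hence $g = g_1 h_1 \in T$, as desired.

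For the converse direction, given a Grothendieck topology $J$, I would set $S_x = \bigcap_{S \in J(x)} S$; this lies in $J(x)$ by Lemma \ref{intersection}, and since Grothendieck topologies are closed under inclusions, $J(x) = \{S \subseteq \C(x,-) \mid S \supseteq S_x\}$, so the rule recovered from $\{S_x\}$ coincides with $J$. Consistency of $\{S_x\}$ must still be verified. The inclusion $S_x \supseteq S_y h$ for $h: x \to y \neq x$ follows from the stability axiom of $J$: any $g \in S_y \subseteq h^*(S_x)$ yields $gh \in S_x$. For the reverse inclusion when $S_x \neq \C(x,-)$, note that $S_x$ contains no automorphism of $x$ (else $1_x \in S_x$ forces $S_x = \C(x,-)$ as $S_x$ is a sieve), hence every $f \in S_x$ has codomain $z \neq x$; setting $T = \bigcup_{y \neq x,\, h} S_y h$, one checks $S_z \subseteq f^*(T)$, so $f^*(T) \in J(z)$, and applying the transitivity axiom of $J$ to $S_x$ gives $T \in J(x)$, whence $T \supseteq S_x$ by minimality.

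The main obstacle is the transitivity axiom in the forward direction, whose proof depends essentially on both hypotheses at once: the noetherian condition provides the well-founded order required for the induction on $\Ob(\C)$, while the EI property ensures that, when $S_x \neq \C(x,-)$, each element of $S_x$ has codomain strictly greater than $x$, so the induction can always descend to a strictly larger object.
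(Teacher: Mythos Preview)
Your argument is correct and follows essentially the same strategy as the paper: stability via $S_y f \subseteq S_x$, transitivity via the noetherian property (the paper builds an explicit terminating chain $x < x_1 < x_2 < \cdots$ while you package the same idea as noetherian induction on $\Ob(\C)$), and the converse via Lemma~\ref{intersection} together with minimality of $S_x$. One notational slip to fix: in your transitivity step the composites should read $k h_1$ and $k^*(V) = (k h_1)^*(T)$, not $h_1 k$, since $h_1\colon x \to y_1$ and $k\colon y_1 \to \bullet$; with that correction the inductive step goes through as written.
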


\begin{proof}
We show that $J$ is indeed a Grothendieck topology on $\C^{\op}$. The maximal axiom holds obviously. For the stability axiom, we observe that if $S \in J(x)$, then $S \supseteq S_x$, and for each morphism $f: x \to y$ in $\C$, one has
\[
f^{\ast}(S) \supseteq f^{\ast} (S_x) \supseteq f^{\ast} (S_yf) \supseteq S_y,
\]
so $f^{\ast}(S) \in J(y)$. It remains to check the transitivity axiom.

Take a subfunctor $S \subseteq \C(x, -)$ and $T \in J(x)$ such that $f^{\ast}(S) \in J(y)$ for each morphism $f: x \to y$ in $T$. We want to check that $S \in J(x)$; that is, $S_x \subseteq S$. If $S = \C(x, -)$, the conclusion holds trivially. Similarly, if $T$ contains an endomorphism on $x$, then $T = \C(x, -)$, so by taking $f$ to be the identity morphism on $x$ we have $S = f^{\ast} (S) \in J(x)$. Now suppose that both $S$ and $T$ do not contain any endomorphism on $x$. In this case it suffices to show the following claim:
\begin{center}
$(\S)$ each $\alpha \in S_x$ can be written as $\alpha = \beta \gamma$ with $\beta \in S_y$ and $\gamma \in T$.
\end{center}
\noindent Indeed, since for each $\gamma \in T$ one has $\gamma^{\ast} (S) \in J(y)$, namely $\gamma^{\ast} (S) \supseteq S_y$, we deduce that $S \supseteq S_yT$. Thus if the above claim is valid, then $\alpha \in S_yT \subseteq S$ for each $\alpha \in S_x$, so $S_x \subseteq S$ as desired.

Now we check the claim $(\S)$. Since $S \neq \C(x, -)$, by the consistent condition we can write $\alpha$ as a composite $\beta_1 \gamma_1$ with $\gamma_1 \in \C(x, x_1)$ for a certain object $x_1 \neq x$ and $\beta_1 \in S_{x_1}$. We have two cases:
\begin{itemize}
\item If $S_{x_1} = \C(x_1, -)$, then $\gamma_1 \in S_x \subseteq T$ by the consistent condition, so the claim holds.

\item Otherwise, equation (\ref{consistent}) holds for $S_{x_1}$. Thus one can replace $x$ by $x_1$ and write $\beta_1 = \beta_2 \gamma_2$ where $\gamma_2 \in \C(x_1, x_2)$ and $\beta_2 \in S_{x_2}$. Clearly, if $S_{x_2} = \C(x_2, -)$, then applying the argument as in the previous case, we deduce the claim.
\end{itemize}

Continuing the above procedure we obtain a sequence of objects $x < x_1 < x_2 < \ldots$ which must be finite since $\C$ is noetherian. Let $x_n$ be the last object, then either $S_{x_n} = \C(x_n, -)$ (and hence $(\S)$ holds) or $x_n$ is maximal with respect to $\leqslant$. For the second case, we have $\beta_n \in S_{x_n}$ is an automorphism since $\C(x_n, x_n)$ is assumed to be a group, so $S_{x_n} = \C(x_n, -)$ as well. Consequently, the claim $(\S)$ holds in both cases.

Now we check the second statement. Let $J$ be an arbitrary Grothendieck topology on $\Co$, and for every $x \in \Ob(\C)$ let $S_x$ be the intersection of all members in $J(x)$, which is contained in $J(x)$ by Lemma \ref{intersection}. It remains to show that the family $\{S_x \mid x \in \Ob(\C) \}$ is consistent; that is, equation (\ref{consistent}) holds provided that $S_x \neq \C(x, -)$. When $x$ is a maximal object, $\C(x, -)$ has only two subfunctors, namely itself and the empty functor. In this case it is easy to check the consistent condition.

Suppose that $x$ is not a maximal object. For each morphism $f: x \to y$ with $x \neq y$, one has $f^{\ast} (S_x) \in J(y)$, so $f^{\ast}(S_x) \supseteq S_y$, namely $S_x \supseteq S_y f$. Consequently, we obtain
\[
S_x \supseteq \bigcup_{x \neq y \in \Ob(\C) \atop f \in \C(x, y)} S_y f.
\]
To check the inclusion of the other direction, it suffices to show that the right side, denoted by $T$, is contained in $J(x)$. We prove it by the transitivity axiom. Indeed, for an arbitrary $g: x \to z$ in $S_x$, we have $z \neq x$ since $S_x \neq \C(x, -)$. Thus $S_z g \subseteq T$ and hence
\[
g^{\ast}(T) \supseteq g^{\ast} (S_z g) \supseteq S_z \in J(z).
\]
Since $J$ is closed under inclusions, we deduce that $g^{\ast} (T) \in J(z)$ as well. The transitivity axiom then tells us that $T \in J(x)$, as desired.
\end{proof}

The reader may guess that the Grothendieck topology $J$ on $\Co$ constructed before Proposition \ref{classify tops} is rigid, or equivalently, each $S_x$ in the consistent family is generated by morphisms in $\C$ from $x$ to $J$-irreducible objects. This is indeed the case by the following corollary, which establishes the sufficient direction of Theorem \ref{rigid topologies}.

\begin{corollary}
Every Grothendieck topology on $\Co$ is rigid.
\end{corollary}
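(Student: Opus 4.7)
The plan is to read this off directly from Proposition \ref{classify tops} and the chain decomposition already used in its proof. Given a Grothendieck topology $J$ on $\Co$, let $\{S_x\}_{x \in \Ob(\C)}$ be the consistent family of minimal covering sieves, so $S_x = \bigcap_{S \in J(x)} S$ by Lemma \ref{intersection}, and $J(x) = \{S \mid S \supseteq S_x\}$. Note that $y \in \Ob(\C)$ is $J$-irreducible precisely when $S_y = \C(y,-)$. Let $R_x \subseteq \C(x,-)$ denote the sieve generated by morphisms from $x$ into $J$-irreducible objects. To prove rigidity I need to show $R_x \in J(x)$, i.e., $S_x \subseteq R_x$ (the reverse inclusion being automatic).

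The first step is the easy inclusion $R_x \subseteq S_x$: for any morphism $f: x \to y$ with $y$ irreducible one has $S_y f = \C(y,-) f \subseteq S_x$ by the consistency equation \eqref{consistent}, so every generator of $R_x$ lies in $S_x$, hence $R_x \subseteq S_x$. This already shows that $R_x \in J(x)$ whenever $S_x \subseteq R_x$, so it suffices to verify the latter.

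For the containment $S_x \subseteq R_x$, I would take an arbitrary $\alpha \in S_x$ and run the same iterative factorization used in the claim $(\S)$ of Proposition \ref{classify tops}. If $S_x = \C(x,-)$ then $x$ is already $J$-irreducible, and $\alpha = \alpha \circ 1_x$ factors through $1_x: x \to x$ with $x$ irreducible. Otherwise, the consistency relation lets me write $\alpha = \beta_1 \gamma_1$ with $\gamma_1: x \to x_1$, $x_1 \neq x$, and $\beta_1 \in S_{x_1}$. If $S_{x_1} = \C(x_1,-)$ we are done; otherwise we iterate and write $\beta_1 = \beta_2 \gamma_2$ with $\gamma_2: x_1 \to x_2$, $x_2 \neq x_1$, and $\beta_2 \in S_{x_2}$, and so on. Since $\C$ is directed, the strict inequalities $x < x_1 < x_2 < \cdots$ hold, and by noetherianity this chain terminates at some $x_n$.

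The one point that needs care — and in my view the heart of the argument — is verifying that the terminal object $x_n$ is $J$-irreducible in either of the two ways the iteration can halt. If the iteration halts because $S_{x_n} = \C(x_n,-)$, there is nothing to check. If it halts because no further strict increase is possible, then every morphism in $S_{x_n}$ is an endomorphism of $x_n$; pick any $\beta_n \in S_{x_n}$. Since $\C$ is an EI category, $\beta_n$ is an automorphism of $x_n$, and because $S_{x_n}$ is a sieve, post-composing $\beta_n$ by $g \beta_n^{-1}$ for an arbitrary $g: x_n \to z$ shows $g \in S_{x_n}$; hence $S_{x_n} = \C(x_n,-)$ in this case too. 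In both cases $x_n$ is $J$-irreducible, and the composite $\gamma_n \circ \cdots \circ \gamma_1: x \to x_n$ is a morphism to an irreducible object through which $\alpha = \beta_n \gamma_n \cdots \gamma_1$ factors, so $\alpha \in R_x$. Thus $S_x \subseteq R_x$, and $J$ is rigid.
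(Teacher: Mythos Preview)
Your proof is correct. Both you and the paper reduce to showing $S_x = R_x$ (the paper writes $S'_x$ for your $R_x$), and the easy inclusion $R_x \subseteq S_x$ is handled the same way. For the hard inclusion $S_x \subseteq R_x$, however, the paper argues by transfinite induction on the filtration $\{O'_\chi\}$ from Remark \ref{filtration}: if $y \neq x$ appears in the consistency expansion of $S_x$ then $d(y) < d(x)$, so by induction $S_y \subseteq R_y$, and one checks $S_y \circ \C(x,y) \subseteq R_x$. You instead chase an individual element $\alpha \in S_x$ along the factorization chain from claim $(\S)$ in the proof of Proposition \ref{classify tops}. Your route is more elementary (no ordinals needed) and makes transparent that the corollary is really a repackaging of the combinatorics already isolated in $(\S)$; the paper's route, on the other hand, sets up a clean inductive framework that would adapt more readily if one wanted to prove stronger statements about the structure of the $S_x$.

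One minor remark: your case 2 is in fact subsumed by case 1. Whenever $S_{x_n} \neq \C(x_n,-)$, equation \eqref{consistent} itself furnishes a factorization of $\beta_n$ through some $x_{n+1} \neq x_n$, hence $x_{n+1} > x_n$; so the chain can only halt when $S_{x_n} = \C(x_n,-)$. Your treatment of case 2 is nevertheless correct, and it mirrors exactly what the paper does at the tail end of its own proof of $(\S)$.
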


\begin{proof}
Proposition \ref{classify tops} tells us that every Grothendieck topology on $\Co$ is determined by a consistent family. By comparing formula (\ref{consistent}) and Definition \ref{def of rigid topologies}, it suffices to show that each $S_x$ in formula (\ref{consistent}) coincides with the subfunctor $S'_x \subseteq \C(x, -)$ generated by all morphisms from $x$ to $J$-irreducible objects. When $S_x = \C(x, -)$, $x$ is $J$-irreducible, so this is trivially true.

Suppose that $x$ is not $J$-irreducible. In this case we have
\[
S_x = \bigcup_{x \neq y \in \Ob(\C) \atop f \in \C(x, y)} S_yf = \bigcup_{x \neq y \in \Ob(\C)} S_y \circ \C(x, y)
\]
and
\[
S_x' = \bigcup_{y \text{ irred.}} \C(y, -) \circ \C(x, y).
\]
Noth that $S_y = \C(y, -)$ when $y$ is $J$-irreducible, so $S_x' \subseteq S_x$. We prove the other inclusion $S_x \subseteq S_x'$ via a transfinite induction based on a filtration $\{O'_{\chi}\}$ of maximal objects specified in Remark \ref{filtration}. Explicitly, there is an increasing sequence $O_1' \subseteq O_2' \subseteq O_3' \subseteq \ldots$ of subsets of $\Ob(\C)$ satisfying the following conditions:
\begin{itemize}
\item there exists an ordinal $\zeta$ such that $\Ob(\C) = O_{\zeta}'$ since $\C$ is noetherian;

\item $\C(z, w) = \emptyset$ if $w \in O'_{\lambda +1} \setminus O_{\lambda}'$ and $z \in O'_{\lambda}$ for a certain ordinal $\lambda$;

\item for $x \in \Ob(\C)$, there is a minimal ordinal $d(x)$ such that $x \in O'_{d(x)}$.
\end{itemize}

If $x$ is maximal with respect to the partial order $\leqslant$, i.e., $d(x) = 1$, then
\[
S_x = \bigcup_{x \neq y \in \Ob(\C) \atop f \in \C(x, y)} S_yf = \emptyset
\]
since there is no morphism $f: x \to y$ with $x \neq y$. Clearly, $S_x \subseteq S_x'$.

Suppose that $d(x) = \lambda$. Since $x$ is not $J$-irreducible, one has
\[
S_x = (\bigcup_{x \neq y \atop y \text{ irred.}} S_y \circ \C(x, y)) \cup (\bigcup_{x \neq y \atop y \text{ not irred.}} S_y \circ \C(x, y)) \subseteq S_x' \cup (\bigcup_{x \neq y \atop y \text{ not irred.}} S_y \circ \C(x, y)).
\]
But $x \neq y$ and $\C(x, y) \neq \emptyset$ imply $d(y) < d(x)$. By the induction hypothesis, $S_y \subseteq S_y'$, and hence
\[
S_y \circ \C(x, y) \subseteq S_y' \circ \C(x, y) = \bigcup_{z \text{ irred.}} \C(z, -) \circ \C(y, z) \circ \C(x, y) \subseteq \bigcup_{z \text{ irred.}} \C(z, -) \circ \C(x, z) \subseteq S_x'
\]
when $y$ is not $J$-irreducible. Consequently, one has $S_x \subseteq S_x'$. The conclusion then follows from a transfinite induction.
\end{proof}

When $\C$ is a noetherian EI category, the following result gives a very transparent description for $J$-torsion $\mathcal{O}$-modules and categories of sheaves of modules.

\begin{theorem} \label{classify sheaves}
Let $\C$ be a noetherian EI category, $J$ a Grothendieck topology on $\Co$, $\mathcal{O}$ a structure sheaf on $\BC^{\op}$, and $\mathscr{D}$ the full subcategory of $\C$ consisting of $J$-irreducible objects. Then:
\begin{enumerate}
\item an $\mathcal{O}$-module $V$ lies in $\mathcal{T} (J)$ if and only if $V_x = 0$ for each $x \in \Ob(\D)$;

\item consequently, one has
\[
\Sh(\BC^{\op}, \, \mathcal{O}) \simeq \mathcal{O}_{\D} \Mod,
\]
where $\mathcal{O}_{\D}$ is the structure presheaf on $\D^{\op}$ obtained via restricting $\mathcal{O}$ to $\D$.
\end{enumerate}
\end{theorem}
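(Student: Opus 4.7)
The plan is to leverage two ingredients already established: first, the rigidity of every Grothendieck topology on $\Co$ (just proved for noetherian EI categories), which supplies for each $y \in \Ob(\C)$ a minimal covering sieve $S_y$ generated by morphisms in $\C$ from $y$ to $J$-irreducible objects; second, the description of torsion elements (Lemma \ref{torsion submodules}(3)), which says that under the finite intersection property every element of the torsion submodule is genuinely torsion. Part (1) will then be a direct computation, and part (2) will follow either from (1) via Corollary \ref{equivalence}, or, more cleanly, by invoking the Comparison Lemma (as in Corollary \ref{Theorem 4}).

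For part (1), in the forward direction, suppose $V \in \mathcal{T}(J)$ and take $x \in \Ob(\D)$. Since $x$ is $J$-irreducible, $J(x) = \{\C(x,-)\}$. By Lemma \ref{torsion submodules}(3) (Grothendieck topologies are closed under finite intersections), every $v \in V_x$ is $J$-torsion, so there exists $S \in J(x)$ with $S \cdot v = 0$; the only option is $S = \C(x,-)$, which forces $1_x \cdot v = v = 0$. Hence $V_x = 0$. For the reverse direction, assume $V_x = 0$ for all $x \in \Ob(\D)$, and take any $y \in \Ob(\C)$ and any $v \in V_y$. The rigid topology furnishes the minimal covering sieve
\[
S_y = \bigcup_{x \text{ $J$-irreducible}} \C(x,-) \circ \C(y,x) \in J(y).
\]
Every morphism in $S_y$ has the form $gf$ with $f: y \to x$, $x \in \Ob(\D)$, and $g: x \to \bullet$. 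Then $(gf) \cdot v = g \cdot (f \cdot v) = g \cdot 0 = 0$ since $f \cdot v \in V_x = 0$. Thus $S_y \cdot v = 0$, so $v$ is $J$-torsion. Consequently $V = \mathcal{T}_J(V)$, i.e., $V \in \mathcal{T}(J)$.

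For part (2), the cleanest route is the Comparison Lemma: the inclusion $\D \hookrightarrow \C$ induces a fully faithful functor of sites, and rigidity of $J$ says precisely that $\D$ is a dense subcategory in the sense required by \cite[C2.2, Theorem 2.2.3]{Jo}. Moreover, the induced topology on $\D^{\op}$ is the trivial one, because each $x \in \Ob(\D)$ is $J$-irreducible and so $J(x) \cap \D(x,-) = \{\D(x,-)\}$. Therefore sheaves on $(\D^{\op}, \mathcal{O}_\D)$ coincide with presheaves, i.e., with $\mathcal{O}_\D$-modules, giving the desired equivalence $\Sh(\BC^{\op}, \mathcal{O}) \simeq \mathcal{O}_\D \Mod$. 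Alternatively, one can combine (1) with Corollary \ref{equivalence}: the Serre subcategory $\mathcal{T}(J)$ is exactly the full subcategory of $\mathcal{O}$-modules vanishing on $\D$, and the quotient $\mathcal{O} \Mod / \mathcal{T}(J)$ is visibly equivalent to $\mathcal{O}_\D \Mod$ via the restriction functor (its left and right adjoints being induction and coinduction along $\D \hookrightarrow \C$). There is no real obstacle here; the one point requiring care is making sure that in the forward direction of (1) we exploit the minimality of the annihilating sieve on a $J$-irreducible object, rather than attempting to argue directly from an arbitrary sum-of-torsion-elements decomposition.
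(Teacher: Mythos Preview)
Your proposal is correct and follows the same overall strategy as the paper: rigidity for part (1), the Comparison Lemma for part (2). The one noteworthy difference is in the reverse direction of (1). You use directly the explicit form of the minimal covering sieve $S_y = \bigcup_{x \text{ irred.}} \C(x,-)\circ\C(y,x)$ (established in the Corollary immediately preceding this theorem), so that every morphism in $S_y$ visibly factors through some $x\in\Ob(\D)$ where $V_x=0$; this gives a one-line argument. The paper instead works with the recursive consistent condition $S_x=\bigcup_{x\neq y}S_y\circ\C(x,y)$ and runs a noetherian descent, factoring an arbitrary $g\in S_x$ step by step until it hits an irreducible object. Your route is shorter precisely because it cashes in the conclusion of that Corollary rather than replaying its inductive content. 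For part (2) the two arguments coincide: the paper also invokes \cite[C2.2, Theorem 2.2.3]{Jo} and identifies the Serre quotient $\mathcal{O}\Mod/\mathcal{T}(J)$ with $\mathcal{O}_\D\Mod$ via restriction and the right Kan extension.
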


\begin{proof}
(1): The only if direction is clear. For the other direction, take an $\mathcal{O}$-module $V$ such that $V_x = 0$ for every $x \in \Ob(\D)$, an arbitrary $x \in \Ob(\C)$ and an element $v \in V_x$, we want to show that $S_x \cdot v = 0$. If $S_x = \C(x, -)$, then $V_x = 0$ and hence $v = 0$. If $S_x = \emptyset$, then this trivially holds by our convention, so we may assume that $S_x \neq \emptyset$. By the consistent condition,
\[
S_x = \bigcup_{x \neq y \in \Ob(\C) \atop f \in \C(x, y)} S_y f.
\]

Take a morphism $(g: x \to z) \in S_x$ and write it as $g = g_1f$ with $f: x \to x_1$ and $g_1: x_1 \to z$ in $S_{x_1}$. Then $g \cdot v = g_1 \cdot (f \cdot v)$. Replace $v$ by $f \cdot v$, $x$ by $x_1$, $g$ by $g_1$ and repeat the above argument. This procedure ends with two cases:
\begin{itemize}
\item We obtain a certain $x_n \neq z$ such that $S_{x_n} = \C(x_n, -)$. In this case the conclusion holds since $V_{x_n} = 0$.

\item After finitely many steps we obtain $x_n = z$ since $\C$ is noetherian. In this case, $g_n$ is a morphism from $z$ to itself, and hence an automorphism. Since $S_z$ contains $g_n$, it coincides with $\C(z, -)$, and the conclusion also holds since $V_z = 0$.
\end{itemize}
This finishes the proof of the first statement.

(2): This is essentially \cite[C2.2, Theorem 2.2.3]{Jo}. For the convenience of the reader, we give a sketched proof. Given an $\mathcal{O}$-module $V$, its restriction to $\D$ is clearly an $\mathcal{O}_{\D}$-module. In this way we obtain an restriction functor $\res: \mathcal{O} \Mod \to \mathcal{O}_{\mathscr{D}} \Mod$. It has a right adjoint given by the right Kan extension described as follows: given an $\mathcal{O}_{\D}$-module $W$ and an object $x \in \Ob(\C)$,
\[
(\mathrm{Ran}_{\iota} W)_x = \varprojlim_{\iota (y) \leftarrow x} W_y,
\]
where $\iota: \D \to \C$ is the inclusion functor, and the limit is taken over the comma category $(\iota \downarrow \mathrm{const}_x)$. When $x \in \Ob(\D)$, it is not hard to see that $(\iota \downarrow \mathrm{const}_x)$ is a coslice category whose objects are morphisms in $\D$ ending at $x$. In this situation, the identity morphism on $x$ is the initial object of the coslice category. It follows that $(\mathrm{Ran}_{\iota} W)_x \cong W_x$ for $x \in \Ob(\D)$. Consequently, $\res \circ \mathrm{Ran}_{\iota}$ is isomorphic to the identity functor on $\mathcal{O}_{\D} \Mod$. By \cite[Proposition III.2.5]{Gab}, we conclude that
\[
\mathcal{O}_{\D} \Mod \simeq \mathcal{O} \Mod / \ker (\res) = \mathcal{O} \Mod / \mathcal{T}(J) \simeq \Sh(\BC^{\op}, \, \mathcal{O})
\]
since the kernel of $\res$ is precisely $\mathcal{T}(J)$ by (1).
\end{proof}

\begin{remark} \label{comparison lemma}
Note that the above result holds for any rigid Grothendieck topology $J$ on any small category $\C^{\op}$ by letting $\D$ be the full subcategory of $\C$ consisting of $J$-irreducible objects; see \cite[C2.2, Theorem 2.2.3]{Jo} and \cite[C2.2, Example 2.2.4(d)]{Jo}.
\end{remark}

\section{Grothendieck topologies on certain noetherian EI categories}

We have classified all Grothendieck topologies on $\C^{\op}$ when $\C$ is a noetherian EI category. The question becomes much more complicated for arbitrary EI categories, even under the assumption that $\C$ is an artinian EI categories. Throughout this section let $\C$ be an EI category satisfying the following conditions:
\begin{itemize}
\item objects in $\C$ are parameterized by $\N$, so by abuse of notation we denote them by $n$, $n \in \N$;

\item $\C(m, n) \neq \emptyset$ if and only if $m \leqslant n$;

\item $\C(n, n)$ acts transitively on $\C(m, n)$ for every pair of objects $m$ and $n$;

\item $\C$ is a graded category equipped with a rank function $\deg: \mathrm{Mor}(\C) \to \N$ such that $\deg(f) = n - m$ for every morphism $f: m \to n$;

\item $\mathrm{Mor}(\C)$ is generated in degrees 0 and 1, namely every $f: m \to n$ with $n > m$ can be written as a composite of $n - m$ morphisms of degree 1.
\end{itemize}
We call them \textit{EI categories of type $\N$}. Typical examples include the following categories:
\begin{itemize}
\item the posets $\N$,

\item skeletal full subcategories of the category $\FI$ of finite sets and injections,

\item skeletal full subcategories of the category $\mathrm{VI}(\mathbbm{k})$ of finite dimensional vector spaces over a field $\mathbbm{k}$ and linear injections,

\item skeletal full subcategories of the category of finite cyclic $p$-groups and injective homomorphisms, where $p$ is a prime.
\end{itemize}
It is clear that $\C$ is an artinian EI category. Our goal is to classify Grothendieck topologies on $\Co$.

\begin{lemma}
For $m \in \N$, every nonempty subfunctor $S \subseteq \C(m, -)$ is principal; that is, $S$ is generated by a single morphism $f: m \to n$ in $\C$.
\end{lemma}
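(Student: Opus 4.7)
The plan is to pick a morphism in $S$ whose codomain has minimal index, and show that this morphism generates $S$. Concretely, since $S$ is nonempty and morphisms in $\C$ have well-defined nonnegative degrees, I would choose $f : m \to n$ in $S$ with $n$ minimal (equivalently, $\deg(f) = n-m$ minimal). The goal is then to verify that $S = \C(n,-) \circ f$.

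The first step is to upgrade $f$ to all of $\C(m,n)$. By the transitivity of the $\C(n,n)$-action on $\C(m,n)$, every morphism $m \to n$ in $\C$ has the form $\sigma \circ f$ with $\sigma \in \C(n,n)$. Since $S$ is a subfunctor, $\sigma \circ f \in S$ for each such $\sigma$, so $\C(m,n) \subseteq S$. Combined with minimality of $n$, this already gives $S(k) = \emptyset$ for $k < n$ and $S(n) = \C(m,n)$.

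The second step is to push this up to all $n' \geq n$ using the generation-in-degree-$\leq 1$ hypothesis. Given any $\phi \in \C(m,n')$, write $\phi$ as a composite of $n'-m$ morphisms of degree $1$ (with degree-$0$ pieces absorbed, since they are automorphisms by the EI property); then grouping the first $n-m$ pieces and the last $n'-n$ pieces produces a factorization $\phi = \chi \circ \psi$ with $\psi \in \C(m,n)$ and $\chi \in \C(n,n')$. This shows $\C(m,n') = \C(n,n') \circ \C(m,n)$. Since $\C(m,n) \subseteq S$ and $S$ is closed under postcomposition, $\C(m,n') \subseteq S(n')$, and the reverse containment is automatic. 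Putting $\C(m,n) = \C(n,n) \cdot f$ back in gives
\[
S(n') = \C(m,n') = \C(n,n') \circ \C(n,n) \cdot f = \C(n,n') \cdot f,
\]
which is precisely the value at $n'$ of the subfunctor generated by $f$.

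There is no real obstacle here; the only subtlety is making sure the factorization argument is airtight, i.e., that collecting degree-$1$ generators truly witnesses $\C(m,n') = \C(n,n') \circ \C(m,n)$ for every intermediate $n$. This is exactly what the last two bullets in the definition of an EI category of type $\N$ are designed to supply, so the proof reduces to a direct unwinding of those axioms together with transitivity.
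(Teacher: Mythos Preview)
Your proof is correct and follows essentially the same approach as the paper: choose $f:m\to n$ in $S$ of minimal degree, use the degree-one generation hypothesis to factor any morphism $m\to n'$ (with $n'\geqslant n$) through some morphism $m\to n$, and then invoke the transitive $\C(n,n)$-action on $\C(m,n)$ to rewrite that factor as an automorphism times $f$. The only cosmetic difference is that you first establish $S(n')=\C(m,n')$ for $n'\geqslant n$ and then identify this with $\C(n,n')\cdot f$, whereas the paper handles both at once by taking an arbitrary $h\in S$ and directly exhibiting it as $g\circ\delta\circ f$.
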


\begin{proof}
Choose a morphism $f: m \to n$ in $S$ such that $\deg(f)$ is minimal. We claim that
\[
S = \{gf \mid g \in \C(n, -) \}.
\]
Clearly, the right side is a subset of $S$. On the other hand, for every $h: m \to r$ in $S$, one has $\deg(h) \geqslant \deg(f)$, so $r \geqslant n$. Therefore, one can decompose $h$ as follows:
\[
m \overset{h_1} \longrightarrow m+1 \overset{h_2} \longrightarrow \ldots \overset{h_{n-m}} \longrightarrow n \overset{h_{n-m+1}} \longrightarrow \ldots \overset{h_{n-r}} \longrightarrow r
\]
Since $\C(n, n)$ acts transitively on $\C(m, n)$, we can find a certain $\delta \in \C(n, n)$ such that
\[
h = (h_{n-r} \ldots h_{n-m+1}) \circ (h_{n-m} \ldots h_1) = (h_{n-r} \ldots h_{n-m+1}) \circ (\delta \circ f)
\]
contained in $\{gf \mid g \in \C(n, -) \}$, so the equality holds.
\end{proof}

By the transitive action condition, if $S \subseteq \C(n, -)$ contains a morphism of degree $d$, then it contains all morphisms from $n$ to $n+d$. This observation as well as the above lemma tells us that nonempty subfunctors of $\C(n, -)$ are parameterized by natural numbers. Explicitly, for $r \in \N$, there is a unique subfunctor $S(n, r) \subseteq \C(n, -)$ consisting of all morphisms $f: n \to \bullet$ with $\deg(f) \geqslant r$. Moreover, one has the following inclusions:
\[
\C(n, -) = S(n, 0) \supsetneqq S(n, 1) \supsetneqq S(n, 2) \supsetneqq \ldots.
\]

Let $J$ be a Grothendieck topology on $\Co$ such that each $J(n)$ does not contain the empty subfunctor. We call it a \textit{generic} Grothendieck topology on $\Co$. Define a function
\[
\boldsymbol{d}: \mathbb{N} \to \mathbb{N} \cup \{\infty\}, \quad n \mapsto d_n = \sup \{r \in \N \mid S(n, r) \in J(n) \}.
\]
Note that $d_n$ might be $\infty$. Moreover, $J$ is completely determined by $\boldsymbol{d}$; that is,
\[
J(n) = \{ S(n, r) \mid r \leqslant d_n \}
\]
since $J$ is closed under inclusions. Conversely, given such a function $\boldsymbol{d}$, we get a rule $J$ assigning $J(n)$ to $n \in \Ob(\C)$, though $J$ might not be a Grothendieck topology.

The above construction gives an injective map
\[
\{ \text{generic Grothendieck topology $J$ on } \Co \} \longrightarrow \{ \text{function } \boldsymbol{d}: \Ob(\C) = \N \to \N \cup \{\infty\} \}.
\]

\begin{lemma}
Let $f: m \to n$ be a morphism in $\C$, Then one has
\[
f^{\ast}(S(m, r)) = \begin{cases}
S(n, r - \deg(f)) = S(n, r + m - n), & \text{if } r+m \geqslant n;\\
\C(n, -), & \text{else.}
\end{cases}
\]
\end{lemma}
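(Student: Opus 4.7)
The plan is a direct calculation using the grading structure. By definition $f^\ast(S(m,r)) = \{g : n \to \bullet \mid gf \in S(m,r)\}$, so I would first characterize when a composite $gf$ lies in $S(m,r)$. Since $\deg(f) = n-m$ and for any $g : n \to k$ one has $\deg(g) = k-n$, the rank function property forces $\deg(gf) = \deg(g) + \deg(f) = \deg(g) + (n-m)$. Therefore $gf \in S(m,r)$ if and only if $\deg(g) \geqslant r - (n-m) = r + m - n$.

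Now I would split into two cases according to the sign of $r + m - n$. If $r + m \geqslant n$, then $r + m - n$ is a nonnegative integer, and the condition $\deg(g) \geqslant r + m - n$ cuts out exactly the subfunctor $S(n, r+m-n) \subseteq \C(n, -)$; this gives the first branch of the formula. If on the other hand $r + m < n$, then $r + m - n$ is strictly negative, while every morphism $g$ with domain $n$ has $\deg(g) \geqslant 0$, so the condition is satisfied by every $g \in \C(n, -)$, yielding the second branch.

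There is no real obstacle here; the only subtle point is remembering that the target category is indexed over $\N$, so degrees cannot go negative, which is precisely what forces the dichotomy. One should briefly record that the set on the right of $f^\ast(S(m,r)) = S(n, r+m-n)$ is well-defined in the first case (the index is in $\N$) and note that in the second case the resulting $\C(n,-)$ is indeed the maximal sieve. Once these observations are in place, the equality reduces to the equivalence $\deg(gf) \geqslant r \iff \deg(g) \geqslant r + m - n$ displayed above.
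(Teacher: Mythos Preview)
Your argument is correct and is precisely the direct verification the paper has in mind; the paper's own proof is simply ``This can be easily verified by the definition,'' and your write-up is a faithful unpacking of that one line.
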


\begin{proof}
This can be easily verified by the definition.
\end{proof}

The following result classifies all generic Grothendieck topologies on $\Co$.

\begin{proposition} \label{generic topologies}
Generic Grothendieck topologies on $\Co$ are parameterized by functions
\[
\boldsymbol{d}: \Ob(\C) = \N \to \N \cup \{\infty\}
\]
satisfying the following condition: if $d_n \neq 0$, then $d_{n+1} = d_n - 1$.
\end{proposition}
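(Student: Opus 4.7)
The strategy is to show that $J \mapsto \boldsymbol{d}$ is a bijection onto the stated class of functions. Injectivity is immediate: by the previous lemma the nonempty subfunctors of $\C(n,-)$ are precisely the $S(n, r)$, and since $J(n)$ is closed under inclusions, $J(n) = \{S(n, r) \mid r \le d_n\}$ is determined by $d_n$. Thus it suffices to establish (a) that the $\boldsymbol{d}$ attached to any generic Grothendieck topology on $\Co$ satisfies the stated condition, and (b) that for every such $\boldsymbol{d}$, the rule $J(n) = \{S(n, r) \mid r \le d_n\}$ is a Grothendieck topology.

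For (a), fix $n$ with $d_n \ge 1$. The stability axiom applied to a morphism $f\colon n \to n+1$, combined with the pullback formula of the previous lemma, immediately gives $S(n+1, d_n - 1) \in J(n+1)$, so $d_{n+1} \ge d_n - 1$ under the convention $\infty - 1 = \infty$; when $d_n = \infty$ this already yields $d_{n+1} = \infty$. For $d_n$ finite, assume for contradiction that $d_{n+1} \ge d_n$. Iterating the stability inequality $d_{m+1} \ge d_m - 1$ starting at $m = n+1$ yields $d_{n+k} \ge d_n - k + 1$ for all $k \ge 1$, and in particular $d_{n+d_n} \ge 1$, i.e.\ $S(n + d_n, 1) \in J(n + d_n)$. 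Now apply the transitivity axiom with $S = S(n, d_n) \in J(n)$ and $T = S(n, d_n + 1)$: for every $f \in S$ the pullback formula shows that $f^\ast(T)$ is the full sieve $\C(\cod(f), -)$ unless $\deg(f) = d_n$, in which case $f^\ast(T) = S(n + d_n, 1) \in J(n + d_n)$. Thus $T \in J(n)$, forcing $d_n \ge d_n + 1$, a contradiction.

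For (b), define $J(n) = \{S(n, r) \mid r \le d_n\}$. The maximal axiom is immediate. The stability axiom reduces, via the pullback formula, to the monotonicity statement $d_{n+s} \ge d_n - s$ for all $s \ge 0$ (with $\infty - s = \infty$ and the inequality holding trivially when $d_n - s < 0$), which I prove by induction on $s$ using the decrement rule on $\boldsymbol{d}$ whenever $d_n \ge 1$. For transitivity, take $S = S(n, r) \in J(n)$ and a sieve $T \subseteq \C(n, -)$ with $f^\ast(T) \in J(\cod(f))$ for every $f \in S$. Since $J$ is generic, $T$ must be nonempty, hence $T = S(n, t)$ for some $t$; the case $t \le r$ is immediate. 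If $t > r$, evaluating the hypothesis on an $f$ of degree exactly $r$ yields $d_{n+r} \ge t - r \ge 1$. The decrement rule then propagates backwards from $n + r$ to $n$: the possibility $d_n = r$ finite is ruled out, since iterating the decrement rule from $n$ would force $d_{n+r} = 0$, contradicting $d_{n+r} \ge 1$; otherwise ($d_n > r$ or $d_n = \infty$) iterating the rule gives $d_{n+r} = d_n - r$, whence $d_n \ge t$ and $T \in J(n)$.

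The main obstacle is the transitivity step on both sides: in (a) it requires identifying the right test pair $(S, T)$ so that the pullback condition reduces to the single nontrivial instance at $n + d_n$, and in (b) it requires arguing backwards along the rule on $\boldsymbol{d}$ -- the fact that nonzero values strictly decrement by one is precisely what upgrades the inequality $d_{n+r} \ge t - r$ to the equality $d_{n+r} = d_n - r$ needed to conclude $d_n \ge t$.
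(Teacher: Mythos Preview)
Your proof is correct and follows essentially the same approach as the paper's: both establish the bijection by checking that the condition on $\boldsymbol{d}$ is necessary (via stability for the lower bound $d_{n+1}\ge d_n-1$ and transitivity applied to the pair $S(n,d_n),\,S(n,d_n+1)$ for the upper bound) and sufficient (checking the three axioms directly, with transitivity reducing to the identity $d_{n+r}=d_n-r$ obtained by iterating the decrement rule). Your presentation is slightly more streamlined in that you fold the case $d_n=0$ into the general transitivity argument rather than treating it separately, but the content is the same.
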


More explicitly, a sequence $\boldsymbol{d}: \N \to \N \cup \{ \infty \}$ satisfies the above condition if and only if it decomposes into the following pieces:
\begin{enumerate}
\item $(0, 0, 0, \ldots, 0)$;

\item $(r, r - 1, r - 2, \ldots, 1, 0)$;

\item $(\infty, \infty, \infty, \ldots)$,
\end{enumerate}
where the last piece might or might not appear. But if it appears, it must be the tail of the sequence.

For instances, if all pieces of $\boldsymbol{d}$ are of the first from, then it corresponds to the minimal topology; if $\boldsymbol{d}$ has only one piece which is of the third form, then it corresponds to the atomic Grothendieck topology.

\begin{proof}
First we check that the rule $J$ determined by a function $\boldsymbol{d}$ satisfying the specified condition is indeed a Grothendieck topology on $\Co$ (it is clearly generic by the construction). The maximal axiom is clear since $\C(n, -) \in J(n)$, so we check the stability axiom and the transitivity axiom.

\textbf{Stability.} Take $m \in \Ob(\C)$, $S(m, r) \in J(m)$ with $r \leqslant d_m$, and $f \in \C(m ,n)$. If $d_n = \infty$, then $f^{\ast} (S(m, r)) \in J(n)$ since $J(n)$ contains all nonempty subfunctors of $\C(n, -)$. Thus we may assume $d_n \neq \infty$ and hence $d_m \neq \infty$ as well. We have two cases:
\begin{itemize}
\item If $r + m \geqslant n$, then $d_m + m \geqslant n$, so $d_n = d_m + m - n \geqslant 0$. By the previous lemma,
\[
f^{\ast} (S(m, r)) = S(n, m + r - n) \in J(n)
\]
since $m+r-n \leqslant d_m + m -n = d_n$.

\item If $r + m < n$, then $f \in S(m, r)$ and $f^{\ast} (S(m, r)) = \C(n, -)$, which is clearly contained in $J(n)$.
\end{itemize}

\textbf{Transitivity.} Take $m \in \Ob(\C)$, $S(m, s) \in J(m)$ with $s \leqslant d_m$, and $S(m, r) \subseteq \C(m, -)$. Suppose that $f^{\ast} (S(m, r)) \in J(n)$ for $f: m \to n$ in $S(m, s)$. We want to show $S(m, r) \in J(m)$; in other words, $r \leqslant d_m$. This is clearly true if $d_m = \infty$, so we assume that $d_m$ is finite. We have two cases:
\begin{enumerate}
\item If $d_m = 0$, then $J(m) = \{ \C(m, -) \}$ and $S(m, s) = \C(m, -)$. In this case, one can take $f$ to be the identity morphism on $m$. Therefore,
\[
S(m, r) = f^{\ast} (S(m, r)) \in J(m)
\]
implies $S(m, r) = \C(m, -)$, so $r = 0$ and the conclusion holds.

\item Suppose that $d_m > 0$. In this case, if $r \leqslant s$, then $r \leqslant d_m$, so we can assume that $s < r$. For a morphism $f: m \to m+s$ in $S(m, s)$, by the previous lemma,
\[
f^{\ast} (S(m, r)) = S(m+s, r-s) \in J(m+s),
\]
so $r - s \leqslant d_{m+s}$. But since $d_m \geqslant s$, one knows that $d_{m+s} = d_m -s$ by the specified condition on $\boldsymbol{d}$. Therefore, we have $r - s \leqslant d_m -s$, so $r \leqslant d_m$, as desired.
\end{enumerate}

We have checked that the rule $J$ is indeed a generic Grothendieck topology on $\Co$. Conversely, given a generic Grothendieck topology $J$ on $\Co$, we want to show that the corresponding function $\boldsymbol{d}$ satisfies the specified condition. One has two cases:
\begin{enumerate}

\item If $d_m = \infty$, then $S(m, r) \in J(m)$ for $r \geqslant 0$. For $f \in \C(m, m+1)$ and $r \geqslant 1$, one has
\[
f^{\ast} (S(m, r)) = S(m+1, r-1) \in J(m+1),
\]
so $J(m+1)$ contains all nonempty subfunctors of $\C(m+1, -)$, namely $d_{m+1} = \infty = d_m - 1$.

\item If $d_m < \infty$, one has $d_{m+1} \geqslant d_m - 1$ by an argument similar to the previous case. If $d_{m+1} > d_m - 1$, then for $n = m + d_m$,
\[
d_n \geqslant d_{n-1} - 1 \geqslant \ldots \geqslant d_{m+1} - (n - m - 1) \geqslant d_m + m + 1 - n = 1,
\]
so $S(n, 1) \in J(n)$. Consequently, for $f: m \to l$ in $S(m, d_m) \in J(m)$, one has
\[
f^{\ast} (S(m, d_m+1)) = \begin{cases}
S(n, d_m + 1 - \deg(f)) = S(n, 1) \in J(n), & \text{ if } l = n;\\
S(l, 0) \in J(l), & \text{ if } l > n.
\end{cases}
\]
Therefore, $S(m, d_m + 1) \in J(m)$ by the transitivity axiom, so $d_m \geqslant d_m + 1$, which is impossible. Thus we also have $d_{m+1} = d_m - 1$.
\end{enumerate}
\end{proof}

\begin{corollary} \label{another parametrization}
Generic Grothendieck topologies on $\Co$ are parameterized by subsets of $\Ob(\C)$.
\end{corollary}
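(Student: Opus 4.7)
The plan is to use Proposition \ref{generic topologies} and exhibit an explicit bijection between functions $\boldsymbol{d}: \N \to \N \cup \{\infty\}$ satisfying the condition (if $d_n \neq 0$, then $d_{n+1} = d_n - 1$) and subsets of $\Ob(\C) = \N$. The key observation driving everything is that such a function $\boldsymbol{d}$ is completely determined by its zero set, because the recursive condition forces every nonzero value to equal the distance to the next occurrence of a zero (with value $\infty$ if there is no later zero at all).

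Concretely, I would define
\[
\Phi(\boldsymbol{d}) = T_{\boldsymbol{d}} := \{n \in \N \mid \boldsymbol{d}(n) = 0\} \subseteq \N
\]
and, in the other direction, send a subset $T \subseteq \N$ to the function
\[
\Psi(T) = \boldsymbol{d}_T, \qquad \boldsymbol{d}_T(n) := \min\{m - n \mid m \in T,\ m \geq n\},
\]
with the convention $\min \emptyset = \infty$.

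The first step is to check that $\boldsymbol{d}_T$ satisfies the condition of Proposition \ref{generic topologies}. If $\boldsymbol{d}_T(n) = k$ is finite and positive, then $n+k$ is the smallest element of $T$ that is $\geq n$; since $k \geq 1$, it remains the smallest element of $T$ that is $\geq n+1$, so $\boldsymbol{d}_T(n+1) = k-1$. If $\boldsymbol{d}_T(n) = \infty$, then $T$ contains no element $\geq n$, hence none $\geq n+1$, and $\boldsymbol{d}_T(n+1) = \infty$, matching the convention $\infty - 1 = \infty$.

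The second step is to verify that $\Phi$ and $\Psi$ are mutual inverses. The identity $\Phi \circ \Psi = \mathrm{id}$ is immediate from the definition: $\boldsymbol{d}_T(n) = 0$ exactly when $n \in T$. For $\Psi \circ \Phi = \mathrm{id}$, I would argue by cases on $\boldsymbol{d}(n)$. When $\boldsymbol{d}(n) = k$ is finite, iterating the recursive condition shows $\boldsymbol{d}(n+j) = k - j$ for $0 \leq j \leq k$, so $n+k \in T_{\boldsymbol{d}}$ is the minimal element of $T_{\boldsymbol{d}}$ that is $\geq n$, giving $\boldsymbol{d}_{T_{\boldsymbol{d}}}(n) = k$. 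When $\boldsymbol{d}(n) = \infty$, induction on $j$ using the condition yields $\boldsymbol{d}(n+j) = \infty$ for all $j \geq 0$, so $T_{\boldsymbol{d}} \cap \{m \geq n\} = \emptyset$ and $\boldsymbol{d}_{T_{\boldsymbol{d}}}(n) = \infty$. Combined with Proposition \ref{generic topologies}, this gives the claimed parametrization. There is no genuine obstacle; the only content is the observation that the decomposition into blocks $(r, r-1, \ldots, 0)$, isolated zeros, and an optional infinite tail of $\infty$'s is precisely the combinatorial data of a subset of $\N$, recorded as the set of positions where $\boldsymbol{d}$ vanishes.
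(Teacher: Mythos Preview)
Your proof is correct and follows essentially the same approach as the paper: both exhibit an explicit bijection between admissible functions $\boldsymbol{d}$ and subsets of $\N$, with the only difference being that the paper encodes $\boldsymbol{d}$ by its \emph{support} $\{n \mid d_n > 0\}$ while you encode it by the complementary \emph{zero set} $\{n \mid d_n = 0\}$. Your verification is in fact more detailed than the paper's, which leaves the check that the two maps are inverse as ``obvious.''
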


\begin{proof}
Given a function $f: \mathbb{N} \to \{0, \, 1\}$, we define a function $\boldsymbol{d}: \N \to \N \cup \{\infty \}$ by setting
\[
d_n = \sup \{ s+1 \mid f(n) = f(n+1) = \ldots = f(n+s) = 1 \}.
\]
Conversely, given a function $\boldsymbol{d}$, we define $f$ by setting $f(n) = 1$ when $d_n > 0$, and $f(n) = 0$ when $d_n = 0$. It is obvious that this construction gives a bijection between $2^{\mathbb{N}}$ and the set of functions $\boldsymbol{d}$ in the above proposition.
\end{proof}

\begin{remark}
Generic Grothendieck topologies on $\Co$, although are parameterized by subsets of $\Ob(\C)$, need not be rigid. Indeed, the atomic Grothendieck topology $J_a$ is generic. But since no object is $J_a$-irreducible, it is not rigid. On the other hand, the subcategory topology $K$ on $\Co$ such that no object is $K$-irreducible is the maximal topology, but it is not generic. We also observe that in contrast to Lemma \ref{intersection}, the atomic Grothendieck topology $J$ is not closed under arbitrary intersections since for $x \in \Ob(\C)$, the intersection of all $S \in J(x)$ is the empty set not contained in $J(x)$.
\end{remark}

Now we consider non-generic Grothendieck topologies $J$ on $\Co$. In this case there exists a certain object $m$ such that $\emptyset \in J(m)$. Here are some elementary observations:
\begin{itemize}
\item By the stability axiom, $\emptyset \in J(n)$ for all $n \geqslant m$.

\item If $m \geqslant 1$ and $\emptyset \notin J(m-1)$, then $J(m-1)$ only contains the subfunctor $\C(m-1, -)$. Indeed, if $S(m-1, r) \in J(m-1)$ for a certain $r \geqslant 1$, then $\emptyset \in J(m-1)$ via applying the transitivity axiom to $S(m-1, r) \in J(m-1)$ and $\emptyset \subseteq \C(m-1, -)$.

\item There are no objects $n$ such that $J(n)$ consists of all nonempty subfunctors of $\C(n, -)$. Indeed, by the first observation, we shall has $n < m$. But applying the transitivity axiom to $\emptyset \subseteq \C(n, -)$ and the subfunctor $S(n, m-n) \in J(n)$, we deduce that $\emptyset \in J(n)$, which is impossible.
\end{itemize}

These observations allow us to define a function
\[
\{ \text{non-generic Grothendieck topology $J$ on } \Co \} \longrightarrow \{ \text{function } \boldsymbol{d}: \Ob(\C) = \N \to \N \cup \{-\infty\} \}
\]
in the following way: if $J(n)$ contains the empty functor, then $d_n = - \infty$; otherwise, $J(n)$ is a finite set by the third observation, so we define $d_n$ as in the generic case.

The following result classifies all non-generic Grothendieck topologies on $\Co$.

\begin{proposition} \label{nongeneric topologies}
Non-generic Grothendieck topologies on $\Co$ are parameterized by functions
\[
\boldsymbol{d}: \Ob(\C) = \N \to \N \cup \{-\infty\}
\]
satisfying the following conditions: $d_n = -\infty$ for $n \gg 0$, and if $d_n \neq 0$, then $d_{n+1} = d_n - 1$.
\end{proposition}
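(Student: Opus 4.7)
My plan is to mirror the structure of the proof of Proposition~\ref{generic topologies}, adding a layer of case analysis to accommodate values $d_n = -\infty$. Starting from a non-generic Grothendieck topology $J$ on $\Co$, the three elementary observations preceding the proposition's statement let me define $\boldsymbol{d}$ unambiguously: set $d_n = -\infty$ exactly when $\emptyset \in J(n)$, and otherwise $d_n = \sup\{r \in \N \mid S(n,r) \in J(n)\}$. Because $J$ is non-generic, the set $\{n : d_n = -\infty\}$ is nonempty, and by the stability axiom applied to $\emptyset$ it is upward-closed in $\N$, establishing the first listed condition. The third observation together with closure under inclusions shows that $J(n) = \{S(n,r) : 0 \leq r \leq d_n\}$ whenever $d_n$ is finite, so $J$ is completely recovered from $\boldsymbol{d}$.

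For the descent condition $d_n \neq 0 \Rightarrow d_{n+1} = d_n - 1$, I split on whether $d_n = -\infty$ or is finite. In the former case, stability applied to any morphism $n \to n+1$ forces $\emptyset \in J(n+1)$, hence $d_{n+1} = -\infty = d_n - 1$. In the latter case the verification is essentially identical to the proof of Proposition~\ref{generic topologies}: stability yields $d_{n+1} \geq d_n - 1$, and if the inequality were strict, an application of transitivity to $S(n, d_n) \in J(n)$ combined with stability along suitable morphisms $n \to \bullet$ would force $S(n, d_n+1) \in J(n)$, contradicting the definition of $d_n$.

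For the converse, given $\boldsymbol{d}$ satisfying the two conditions, I define $J(n)$ to be the collection of all subfunctors of $\C(n,-)$ when $d_n = -\infty$ and $\{S(n,r) : 0 \leq r \leq d_n\}$ otherwise. The descent condition implies both that $d_n = -\infty$ forces $d_m = -\infty$ for all $m \geq n$, and that whenever $d_n \geq s > 0$ one has $d_{n+k} = d_n - k$ for all $0 \leq k \leq s$. The maximal axiom is immediate. Stability splits into cases according to whether $d_n$ and $d_m$ are finite: if either is $-\infty$ then $J(m)$ contains every subfunctor and there is nothing to check, and if both are finite, the explicit formula $f^*(S(n,r)) = S(m, r-(m-n))$ together with the descent identity verifies $f^*(S(n,r)) \in J(m)$.

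The main obstacle is the transitivity axiom. Given $T \in J(n)$ and $S \subseteq \C(n,-)$ with $f^*(S) \in J(m)$ for every $(f : n \to m) \in T$, I first dispatch $d_n = -\infty$, where the conclusion is automatic. When $d_n$ is finite, $T = S(n,s)$ for some $s \leq d_n$. If $s = 0$ then $1_n \in T$ and the hypothesis gives $S = 1_n^*(S) \in J(n)$ directly. If $s > 0$, then $S$ must be nonempty, because $S = \emptyset$ combined with the hypothesis at a morphism $f : n \to n+s$ in $T$ would demand $\emptyset \in J(n+s)$, contradicted by the descent identity $d_{n+s} = d_n - s \geq 0$ being finite. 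Hence $S = S(n, r)$ for some $r$, and the finite-case argument in the proof of Proposition~\ref{generic topologies} applies verbatim to conclude $r \leq d_n$, and therefore $S \in J(n)$.
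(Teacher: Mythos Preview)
Your proof is correct and follows exactly the approach the paper intends: the paper's own proof is simply the sentence ``The proof is similar to that of Proposition~\ref{generic topologies}, so we omit it.'' You have carefully supplied the omitted details, in particular the additional case analysis handling $d_n = -\infty$ (using upward-closedness of the $-\infty$ locus and the second elementary observation preceding the proposition) and the argument that $S$ cannot be empty in the transitivity check when $s>0$, both of which are the only genuinely new points beyond Proposition~\ref{generic topologies}.
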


More explicitly, a sequence $\boldsymbol{d}: \N \to \N \cup \{ \infty \}$ satisfies the above condition if and only if it decomposes into the following pieces:
\begin{enumerate}
\item $(0, 0, 0, \ldots, 0)$;

\item $(r, r - 1, r - 2, \ldots, 1, 0)$;

\item $(-\infty, -\infty, -\infty, \ldots)$,
\end{enumerate}
where the last piece must appear and must be the tail of the sequence.

\begin{proof}
The proof is similar to that of Proposition \ref{generic topologies}, so we omit it.
\end{proof}

\begin{corollary}
Non-generic Grothendieck topologies on $\Co$ are parameterized by finite subsets of $\Ob(\C)$.
\end{corollary}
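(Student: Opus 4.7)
The plan is to invoke Proposition \ref{nongeneric topologies}, which already parameterizes non-generic Grothendieck topologies on $\Co$ by functions $\boldsymbol{d}: \N \to \N \cup \{-\infty\}$ satisfying the tail condition and the recursion. It then suffices to exhibit a bijection between such sequences and finite subsets of $\N = \Ob(\C)$. I would send a sequence $\boldsymbol{d}$ to the set $T_{\boldsymbol{d}} := \{n \in \N : d_n = 0\}$, which is exactly the set of $J$-irreducible objects for the Grothendieck topology $J$ determined by $\boldsymbol{d}$. This set is automatically finite, since $d_n = -\infty$ for all sufficiently large $n$, leaving only finitely many $n$ with $d_n \in \N$, and in particular only finitely many with $d_n = 0$.

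For the inverse map, given a finite subset $T \subseteq \N$, I would reconstruct $\boldsymbol{d}_T$ as follows. If $T = \emptyset$, set $d_n = -\infty$ for all $n$ (corresponding to the maximal topology). Otherwise, let $m := \max(T)$; set $d_n = -\infty$ for $n > m$, set $d_n = 0$ for $n \in T$, and for $n < m$ with $n \notin T$ set $d_n := t_n - n$, where $t_n := \min\{t \in T : t > n\}$. A short case analysis shows that $\boldsymbol{d}_T$ satisfies the conditions of Proposition \ref{nongeneric topologies}: the tail condition is immediate; and if $d_n > 0$, then either $n+1 = t_n$ (so $d_{n+1} = 0 = d_n - 1$) or $n+1 < t_n$ with $t_{n+1} = t_n$ (so $d_{n+1} = t_n - n - 1 = d_n - 1$).

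The main point requiring care, which I would treat as the principal obstacle, is showing that the assignment $\boldsymbol{d} \mapsto T_{\boldsymbol{d}}$ genuinely admits the inverse described above. The key observation is that a positive value $d_n = k$ forces the recursion to produce a strictly decreasing sequence $d_n, d_{n+1}, \ldots, d_{n+k}$ lying entirely in $\N$ and terminating at $d_{n+k} = 0$; hence no $-\infty$ can interrupt a positive stretch. Consequently every positive stretch of $\boldsymbol{d}$ must end at an element of $T_{\boldsymbol{d}}$, the value $d_n$ for $n \notin T_{\boldsymbol{d}}$ with $d_n > 0$ is forced to equal the distance from $n$ to the nearest subsequent element of $T_{\boldsymbol{d}}$, and the $-\infty$ tail must begin at $\max(T_{\boldsymbol{d}}) + 1$ (or at $0$ if $T_{\boldsymbol{d}} = \emptyset$, in which case no positive values are possible because any would have to descend to $0$, contradicting $T_{\boldsymbol{d}} = \emptyset$). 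These forced values match the formula for $\boldsymbol{d}_T$ on the nose, so $\boldsymbol{d}_{T_{\boldsymbol{d}}} = \boldsymbol{d}$ and $T_{\boldsymbol{d}_T} = T$, completing the bijection.
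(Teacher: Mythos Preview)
Your proof is correct and follows essentially the same route as the paper: both invoke Proposition \ref{nongeneric topologies} and then exhibit a bijection between the admissible sequences $\boldsymbol{d}$ and finite subsets of $\N$. The paper's proof is only a one-line sketch (``the proof is similar to that of Corollary \ref{another parametrization}'' after noting that finite subsets correspond to eventually-zero $\{0,1\}$-valued functions), whereas you spell out an explicit bijection $\boldsymbol{d} \mapsto \{n : d_n = 0\}$ and verify both directions carefully; your choice of finite subset is the set of $J$-irreducible objects, which is arguably more natural than the analogue of the paper's generic-case encoding, but either choice works.
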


\begin{proof}
Note that there is a natural bijection between the set of finite subsets of $\mathbb{N}$ and the set of functions $\{f: \mathbb{N} \to \{0, \, 1\} \mid f(n) = 0 \text{ for } n \gg 0 \}$. With this observation, the proof is similar to that of Corollary \ref{another parametrization}.
\end{proof}

The following result classifies all rigid Grothendieck topologies on $\Co$.

\begin{proposition} \label{rigid topologies for type n}
A Grothendieck topology $J$ on $\C^{\op}$ is rigid if and only if its corresponded function $\boldsymbol{d}$ satisfies the following condition: $d_n \neq \infty$ for all $n \in \N$. In particular, every non-generic Grothendieck topology is rigid.
\end{proposition}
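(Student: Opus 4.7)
The plan is to exploit the parametrization of Grothendieck topologies on $\Co$ by functions $\boldsymbol{d}$ established in Propositions \ref{generic topologies} and \ref{nongeneric topologies}. The key preliminary step will be to identify the $J$-irreducible objects in terms of $\boldsymbol{d}$: an object $n$ satisfies $J(n)=\{\C(n,-)\}$ precisely when $d_n=0$, since $d_n>0$ or $d_n=\infty$ produces the proper covering sieve $S(n,1)\in J(n)$, while $d_n=-\infty$ produces $\emptyset\in J(n)$. With this identification in hand, the verification of both directions reduces to comparing the sieve $S_n$ of Definition \ref{def of rigid topologies} with the covering sieves in $J(n)$.

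For the only if direction I will argue by contrapositive: assume $d_n=\infty$ for some $n$. Then $J$ is generic, and iterating the decay condition $d_{m+1}=d_m-1$ (with the convention $\infty-1=\infty$) forces $d_m=\infty$ for every $m\geqslant n$, so no $J$-irreducible object sits at an index $\geqslant n$. Because $\C$ is directed with $\C(n,m)\neq\emptyset$ only when $m\geqslant n$, the sieve $S_n$ generated by morphisms out of $n$ landing at irreducible objects is therefore empty. But $J(n)$ contains only nonempty sieves in the generic case, so $S_n\notin J(n)$, contradicting rigidity.

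For the if direction I will suppose $d_n\in\N\cup\{-\infty\}$ for every $n$ and check case by case that $S_n\in J(n)$. When $d_n=-\infty$, the empty sieve lies in $J(n)$, so closure of $J$ under inclusions automatically places $S_n$ in $J(n)$. When $d_n=0$, the object $n$ itself is $J$-irreducible, so $S_n=\C(n,-)\in J(n)$. When $d_n$ is a positive integer, the decay condition gives $d_{n+d_n}=0$, so $n+d_n$ is $J$-irreducible; I will use the fact that $\Mor(\C)$ is generated in degrees $0$ and $1$ to decompose any morphism of degree $\geqslant d_n$ through $n+d_n$, which shows that the sieve generated by $\C(n,n+d_n)$ equals $S(n,d_n)$. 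Since this generated sieve lies inside $S_n$ and $S(n,d_n)\in J(n)$, inclusion closure delivers $S_n\in J(n)$. The final assertion is immediate because Proposition \ref{nongeneric topologies} shows that $\boldsymbol{d}$ for a non-generic topology takes values in $\N\cup\{-\infty\}$.

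The main conceptual point to be careful about is the formal arithmetic $\infty-1=\infty$ in the decay condition, which ensures that a single index with $d_n=\infty$ propagates the same value to every larger index and thereby kills every candidate $J$-irreducible object reachable from $n$ by a morphism. Once this is absorbed, the remainder of the argument is essentially bookkeeping against the explicit description of $\boldsymbol{d}$ and the observation that rigidity only requires $S_n\in J(n)$, not that $S_n$ itself be the minimal covering sieve.
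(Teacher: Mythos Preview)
Your proposal is correct and follows the same approach as the paper. The paper's proof is terser—it declares the if direction ``routine to check'' and only writes out the contrapositive of the only if direction—but your identification of $J$-irreducible objects as those with $d_n=0$, the propagation of $d_n=\infty$ to all larger indices, and the observation that $S_n=\emptyset\notin J(n)$ in the generic case are exactly what the paper does; your case analysis for the if direction simply spells out what the paper leaves implicit.
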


In other words, the function $\boldsymbol{d}$ can be represented by a sequence which decomposes into the following pieces:
\begin{enumerate}
\item $(0, 0, 0, \ldots, 0)$;

\item $(r, r - 1, r - 2, \ldots, 1, 0)$;

\item $(-\infty, -\infty, -\infty, \ldots)$,
\end{enumerate}
where the last piece may or may not appear. But if it appears, it must be the tail of the sequence.

\begin{proof}
It is routine to check that Grothendieck topologies satisfying this condition are rigid. To prove the only if direction, we need to show that if $d_n = \infty$ for a certain $n \in \N$, then the corresponded Grothendieck topology on $\Co$ is not rigid. Let $n$ be the minimal number such that $d_n = \infty$. Note that if $m \geqslant n$, then $d_m = \infty$, and hence $m$ is not $J$-irreducible. Thus the subfunctor of $\C(n, -)$ generated by morphisms from $n$ to $J$-irreducible objects is the empty set, which is not contained in $J(n)$. Consequently, $J$ is not rigid.
\end{proof}

We call a Grothendieck topology $J$ on $\C^{\op}$ \textit{exceptional} if it is not rigid, which shall be generic by the previous proposition. Therefore, its corresponds function $\boldsymbol{d}$ satisfies the following condition: $d_n = \infty$ for some $n \in \N$. Since an explicit description of $\Sh(\BC^{\op}, \mathcal{O})$ for rigid Grothendieck topologies has been obtained, in the rest of this section we give a description of $\Sh(\BC^{\op}, \mathcal{O})$ for exceptional Grothendieck topologies.

Let $J$ be an exceptional Grothendieck topology on $\Co$ corresponded to a function $\boldsymbol{d}$, and let $\D$ be the full subcategory consisting of objects $n$ such that $d_n = 0$ or $d_n = \infty$. Then $\D^{\op}$ is a $J$-dense subcategory of $\C^{\op}$ (for a definition of $J$-dense subcategories, see \cite[C2.2, Definition 2.2.1]{Jo}). Therefore, applying the argument in the proof of Theorem \ref{classify sheaves}, one has
\[
\Sh(\BC^{\op}, \mathcal{O}) \simeq \Sh(\boldsymbol{\mathscr{D}}^{\op}, \mathcal{O}_{\D}),
\]
where $\boldsymbol{\mathscr{D}}^{\op} = (\D^{\op}, J_{\D})$, $J_{\D}$ is a Grothendieck topology on $\D^{\op}$ such that
\[
J_{\D}(n) =
\begin{cases}
\{ \D(n, -) \}, &  \text{if } d_n = 0;\\
\{ S \mid S \subseteq \D(n, -) = \C(n, -) \}, & \text{if } d_n = \infty,
\end{cases}
\]
and $\mathcal{O}_{\D}$ is the restriction of $\mathcal{O}$ to $\D$. The full subcategory $\D$ is an artinian category whose objects can still be parameterized by $\mathbb{N}$, and $J_{\D}$ corresponds to the sequence $(0, 0, \ldots, 0, \infty, \infty, \ldots)$.

Let $\mathscr{E}$ be the full subcategory of $\D$ consisting of objects $n$ such that $d_n = \infty$. It is an ideal of $\D$, so the restriction of $J_{\D}$ to $\mathscr{E}^{\op}$ defines a Grothendieck topology $J_{\mathscr{E}}$ on $\mathscr{E}^{\op}$ by Lemma \ref{topologies on coideals}, and the restriction $\mathcal{O}_{\mathscr{E}}$ of $\mathcal{O}_{\D}$ to $\mathscr{E}$ is a structure sheaf on $(\mathscr{E}^{\op}, J_{\mathscr{E}})$. Note that $J_{\mathscr{E}}$ is the atomic Grothendieck topology.

\begin{proposition} \label{sheaves for type n}
An $\mathcal{O}_{\D}$-module $V$ is a sheaf of modules on $(\boldsymbol{\mathscr{D}}^{\op}, \mathcal{O}_{\D})$ if and only if its restriction $V_{\mathscr{E}}$ to $\mathscr{E}$ is a sheaf of modules on $(\boldsymbol{\mathscr{E}}^{\op}, \mathcal{O}_{\mathscr{E}})$.
\end{proposition}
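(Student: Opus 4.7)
The strategy is to check the sheaf axiom (Definition \ref{sheaf def}) object by object on $\D$ and exploit the fact that $J_\D$ is nontrivial only at objects of $\mathscr{E}$. First I would record the key combinatorial fact that $\mathscr{E}$ is an ideal of $\D$: if $n \in \Ob(\mathscr{E})$, so $d_n = \infty$, then Proposition \ref{generic topologies} forces $d_m = \infty$ for every $m \geqslant n$, hence $m \in \Ob(\mathscr{E})$. Consequently, for any $n \in \Ob(\mathscr{E})$ one has $\D(n,-) = \mathscr{E}(n,-)$ as sets of morphisms, every sieve on $n$ in $\D^{\op}$ is a sieve on $n$ in $\mathscr{E}^{\op}$, and $J_\D(n) = J_\mathscr{E}(n)$. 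Crucially, any matching family $\{v_f\}_{f \in S}$ for such a sieve $S$ takes values in $V_m = (V|_\mathscr{E})_m$ (since necessarily $m \in \Ob(\mathscr{E})$), so matching families for $S$ in $V$ and in $V|_\mathscr{E}$ are literally the same data, and amalgamations in $V_n$ and $(V|_\mathscr{E})_n$ coincide.

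With this identification in hand, both directions become routine. For the forward direction, a matching family for $S \in J_\mathscr{E}(n)$ in $V|_\mathscr{E}$ is the same data as a matching family for $S \in J_\D(n)$ in $V$, which admits a unique amalgamation in $V_n = (V|_\mathscr{E})_n$ by the sheaf property of $V$. For the reverse direction I would check the sheaf axiom at every $n \in \Ob(\D)$. When $d_n = 0$ the only covering sieve is $\D(n,-)$, and a matching family $\{v_f\}_{f \in \D(n,-)}$ has the unique amalgamation $v_{1_n}$ (uniqueness follows from restricting to $f = 1_n$, existence from the matching condition applied to $1_n$ and arbitrary $g$). When $d_n = \infty$, so $n \in \Ob(\mathscr{E})$, the sheaf condition is transported from $V|_\mathscr{E}$ via the identification above.

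I do not anticipate a real obstacle; the entire content of the proposition is encoded in the ideal property of $\mathscr{E} \subseteq \D$, with the rest being a tracking of definitions. The only point requiring a moment of care is confirming that the values of any matching family for a covering sieve based at an object of $\mathscr{E}$ live in $V|_\mathscr{E}$, which is exactly what the ideal property guarantees; without that observation one would have to worry about the restriction losing information, but it does not.
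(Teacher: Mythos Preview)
Your argument is correct and is precisely the direct verification via matching families that the paper explicitly acknowledges works (``This can be checked directly by Definition \ref{def of sheaves}''), but then sets aside in favour of a different route. The paper instead invokes Corollary \ref{another equivalence}, rephrasing the sheaf condition as the isomorphism
\[
\Hom_{\underline{\mathbb{Z}}_{\D} \Mod} (\mathbb{Z}\D(x, -), V) \cong \Hom_{\underline{\mathbb{Z}}_{\D} \Mod} (\mathbb{Z}S, V)
\]
for all $x\in\Ob(\D)$ and $S\in J_\D(x)$, and then uses the ideal property of $\mathscr{E}$ to identify this Hom-set with the corresponding one over $\mathscr{E}$ when $x\in\Ob(\mathscr{E})$. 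Both proofs rest on the same structural fact (that $\mathscr{E}$ is an ideal, so $\D(x,-)=\mathscr{E}(x,-)$ and $J_\D(x)=J_\mathscr{E}(x)$ for $x\in\Ob(\mathscr{E})$); yours stays at the level of elements and amalgamations and is self-contained, while the paper's version advertises the Hom-theoretic reformulation developed in Section~4. Neither approach offers a real advantage here beyond taste.
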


\begin{proof}
This can be checked directly by Definition \ref{def of sheaves}. We give a representation theoretic proof. By Corollary \ref{another equivalence}, $V$ is a sheaf of modules on $(\boldsymbol{\mathscr{D}}^{\op}, \mathcal{O}_{\D})$ if and only if the inclusion $\mathbb{Z}S \to \mathbb{Z}\D(x, -)$ induces an isomorphism
\begin{equation}
\Hom_{\underline{\mathbb{Z}}_{\D} \Mod} (\mathbb{Z}\D(x, -), V) \cong \Hom_{\underline{\mathbb{Z}}_{\D} \Mod} (\mathbb{Z}S, V),
\end{equation}
for each $x \in \Ob(\D)$ and each $S \in J_{\D}(x)$.  If $x \in \Ob(\D) \setminus \Ob(\mathscr{E})$, then $S = \D(x, -)$, so the above isomorphism automatically holds. When $x \in \Ob(\mathscr{E})$, since $\mathscr{E}$ is an ideal of $\D$, we deduce that $J_{\D}(x) = J_{\mathscr{E}}(x)$, $\D(x, -) = \mathscr{E}(x, -)$ (viewed as sets of morphisms), and the inclusion functor $\mathscr{E} \to \D$ induces the following natural isomorphism
\[
\Hom_{\underline{\mathbb{Z}}_{\D} \Mod} (\mathbb{Z}S, V) \cong \Hom_{\underline{\mathbb{Z}}_{\mathscr{E}} \Mod} (\mathbb{Z}S, V_{\mathscr{E}})
\]
for $S \in J_{\D}(x)$. Thus formula (6.1) holds if and only if
\[
\Hom_{\underline{\mathbb{Z}}_{\mathscr{E}} \Mod} (\mathbb{Z}\mathscr{E}(x, -), V) \cong \Hom_{\underline{\mathbb{Z}}_{\mathscr{E}} \Mod} (\mathbb{Z}S, V_{\mathscr{E}})
\]
for each $x \in \Ob(\mathscr{E})$ and each $S \in J_{\mathscr{E}} (x)$, and if and only if $V_{\mathscr{E}}$ is a sheaf of modules on $(\boldsymbol{\mathscr{E}}^{\op}, \mathcal{O}_{\mathscr{E}})$.
\end{proof}

Consequently, to classify sheaves of modules on all ringed sites $(\BC^{\op}, \mathcal{O})$, it is enough to classify sheaves of modules on the ringed atomic site $(\boldsymbol{\mathscr{E}}^{\op}, \mathcal{O}_{\mathscr{E}})$. However, this still seems to be a hard problem. For some examples, please refer to \cite{DLLX}.

\begin{remark} \label{type Z}
Replacing the first condition in the definition of EI categories of type $\N$ by the following one: objects in $\C$ are parameterized by $n \in \mathbb{Z}$, we can define \textit{EI categories of type $\mathbb{Z}$}. The reader can mimic our approach to classify Grothendieck topologies on $\Co$ where $\C$ is an EI category of type $\mathbb{Z}$. Explicitly, we have:
\begin{enumerate}
\item Generic Grothendieck topologies on $\Co$ are parameterized by functions
\[
\boldsymbol{d}: \Ob(\C) = \mathbb{Z} \to \N \cup \{\infty\}
\]
satisfying the following condition: if $d_n \neq 0$, then $d_{n+1} = d_n - 1$.

\item Non-generic Grothendieck topologies on $\Co$ are parameterized by functions
\[
\boldsymbol{d}: \Ob(\C) = \mathbb{Z} \to \N \cup \{-\infty\}
\]
satisfying the following conditions: $d_n = -\infty$ for $n \gg 0$; and if $d_n \neq 0$, then $d_{n+1} = d_n - 1$.

\item A Grothendieck topology $J$ on $\Co$ is rigid if and only if the corresponded function $\boldsymbol{d}$ satisfies the following condition: $d_n \neq \infty$ for all $n \in \mathbb{Z}$.
\end{enumerate}
One can deduce the above results from Lemma \ref{topologies on coideals}. Indeed, let $J$ be a Grothendieck topology on $\Co$. Then for each $n \in \mathbb{Z}$, the full subcategory $\C_{\geqslant n}$ of objects $m \geqslant n$ is an ideal of $\C$ and an EI category of type $\N$. These results follows via applying Propositions \ref{generic topologies}, \ref{nongeneric topologies} and \ref{rigid topologies for type n} to the restriction of $J$ to $\C_{\geqslant n}$. When $\C$ is the poset $\mathbb{Z}$, they have been proved in \cite[Proposition B.31]{Lin}.
\end{remark}

\section{Applications}

In this section we describe some applications of our results in representation theory of combinatorial categories and groups.

\subsection{Representations of infinite full subcategories of $\FI$ and $\VI_q$}

Throughout this subsection let $k$ be a commutative noetherian ring, let $\C$ be a skeleton of the category $\FI$ of finite sets and injections, or the category $\VI_q$ of finite dimensional vector spaces over a finite field $\mathbb{F}_q$ and $\mathbb{F}_q$-linear injections, and let $\D$ be a full subcategory of $\C$ with infinitely many objects. Denote objects in $\C$ by $\boldsymbol{n}$, $n \in \mathbb{N}$, which is isomorphic to $[n] = \{1, 2, \ldots, n\}$ for $\FI$ or $\mathbb{F}_q^n$ for $\VI_q$. Accordingly, let $P(n) $ be $k\C(\boldsymbol{n}, -)$.

One may impose a rigid Grothendieck topology $J_r$ on $\C^{\op}$ such that $J_r$-irreducible objects in $\C$ are precisely objects in $\D$. On the other hand, it is easy to check that $\C^{\op}$ satisfies the right Ore condition, so one can also impose the atomic Grothendieck topology $J_a$ on it.

\begin{lemma} \label{canonical}
Notation as above. For each $n \in \mathbb{N}$, $P(n)$ is a sheaf of modules on the ringed site $(\C^{\op}, \, J_a, \, \underline{k})$, where $\underline{k}$ is the constant structure sheaf, and hence a sheaf of modules on the ringed site $(\C^{\op}, \, J_d, \, \underline{k})$.
\end{lemma}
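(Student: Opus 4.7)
The plan is to verify the sheaf axiom of Definition \ref{sheaf def} directly on matching families, exploiting two structural features of $\C$: every morphism in $\C$ is a monomorphism (a set-injection or a linear injection), and $\C$ satisfies the right Ore condition---which is both why $J_a$ defines a Grothendieck topology and why $J_d = J_a$, making the second assertion of the lemma automatic. Separatedness, equivalent to $J_a$-torsion-freeness by Lemma \ref{preliminary equivalences}, is immediate: if $v = \sum_i c_i g_i \in P(n)_{\boldsymbol{m}} = k\C(\boldsymbol{n}, \boldsymbol{m})$ is nonzero with distinct $g_i$ and nonzero $c_i$, then for any morphism $f: \boldsymbol{m} \to \boldsymbol{k}$ the images $f g_i$ remain pairwise distinct by left-cancellation, so $f \cdot v = \sum_i c_i (f g_i) \neq 0$, and no nonempty sieve annihilates $v$.

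For existence of amalgamations, fix a matching family $\{v_f\}_{f \in S}$ for a nonempty sieve $S$ on $\boldsymbol{m}$, choose any $f_0 \in S$ with codomain $\boldsymbol{k}_0$, and expand $v_{f_0} = \sum_i c_i g_i$ with $g_i \in \C(\boldsymbol{n}, \boldsymbol{k}_0)$ distinct. The crucial claim, to be proved below, is that each $g_i$ factors through $f_0$---equivalently, $\mathrm{im}(g_i) \subseteq \mathrm{im}(f_0)$ in $\FI$ or $\VI_q$. Granting this, write $g_i = f_0 h_i$ for the unique $h_i \in \C(\boldsymbol{n}, \boldsymbol{m})$ and set $v = \sum_i c_i h_i \in P(n)_{\boldsymbol{m}}$, so that $f_0 v = v_{f_0}$ by construction. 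For any other $f \in S$, the right Ore condition furnishes morphisms $p, p'$ with $p f_0 = p' f$, and the matching condition gives
\[
p' \cdot (f v) = (p f_0) v = p \cdot (f_0 v) = p \cdot v_{f_0} = v_{p f_0} = v_{p' f} = p' \cdot v_f,
\]
after which left-cancellation by the monomorphism $p'$ yields $f v = v_f$, confirming that $v$ is the required amalgamation (unique by separatedness).

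The factorization claim is proved by contradiction. Suppose some $g_j$ has $\mathrm{im}(g_j) \not\subseteq \mathrm{im}(f_0)$ and pick $a \in \mathrm{im}(g_j) \setminus \mathrm{im}(f_0)$. I would construct a pair $h_1, h_2 \in \C(\boldsymbol{k}_0, \boldsymbol{k_0+1})$ that agree on $\mathrm{im}(f_0)$---so that $h_1 f_0 = h_2 f_0$, forcing $h_1 v_{f_0} = v_{h_1 f_0} = v_{h_2 f_0} = h_2 v_{f_0}$ by matching---but push $a$ into a ``fresh'' coordinate outside $\mathrm{im}(h_1)$. For $\FI$, take $h_1$ to be the standard inclusion and $h_2$ identical to $h_1$ except that $h_2(a) = k_0 + 1$. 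For $\VI_q$, extend a basis of $\mathrm{im}(f_0)$ to a basis of $\boldsymbol{k}_0$ whose $(m+1)$-st vector is $a$, take $h_1$ to be the standard inclusion, and let $h_2$ agree with $h_1$ on every basis vector except $a$, sending $a$ instead to $h_1(a) + e_{k_0+1}$. In both cases, $h_2 g_i$ has image meeting the fresh coordinate precisely when $a$ appears in the $a$-direction of $\mathrm{im}(g_i)$, while every $h_1 g_i$ has image contained in $\mathrm{im}(h_1)$; splitting the equation $h_1 v_{f_0} = h_2 v_{f_0}$ in the basis $\C(\boldsymbol{n}, \boldsymbol{k_0+1})$ according to whether or not the fresh coordinate is hit then forces $c_j = 0$, the desired contradiction. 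The main obstacle is this last coefficient comparison, particularly in $\VI_q$: one must verify that distinct $g_i$ with nonzero $a$-component yield distinct injections $h_2 g_i$ whose images genuinely escape $\mathrm{im}(h_1)$, so that the two groups of basis morphisms in $\C(\boldsymbol{n}, \boldsymbol{k_0+1})$ are disjoint and no unforeseen cancellations spoil the argument.
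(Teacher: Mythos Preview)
Your argument is correct and the acknowledged obstacle in the $\VI_q$ case resolves cleanly: with $W$ the span of the chosen basis vectors other than $a$, the index set $I = \{i : \mathrm{im}(g_i) \not\subseteq W\}$ contains $j$, and for $i \in I$ the morphism $h_2 g_i$ has image not contained in $\mathrm{im}(h_1)$ while $h_1 g_i$ does; since $h_1, h_2$ are monomorphisms, both families $\{h_1 g_i\}_{i \in I}$ and $\{h_2 g_i\}_{i \in I}$ are internally distinct and mutually disjoint in $\C(\boldsymbol{n}, \boldsymbol{k_0+1})$, so the equality of the two sums forces every $c_i$ with $i \in I$ to vanish.

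Your route, however, is entirely different from the paper's. The paper does not touch matching families at all: it invokes Artin's equivalence $\Sh(\C^{\op}, J_a, \underline{k}) \simeq kG\dMod$ for $G = \varinjlim_n S_n$ or $\varinjlim_n \mathrm{GL}_n(\mathbb{F}_q)$, and identifies $P(n)$ with the discrete permutation module $k(G/H_n)$, whence it is a sheaf by construction. (A remark following the proof also records a third route via $J_a$-saturatedness, citing \cite{LR} for $\FI$ and \cite{Nag1} for $\VI_q$.) Your direct verification is elementary and self-contained---it needs no external equivalences and works over any commutative ring---at the cost of a hands-on combinatorial argument specific to $\FI$ and $\VI_q$; the paper's argument is shorter and more conceptual but imports a substantial black box. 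One small point: you dispatch the ``hence'' clause via $J_d = J_a$, which is valid as written, but the paper's proof (and the subsequent applications) actually work with the rigid topology $J_r$ determined by $\D$; there the implication follows from $J_r \subseteq J_a$, which holds because $\D$ is infinite.
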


\begin{proof}
Since $\D$ has infinitely many objects, it follows that for each object $\boldsymbol{n}$ in $\C$, there are morphisms starting from it and ending at a certain object in $\D$. Consequently, $J_r(n)$ is nonempty, and hence $J_r(n) \subseteq J_a(n)$ for each $n \in \mathbb{N}$. Consequently, every sheaf of modules over $(\C^{\op}, \, J_a, \, \underline{k})$ is also a sheaf of modules over $(\C^{\op}, \, J_d, \, \underline{k})$, so one only needs to show the first statement.

This is clear. Indeed, Artin's theorem \cite[III. 9, Theorems 1 and 2]{MM} provides an equivalence
\[
\Sh(\C^{\op}, \, J_a, \, \underline{k}) \simeq kG \dMod,
\]
where $kG \dMod$ is the category of \textit{discrete representations} of $G$, and $G$ is the infinite symmetric group $\varinjlim_n S_n$ or the infinite general linear group $\varinjlim_n \mathrm{GL}_n (\mathbb{F}_q)$. An explicit computation shows that $P(n)$ is the sheaf corresponded to the discrete $kG$-module $k(G/H_n)$, where $H_n$ is the stabilizer subgroup of $G$ fixing every element in $\boldsymbol{n} = [n]$ or $\boldsymbol{n} = \mathbb{F}_q^n$. For details, see \cite[Section 5]{DLLX}.
\end{proof}

\begin{remark}
One can also prove the conclusion by showing that $P(n)$ is $J_a$-saturated. For $\FI$, this is established in \cite[Theorem C]{LR}; for $\VI_q$, under the extra assumption that $q$ is invertible in $k$, this follows from \cite[Corollary 4.22]{Nag1}.
\end{remark}

It follows from this lemma that both $J_d$ and $J_a$ are \textit{subcanonical}; that is, representable presheaves are sheaves. Therefore, the following set is a family of generators of $\Sh(\C^{\op}, \, J_a, \, \underline{k})$:
\[
\{ P(n) \mid n \in \mathbb{N} \},
\]
so we can make the following definition.

\begin{definition}
We say that $V \in \Sh(\C^{\op}, \, J_a, \, \underline{k})$ is \textit{finitely generated} if there is a surjection
\[
\bigoplus_n P(n)^{c_n} \twoheadrightarrow V
\]
such that the sum of all multiplicities $c_n$ is finite. Finitely generated sheaves in $\Sh(\C^{\op}, \, J_d, \, \underline{k})$ are defined similarly.
\end{definition}

In the rest of this subsection we will frequently use the following easy observation: a sheaf homomorphism $V \to W$ is injective (resp., surjective) if and only if it is injective (resp., surjective) as a presheaf homomorphism. Consequently, $V \in \Sh(\C^{\op}, \, J_a, \, \underline{k})$ is finitely generated if and only if viewed as a $\C$-module it is finitely generated. The same conclusion holds for $\Sh(\C^{\op}, \, J_d, \, \underline{k})$.

\begin{proposition} \label{noetherianity}
Every finitely generated $\D$-module over $k$ is noetherian.
\end{proposition}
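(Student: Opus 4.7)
I would reduce noetherianity of finitely generated $\D$-modules to the known noetherianity of finitely generated $\C$-modules (Church--Ellenberg--Farb--Nagpal \cite{CEFN} for $\C = \FI$ and Sam--Snowden \cite{SS, SS1} for $\C = \VI_q$) via the induction functor $\ind \colon k\D \Mod \to k\C \Mod$, i.e.\ the left Kan extension along the full inclusion $\iota \colon \D \hookrightarrow \C$. The formal properties that make this work are: (i) the restriction functor $\res \colon k\C \Mod \to k\D \Mod$ is exact, being objectwise evaluation; (ii) since $\iota$ is fully faithful, for each $y \in \Ob(\D)$ the comma category used to compute $(\ind N)_y$ has the terminal object $(y, \mathrm{id}_y)$, forcing $\res \circ \ind \cong \mathrm{id}_{k\D \Mod}$; (iii) as a left adjoint $\ind$ is right exact and sends $k\D(x, -)$ to $k\C(x, -) = P(x)$ for each $x \in \Ob(\D)$.

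Given a finitely generated $\D$-module $M$, I would fix a surjection $\bigoplus_{i=1}^{s} k\D(x_i, -) \twoheadrightarrow M$ with each $x_i \in \Ob(\D)$. Applying the right exact functor $\ind$ yields a surjection $\bigoplus_{i=1}^{s} P(x_i) \twoheadrightarrow \ind M$ in $k\C \Mod$, so $\ind M$ is a finitely generated $\C$-module, and therefore noetherian by the cited theorems.

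To conclude noetherianity of $M$, I would take an arbitrary ascending chain $M_1 \subseteq M_2 \subseteq \cdots \subseteq M$ of $\D$-submodules, and for each $i$ let $N_i \subseteq \ind M$ denote the image of the canonical morphism $\ind M_i \to \ind M$ inside $k\C \Mod$. The $N_i$ form an ascending chain of sub-$\C$-modules of $\ind M$, which stabilizes by noetherianity. Applying the exact functor $\res$ and using $\res \circ \ind \cong \mathrm{id}$, one computes $\res(N_i) = \mathrm{im}(M_i \to M) = M_i$, so the original chain $\{M_i\}$ stabilizes as well.

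The single nontrivial ingredient, and thus the main potential obstacle, is invoking the deep noetherianity theorem for $\FI$- or $\VI_q$-modules over the noetherian ring $k$; the reduction itself is essentially formal, hinging on fullness of the inclusion $\D \hookrightarrow \C$ to guarantee $\res \circ \ind \cong \mathrm{id}$, together with the exactness of $\res$ needed to transfer the stabilization back down.
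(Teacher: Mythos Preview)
Your proof is correct, but it follows a genuinely different route from the paper's. The paper argues via the \emph{coinduction} functor (right Kan extension) and the sheaf-theoretic equivalence $\Sh(\C^{\op}, J_r, \underline{k}) \simeq \D \Mod$ from the Comparison Lemma: for a submodule $V \subseteq Q(n)$ it forms $\coind(V) \subseteq \coind(Q(n)) \cong P(n)$, invokes noetherianity of $\C$-modules to find a finite presentation of $\coind(V)$ by $P(i)$'s, and then restricts back, needing an extra step to check that $\res(P(i))$ is a finitely generated $\D$-module even when $\boldsymbol{i} \notin \Ob(\D)$. Your argument instead uses the \emph{induction} functor (left Kan extension): the identity $\res \circ \ind \cong \mathrm{id}$, valid because $\D \hookrightarrow \C$ is fully faithful, lets you pull stabilization of chains in $\ind M$ directly back to $M$ via the exact $\res$. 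Your approach is more elementary and self-contained---it never touches the sheaf machinery and avoids the case analysis on $\res(P(i))$---while the paper's approach has the virtue of illustrating the sheaf-theoretic framework that is the paper's central theme.
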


\begin{proof}
It suffices to prove the following conclusion: for each $\boldsymbol{n} \in \Ob(\D)$, every submodule $V$ of $Q(n) = k\D(\boldsymbol{n}, -)$ is finitely generated. By \cite[C2.2, Theorem 2.2.3]{Jo} and Remark \ref{comparison lemma}, one has
\[
\Sh(\C^{\op}, \, J_r, \, \underline{k}) \simeq \D \Mod,
\]
for which an explicit equivalence is given by the restriction functor $\res$ from the sheaf category to the module category and the coinduction functor $\mathrm{coind}$ (the right Kan extension) in the other direction. Accordingly, $\coind(V)$ is a subsheaf of $\coind(Q(n))$ in $\Sh(\C^{\op}, \, J_r, \, \underline{k})$. But $P(n) \in \Sh(\C^{\op}, \, J_r, \, \underline{k})$ by Lemma \ref{canonical} and $\res (P(n)) = Q(n)$, it follows that $\coind (Q(n)) \cong P(n)$. Thus without loss of generality we can assume that $\coind(V)$ is a subsheaf of $P(n)$.

By the noetherianity results in \cite{CEFN, SS1}, $\coind(V)$ is a finitely generated $\C$-module, so one has a surjection
\[
\bigoplus_{i \in \mathbb{N}} P(i)^{c_i} \longrightarrow \coind(V)
\]
of $\C$-modules such that the sum of these $c_i$'s is finite. By the above observation, this is also a surjection in $\Sh(\C^{\op}, \, J_r, \, \underline{k})$. Applying the restriction functor, we obtain a surjection
\[
\bigoplus_{i \in \mathbb{N}} \res(P(i))^{c_i} \longrightarrow \res(\coind(V)) \cong V
\]
of $\D$-modules. It remains to show that each $\res(P(i))$ is a finitely generated $\D$-module.

In the case $\boldsymbol{i} \in \Ob(\D)$, one has $\res(P(i)) = Q(i)$, so the conclusion holds trivially. In the case $\boldsymbol{i} \notin \Ob(\D)$, let $\boldsymbol{j}$ be the minimal number such that $j > i$ and $\boldsymbol{j} \in \Ob(\D)$, which always exist since $\D$ is an infinite subcategory of $\C$. Then the $\D$-module $\res(P(i))$ is generated by its value on $\boldsymbol{j}$, and hence is finitely generated as well. The proof is completed.
\end{proof}

Let $\sh(\D^{\op}, \, J_a, \, \underline{k})$ be the category of finitely generated objects in $\Sh(\D^{\op}, \, J_a, \, \underline{k})$. The above proposition tells us that $\sh(\D^{\op}, \, J_a, \, \underline{k})$ is an abelian category; for details, see the comment before \cite[Remark 3.15]{DLLX}.

In the rest of this subsection let $k$ be a field of characteristic 0.

\begin{proposition} \label{self-injectivity}
Every indecomposable projective $\D$-module over $k$ is also injective.
\end{proposition}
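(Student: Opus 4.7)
The plan is to transfer the known injectivity of $P(n) = k\C(\boldsymbol{n},-)$ as a $\C$-module across the equivalence $\Sh(\C^{\op}, J_r, \underline{k}) \simeq \D\Mod$ supplied by the rigid topology $J_r$ of Lemma \ref{canonical} (whose irreducible objects are precisely $\Ob(\D)$). First, since $k$ has characteristic $0$ and each endomorphism group $\C(\boldsymbol{n},\boldsymbol{n})$ is finite, Maschke's theorem makes $k\C(\boldsymbol{n},\boldsymbol{n})$ semisimple, so every indecomposable projective $\D$-module is a direct summand of some $Q(n) = k\D(\boldsymbol{n},-)$ with $\boldsymbol{n}\in\Ob(\D)$. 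Because direct summands of injective modules are injective, it suffices to show that each $Q(n)$ is an injective $\D$-module.

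Next I would use the restriction equivalence $\res\colon \Sh(\C^{\op}, J_r, \underline{k}) \xrightarrow{\simeq} \D\Mod$, with quasi-inverse $\coind$, established in Theorem \ref{classify sheaves} and used already in the proof of Proposition \ref{noetherianity}. Under this equivalence one has $\res(P(n)) = Q(n)$. Since any equivalence of abelian categories preserves injective objects, it is enough to verify that $P(n)$ is injective in $\Sh(\C^{\op}, J_r, \underline{k})$.

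To obtain this, I invoke Corollary \ref{injective sheaves}: an object of $\Sh(\C^{\op}, J_r, \underline{k})$ is injective if and only if, regarded as a $\C$-module, it is $J_r$-torsion free and injective in $\underline{k}\Mod$. The first property is immediate from Lemma \ref{canonical} combined with Lemma \ref{preliminary equivalences}, since $P(n)$ is already a sheaf. The second property, that $P(n)$ is an injective $\C$-module when $k$ is a field of characteristic $0$, is exactly the self-injectivity theorem of Gan--Li \cite{GL} (and of Sam--Snowden \cite{SS} in the case $\C=\FI$) recalled in the introduction.

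Putting these pieces together yields the result: $P(n)$ is an injective sheaf, so $Q(n) = \res(P(n))$ is an injective $\D$-module, whence every indecomposable projective $\D$-module is injective. The argument is essentially a bookkeeping exercise; the main conceptual point—and the reason no new hard work is needed—is that the sheaf-theoretic criterion of Corollary \ref{injective sheaves} is precisely the bridge which lets one import the $\C$-module injectivity of $P(n)$ into the $\D$-module world via the rigid-topology equivalence, without any recourse to shift functors.
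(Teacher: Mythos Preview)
Your proposal is correct and follows essentially the same route as the paper: reduce to showing each $Q(n)$ is injective, transport across the equivalence $\Sh(\C^{\op},J_r,\underline{k})\simeq \D\Mod$, and then apply Corollary \ref{injective sheaves} together with the Gan--Li injectivity of $P(n)$. The only cosmetic difference is in the verification that $P(n)$ is $J_r$-torsion free: you deduce it from Lemma \ref{canonical} via ``sheaf $\Rightarrow$ separated $\Rightarrow$ torsion free'', whereas the paper observes directly that every morphism of $\C$ is monic (so $P(n)$ is $J_a$-torsion free) and then uses Lemma \ref{correspondence} and $J_r\subseteq J_a$ to descend to $J_r$.
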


\begin{proof}
It suffices to show that $Q(n) = k\D(\boldsymbol{n}, -)$ is injective for each $\boldsymbol{n} \in \Ob(\D)$. Using the equivalence from $\D \Mod$ to $\Sh(\C^{\op}, \, J_r, \, \underline{k})$ given by the coinduction functor, one only needs to show the injectivity of $P(n) \cong \coind(Q(n))$ in $\Sh(\C^{\op}, \, J_r, \, \underline{k})$. By Corollary \ref{injective sheaves}, this is equivalent to saying that $P(n)$ is a $J_r$-torsion free injective $\C$-module. But it has been proved in \cite{GL} that $P(n)$ is an injective $\C$-module, so it remains to show that $P(n)$ is $J_r$-torsion free. This is clear. Indeed, $P(n)$ is $J_a$-torsion free as every morphism in $\C$ is monic, so it is $J_r$-torsion free as well by Lemma \ref{correspondence}. This finishes the proof.
\end{proof}

To prove other statements in Theorem \ref{Theorem 6}, we turn to the atomic Grothendieck topology $J_a$ on $\C^{\op}$. Since $\D$ has infinitely many objects, $\D^{\op}$ is a \textit{$J_a$-dense subcategory} in the sense of \cite[C2.2, Definition 2.2.1]{Jo}. Furthermore, the \textit{induced} Grothendieck topology $K$ on $\D^{\op}$ via setting
\[
K(n) = \{ S \cap \D(\boldsymbol{n}, -) \mid S \in J_a(n)\}
\]
coincides with the atomic Grothendieck topology. Thus by abuse of notation we also use $J_a$ to denote this induced Grothendieck topology. Applying \cite[C2.2, Theorem 2.2.3]{Jo}, we obtain an immediate result:

\begin{lemma} \label{equivalence of atomic sheaves}
There is an equivalence
\[
\Sh(\C^{\op}, \, J_a, \, \underline{k}) \simeq \Sh(\D^{\op}, \, J_a, \, \underline{k}).
\]
given by the restriction functor and the coinduction functor.
\end{lemma}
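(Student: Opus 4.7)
The plan is to deduce this directly from the Comparison Lemma \cite[C2.2, Theorem 2.2.3]{Jo}, whose hypotheses are precisely those flagged in the paragraph preceding the statement. Concretely, two things need to be verified: that the inclusion $\D^{\op} \hookrightarrow \C^{\op}$ exhibits $\D^{\op}$ as a $J_a$-dense subcategory, and that the atomic topology on $\D^{\op}$ coincides with the topology induced from $J_a$ on $\C^{\op}$.

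For $J_a$-density, I would use that $\D$ contains infinitely many objects: for any $\boldsymbol{n} \in \Ob(\C)$ there is some $\boldsymbol{m} \in \Ob(\D)$ with $m \geqslant n$, so $\C(\boldsymbol{n}, \boldsymbol{m}) \neq \emptyset$, and the sieve generated by morphisms from $\boldsymbol{n}$ into $\D$ is therefore a nonempty subfunctor of $\C(\boldsymbol{n}, -)$, hence an element of $J_a(n)$. For the identification of the induced topology with the atomic one on $\D^{\op}$, I would check both inclusions. On one hand, any nonempty $T \in J_a(n)$ contains some $h : \boldsymbol{n} \to \boldsymbol{r}$, which can be post-composed with a morphism $\boldsymbol{r} \to \boldsymbol{s}$ for some $\boldsymbol{s} \in \Ob(\D)$ to produce an element of $T \cap \D(\boldsymbol{n}, -)$, so this intersection is a nonempty subfunctor of $\D(\boldsymbol{n}, -)$. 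Conversely, any nonempty subfunctor $S$ of $\D(\boldsymbol{n}, -)$ arises as $T \cap \D(\boldsymbol{n}, -)$ for $T$ equal to the sieve in $\C(\boldsymbol{n}, -)$ generated by $S$, since $S$ is already closed under post-composition by morphisms of $\D$.

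With these two facts in hand, the Comparison Lemma yields the desired equivalence of sheaf categories, and it identifies the quasi-inverse pair as restriction along $\iota : \D \hookrightarrow \C$ together with its right adjoint, the right Kan extension $\coind = \mathrm{Ran}_{\iota}$; one could alternatively verify directly, using Corollary \ref{another equivalence}, that restriction sends sheaves to sheaves and that the right Kan extension of a sheaf on $(\D^{\op}, J_a)$ remains a sheaf on $(\C^{\op}, J_a)$. I do not expect serious obstacles: the argument is essentially a bookkeeping verification of the dense-subsite hypothesis followed by invocation of a standard theorem, and the only mild subtlety is the nonemptiness of the relevant intersections, which reduces to the cofinality of $\D$ inside $\C$.
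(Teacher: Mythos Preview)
Your proposal is correct and follows exactly the paper's approach: the paragraph preceding the lemma already records that $\D^{\op}$ is $J_a$-dense and that the induced topology on $\D^{\op}$ is again the atomic one, and the paper then simply invokes \cite[C2.2, Theorem 2.2.3]{Jo}. Your write-up merely spells out in slightly more detail the two verifications that the paper leaves as remarks.
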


\begin{remark}
Artin's theorem provides another viewpoint to interpret this equivalence. For instance, if $G = \varinjlim_n S_n$, since both
\[
\{H_n = \{g \in G \mid g(i) = i, \, \forall i \in [n] \} \mid \boldsymbol{n} \in \Ob(\C) \}
\]
and
\[
\{H_n = \{g \in G \mid g(i) = i, \, \forall i \in [n] \} \mid \boldsymbol{n} \in \Ob(\D) \}
\]
are cofinal systems of open subgroups of $G$, one has
\[
\Sh(\C^{\op}, \, J_a, \, \underline{k}) \simeq kG \dMod \simeq \Sh(\D^{\op}, \, J_a, \, \underline{k}).
\]
For details, please refer to \cite[Section 5]{DLLX}.

The above equivalence gives another proof of Proposition \ref{self-injectivity}. For $\boldsymbol{n} \in \Ob(\D)$, since $P(n)$ is a $J_a$-torsion free injective $\C$-module, it is also injective in $\Sh(\C^{\op}, \, J_a, \, \underline{k})$ by Corollary \ref{injective sheaves}. Applying the restriction functor, we deduce that $Q(n) \cong \res (P(n))$ is injective in $\Sh(\D^{\op}, \, J_a, \, \underline{k})$. Again, by Corollary \ref{injective sheaves}, $Q(n)$ is an injective $\D$-module.
\end{remark}

\begin{lemma} \label{embedding}
Let $V$ be a finitely generated $J_a$-torsion free $\D$-module. Then there is an injection $V \to \res(P)$ with $P$ a finitely generated projective $\C$-module.
\end{lemma}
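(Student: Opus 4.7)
The plan is to transfer the problem from $\D$-modules to $\C$-modules via the sheaf equivalence of Lemma~\ref{equivalence of atomic sheaves}, and then to invoke known structural results about $\C$-modules. Fix generators $v_1, \ldots, v_s$ of $V$ with $v_j \in V_{\boldsymbol{n}_j}$ and $\boldsymbol{n}_j \in \Ob(\D)$.

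Since $V$ is $J_a$-torsion free, the unit $V \to V^{\sharp}$ of sheafification in $\Sh(\D^{\op}, J_a, \underline{k})$ is injective (Lemma~\ref{preliminary equivalences}); by Lemma~\ref{equivalence of atomic sheaves}, there is a unique $W \in \Sh(\C^{\op}, J_a, \underline{k})$ with $\res(W) \cong V^{\sharp}$. Identifying $V \subseteq \res(W)$, each $v_j$ also lies in $W_{\boldsymbol{n}_j}$. Let $W' \subseteq W$ be the $\C$-submodule of $W$ generated by $v_1, \ldots, v_s$. Because $\D$ is a full subcategory of $\C$, so that $\C(\boldsymbol{n}_j, \boldsymbol{l}) = \D(\boldsymbol{n}_j, \boldsymbol{l})$ whenever $\boldsymbol{l} \in \Ob(\D)$, a direct computation gives $(W')_{\boldsymbol{l}} = V_{\boldsymbol{l}}$ for every such $\boldsymbol{l}$, that is, $\res(W') = V$. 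Moreover, $W$ is $J_a$-torsion free by Theorem~\ref{characterize sheaves}, hence so is its $\C$-submodule $W'$. Thus $W'$ is a finitely generated, $J_a$-torsion-free $\C$-module whose restriction to $\D$ is $V$.

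It then suffices to embed $W'$ into a finitely generated projective $\C$-module $P$; once this is done, $V = \res(W') \hookrightarrow \res(P)$ is the desired injection. Here the representation theory of $\C = \FI$ or $\VI_q$ enters: by \cite{CEFN, SS1}, $W'$ is noetherian, so it has finite uniform dimension and an injective hull $E(W') = \bigoplus_{i=1}^r E_i$ that is a finite direct sum of indecomposable injectives in $\C \Mod$. Since the inclusion $W' \hookrightarrow E(W')$ is essential and $W'$ is $J_a$-torsion free, $E(W')$ is $J_a$-torsion free as well; in particular no $E_i$ can be finite-dimensional, because any finite-dimensional $\C$-module is $J_a$-torsion (as $\D$ is infinite, an element in such a module can be killed by a morphism into an object of $\D$ large enough for the module to vanish). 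By the classification of indecomposable injective $\C$-modules in characteristic zero (see \cite{GL, SS, Nag1}), each $E_i$ is then a direct summand of some $P(m_i)$, and therefore $E(W')$ is a summand of $P := \bigoplus_{i=1}^r P(m_i)$, a finitely generated projective $\C$-module. The main obstacle is precisely this last step, which rests on nontrivial classification results for $\FI$- and $\VI_q$-modules; the preceding sheaf-theoretic bookkeeping is the mechanism that turns those results into the desired embedding of $V$.
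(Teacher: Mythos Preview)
Your proof is correct and follows the same overall strategy as the paper: pass from $\D$ to $\C$ via the sheaf equivalence of Lemma~\ref{equivalence of atomic sheaves}, embed into a finitely generated projective $\C$-module there, then restrict back. The execution differs in two places.

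First, the paper simply replaces $V$ by $V^{\sharp}$, applies $\coind$, and cites \cite[Proposition~7.5]{GL} and \cite[Theorem~4.34]{Nag1} directly to obtain an injection $\coind(V)\hookrightarrow P$ with $P$ finitely generated projective; it does not pause to exhibit a finitely generated $\C$-module. Your construction of the submodule $W'\subseteq W$ generated by the chosen generators of $V$, together with the observation $\res(W')=V$, is a nice explicit way to guarantee finite generation on the $\C$-side before invoking any structural result; this makes the argument slightly more self-contained at that step.

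Second, for the embedding $W'\hookrightarrow P$ itself, the paper appeals to the cited propositions as black boxes, whereas you reprove this via noetherianity $\Rightarrow$ finite uniform dimension $\Rightarrow$ the injective hull $E(W')$ is a finite direct sum of indecomposable injectives, none finite-dimensional, each therefore a summand of some $P(m_i)$ by the classification of indecomposable injectives in \cite{GL,SS,Nag1}. This is a valid alternative, but note that it does not really lower the input: the classification of indecomposable injectives in characteristic~$0$ is of the same depth as (indeed, essentially packages) the embedding results the paper cites. So the two arguments are logically on par; yours trades one citation for another and adds the $W'$ bookkeeping.
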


\begin{proof}
Since $V$ is $J_a$-torsion free, one may obtain an injection $V \to V^{\sharp}$ in $\D \Mod$, where $V^{\sharp}$ is the sheafification of $V$ with respect to $J_a$. Thus without loss of generality we can assume that $V$ is contained in $\Sh(\D^{\op}, \, J_a, \, \underline{k})$. Accordingly, $\coind(V)$ is contained in $\Sh(\C^{\op}, \, J_a, \, \underline{k})$.

By \cite[Proposition 7.5]{GL} and \cite[Theorem 4.34]{Nag1}, there is an injection $\coind(V) \to P$ in $\C \Mod$, where $P$ is a finitely generated projective $\C$-module. This is also an injection in $\Sh(\C^{\op}, \, J_a, \, \underline{k})$ by the above mentioned observation. By Lemma \ref{equivalence of atomic sheaves}, one obtains an injection $V \to \res(P)$ in $\Sh(\D^{\op}, \, J_a, \, \underline{k})$, which is the desired injection in $\D \Mod$.
\end{proof}

Now we finish the proof of Theorem \ref{Theorem 6}.

\begin{proposition}
Let $k$ be a field of characteristic 0. Then:
\begin{enumerate}
\item the category $\D \module$ of finitely generated $\D$-modules has enough injectives;

\item up to isomorphism, an indecomposable injective $\D$-module is one of the following three:
\begin{itemize}
\item a summand of $k\D(\boldsymbol{n}, -)$ for a certain $\boldsymbol{n} \in \Ob(\D)$,
\item a summand of the restriction of $k\C(\boldsymbol{n}, -)$ for a certain $\boldsymbol{n} \in \Ob(\C) \setminus \Ob(\D)$,
\item a finite dimensional injective $\D$-module;
\end{itemize}

\item one has the following equivalences
\[
\D \module / \D \fdmod \simeq \C \fdmod.
\]

\item every finitely generated $\D$-module has finite injective dimension.
\end{enumerate}
\end{proposition}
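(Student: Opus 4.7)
My plan is to handle all four parts together, using the sheaf-theoretic equivalence of Lemma \ref{equivalence of atomic sheaves} as the bridge between the established representation theory of $\C$ and the statements for $\D$. The first and most important preliminary step is to identify $\mathcal{T}(J_a)\cap\D\module$ with $\D\fdmod$, since this makes the relevant torsion pair on $\D\module$ explicit. I would prove this by comparison with $\C$: given a finitely generated $J_a$-torsion $V\in\D\module$, the left Kan extension $\ind V\in\C\module$ is finitely generated, and still $J_a$-torsion in $\C\module$ because the cosieve in $\C$ generated by an annihilator $S\subseteq\D(\boldsymbol{m},-)$ of $v\in V_{\boldsymbol{m}}$ annihilates the corresponding pushforward in $\ind V$. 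Hence $\ind V$ is finite-dimensional by the known results for $\C$ (Sam--Snowden for $\FI$, Nagpal for $\VI_q$). Since $\iota:\D\hookrightarrow\C$ is fully faithful, the adjunction unit $V\to\res\ind V$ is injective on objects of $\D$, so $V$ itself is finite-dimensional.

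For Parts (1) and (2), I would first show that for each $\boldsymbol{n}\in\Ob(\C)$ the restricted projective $\res P(n)$ is a finitely generated, $J_a$-torsion-free injective in $\D\module$: $P(n)$ is injective in $\C\module$ by Gan--Li and $J_a$-torsion-free since morphisms in $\C$ are monomorphisms, hence injective in $\Sh(\C^{\op},J_a,\underline{k})$ by Corollary \ref{injective sheaves}; pushing through Lemma \ref{equivalence of atomic sheaves} and applying Corollary \ref{injective sheaves} on the $\D$-side yields injectivity in $\D\module$, with finite generation achieved by taking the smallest $\boldsymbol{m}\in\Ob(\D)$ above $\boldsymbol{n}$. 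Part (2) then follows directly: an indecomposable injective $I\in\D\module$ is either $J_a$-torsion, hence finite-dimensional of type (c) by the preliminary, or $J_a$-torsion-free, in which case its image $\coind_a I\in\Sh(\C^{\op},J_a,\underline{k})$ is an indecomposable finitely generated $J_a$-torsion-free injective $\C$-module, hence a summand of some $P(n)$ by the $\C$-theory, so $I$ is a summand of $\res P(n)$ and of type (a) or (b). Part (1) then follows from the torsion sequence $0\to V_t\to V\to V_f\to 0$: embed $V_f$ into a finite direct sum of $\res P(n_i)$ by Lemma \ref{embedding}, embed $V_t\in\D\fdmod$ into a finite-dimensional injective envelope (which exists in characteristic zero by standard finite-dimensional representation theory applied to the truncation of $\D$ to the finitely many objects supporting $V_t$), and glue the two via injectivity of the target.

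For Part (3), the plan is the chain of equivalences
\[
\D\module/\D\fdmod \;\simeq\; \sh(\D^{\op},J_a,\underline{k}) \;\simeq\; \sh(\C^{\op},J_a,\underline{k}) \;\simeq\; \C\module/\C\fdmod \;\simeq\; \C\fdmod,
\]
in which the first equivalence combines Corollary \ref{equivalence} with the preliminary identification $\D\fdmod=\mathcal{T}(J_a)\cap\D\module$, the second is the restriction of Lemma \ref{equivalence of atomic sheaves} to finitely generated objects (which makes sense because the generating set $\{P(n)\}$ on the $\C$-side corresponds to $\{\res P(n)\}$ on the $\D$-side), the third is the $\C$-analogue of the first, and the last is the stabilization equivalence in characteristic zero established by Sam--Snowden, Nagpal, and Gan--Li--Xi. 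For Part (4), I would combine the classification of injectives from Part (2) with iteration: $V_t$ has finite injective dimension in $\D\module$ because finite-dimensional modules over the relevant truncated finite-dimensional $k$-algebra admit bounded injective resolutions in characteristic zero; $V_f$ embeds in an injective $I_0$ of the form $\bigoplus\res P(n_i)$ with finitely generated cokernel by Proposition \ref{noetherianity}, so iteration produces an injective resolution whose length is controlled by the finite injective dimension of $\ind V_f$ in $\C\module$ (known for $\C$), transferred back via the adjunction isomorphism $\Ext_{\D}^{\ast}(V_f,\res(-))\cong\Ext_{\C}^{\ast}(\ind V_f,-)$.

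The main obstacle, in my view, is controlling the finite-dimensional (torsion) contribution: one must verify both that $\D\fdmod$ has enough injectives \emph{inside} $\D\module$ (not merely inside $\D\fdmod$) and that these injectives have finite injective dimension in $\D\module$. Because $\D$ can be an arbitrary infinite full subcategory of $\C$, the injective hull of $V_t$ computed in $\C\fdmod$ need not be supported on $\Ob(\D)$ at all, so the finite-dimensional injective envelopes must either be constructed directly from the truncated finite-dimensional category algebra of $\D$, or extracted from the type (c) injectives in the classification of Part (2) combined with the torsion-theoretic characterization in Theorem \ref{characterize sheaves}.
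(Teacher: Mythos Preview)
Your approach to parts (1)--(3) is essentially the same as the paper's: the torsion sequence $0\to V_T\to V\to V_F\to 0$, the embedding of $V_F$ via Lemma~\ref{embedding}, and the chain of equivalences through $\sh(\D^{\op},J_a,\underline{k})$ and $\sh(\C^{\op},J_a,\underline{k})$ are exactly what the paper does. Two minor differences: you prove the identification $\mathcal{T}(J_a)\cap\D\module=\D\fdmod$ explicitly (the paper simply asserts it), and you derive (2) by a torsion/torsion-free dichotomy on the indecomposable injective itself rather than by first embedding via (1). Both variations are harmless. Your identification of the ``obstacle'' about finite-dimensional injective envelopes is a genuine subtlety that the paper also glosses over; your proposed resolution via truncation to $\D_{\leq m}$ and coinduction is correct.

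Your argument for (4), however, has a gap. The adjunction isomorphism $\Ext^*_{\D}(V_f,\res(-))\cong\Ext^*_{\C}(\ind V_f,-)$ requires $\ind$ to be exact, which you do not verify; more importantly, even granting this isomorphism, it has $V_f$ in the \emph{covariant} slot, so it bounds quantities like the projective dimension of $\ind V_f$, not the injective dimension of $V_f$. Knowing that $\ind V_f$ has finite injective dimension in $\C\module$ tells you about $\Ext^*_{\C}(-,\ind V_f)$, which is not what your adjunction computes. So the claimed control on the length of the $\D$-resolution does not follow.

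The paper's route for (4) avoids this entirely: it uses the equivalence from (3). Since $\sh(\D^{\op},J_a,\underline{k})\simeq\C\fdmod$ and every object of $\C\fdmod$ has a finite injective resolution in characteristic~0, the sheafification $V^{\sharp}$ admits a finite injective resolution in the sheaf category; viewed in $\D\module$, this is a bounded complex of injectives whose cohomology lies in $\D\fdmod$, and since finite-dimensional $\D$-modules have finite injective dimension (by the truncation argument you yourself outline), a standard d\'evissage gives finite injective dimension for $V^{\sharp}$ and hence for $V$. A closely related fix of your own argument would be to replace $\ind$ by $\coind$: since $V_f$ is a sheaf, $V_f\cong\res\coind V_f$ under Lemma~\ref{equivalence of atomic sheaves}, and a finite injective resolution of $\coind V_f$ in $\C\module$ restricts (via the exact, injective-preserving $\res$) to one for $V_f$.
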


\begin{proof}
(1). A finitely generated $\D$-module $V$ gives a short exact sequence
\[
0 \to V_T \to V \to V_F \to 0
\]
in $\D \Mod$, where $V_T$ is $J_a$-torsion and $V_F$ is $J_a$-torsion free. Since $V_T$ is finitely generated, it is finite dimensional. Hence, $V_T$ can be embedded into a finite dimensional injective $\D$-module. By Lemma \ref{embedding}, $V_F$ can be embedded into $\res(P)$ with $P$ a finitely generated projective $\C$-module. But $P$ is injective in $\Sh(\C^{\op}, \, J_a, \, \underline{k})$, so $\res(P)$ is injective in $\Sh(\D^{\op}, \, J_a, \, \underline{k})$, and hence injective in $\D \Mod$.

(2). Let $V$ be an indecomposable injective $\D$-module. By (1), it can be embedded into a direct sum of a finite dimensional injective $\D$-module and $\res(P)$, so it is either finite dimensional or a direct summand of $\res(P)$. In the later case, it must be a direct summand of some $\res(P(n))$. The conclusion then follows.

(3). By \cite[Corollary 4.9]{DLLX}, one has
\[
\sh(\C^{\op}, \, J_a, \, \underline{k}) \simeq \C \module / \C \fdmod \simeq \C \fdmod
\]
The finitely generated version of \cite[Corollary 3.9]{DLLX} gives another equivalence
\[
\sh(\D^{\op}, \, J_a, \, \underline{k}) \simeq \D \module / \D \fdmod.
\]
But it is clear that the equivalence in Lemma \ref{equivalence of atomic sheaves} restricts to an equivalence
\[
\sh(\C^{\op}, \, J_a, \, \underline{k}) \simeq \sh(\D^{\op}, \, J_a, \, \underline{k}).
\]
The conclusion then follows.

(4). We can use the same technique as that of the proof of \cite[Thorem 4.3.1]{SS}. Let $V$ be a finitely generated $\D$-module. Since $\Sh(\D^{\op}, \, J_a, \, \underline{k}) \simeq \D \module /\D \fdmod$, the kernel and cokernel of the natural map from $V$ to its sheafification $V^{\sharp}$ in $\Sh(\D^{\op}, \, J_a, \, \underline{k})$ are of finite dimensional. But every finite dimensional $\D$-module has finite injective dimension. Therefore, we only need to show that $V^{\sharp}$ has finite injective dimension in $\D \module$.

Since $\Sh(\D^{\op}, \, J_a, \, \underline{k}) \simeq \C \fdmod$, $V^{\sharp}$ has a finite injective resolution
\[
V^{\sharp} \to I^0 \to \ldots \to I^s \to 0
\]
in  $\Sh(\D^{\op}, \, J_a, \, \underline{k})$. This is a finite complex in $\D \module$ for which all terms except $V^{\sharp}$ are injective. Moreover, all cohomology groups are finite dimensional $\D$-modules. Consequently, $V^{\sharp}$ as a $\D$-module has finite injective dimension as desired.
\end{proof}

Relations among these categories are illustrated in the following commutative diagram
\[
\xymatrix{
& \D \module \ar[d]^-{\mathrm{loc}} \ar[r]^-{\coind} \ar[ld]_-{\nu} & \C \module \ar[r]^-{\nu} \ar[d]^-{\mathrm{loc}} & \C \fdmod\\
\D \fdmod & \D \module / \D \fdmod \ar[d]^-{\simeq} \ar[l]^-{\bar{\nu}} & \C \module / \C \fdmod \ar[ur]_-{\bar{\nu}}^-{\simeq} \ar[d]^-{\simeq}\\
& \sh(\D^{\op}, \, J_a, \, \underline{k}) \ar[r]^-{\coind}_{\simeq} & \sh(\C^{\op}, \, J_a, \, \underline{k}),
}
\]
where $\mathrm{loc}$ is the localization functor, and $\bar{\nu}$ is the functor induced by the \textit{Nakayama functor} $\nu$ (see \cite{GLX} for details).

\begin{remark}
It is well known that every finitely generated $J_a$-torsion free injective $\C$-module is projective. This is not the case in $\D \Mod$. Moreover, the functor
\[
\bar{\nu}: \D \module / \D \fdmod \longrightarrow \D \fdmod
\]
is not an equivalence. For example, let $\D$ be obtained via removing the object $\boldsymbol{0}$ from $\C$. Then the constant $\D$-module $\underline{k}$ is injective and $J_a$-torsion free, but not projective. Furthermore, it was sent to the zero module in $\D \fdmod$ via $\bar{\nu}$.
\end{remark}

\begin{remark}
Based on the work of G\"{u}nt\"{u}rk\"{u}n-Snowden \cite{GS} and \cite{DLLX, GL1}, one can use sheaf theory to deduce similar results for the category $\OI$ of finite linearly ordered sets and order-preserving injections. For instances, if $k$ is a field of characteristic 0, and $\D$ is an infinite full subcategory of a skeleton $\C$ of $\OI$, then one has
\[
\D \module / \D \tmod \simeq \C \fdmod,
\]
where $\D \tmod$ is the category of finitely generated $J_a$-torsion $\D$-modules. Note that in this case finite dimensional modules are $J_a$-torsion, but not all finitely generated $J_a$-torsion modules are finite dimensional.
\end{remark}

\subsection{Representations of groups}

In this subsection we consider applications of sheaf theory in group representation theory. A bridge connecting these two areas is the following construction.

Let $G$ be a group, and $\mathcal{H}$ a family of subgroups of $G$. The \textit{orbit category} $\Orb(G, \mathcal{H})$ of $G$ with respect to $\mathcal{H}$ is defined as follows: objects are left cosets $G/H$ with $H \in \mathcal{H}$, and morphisms from $G/H$ to $G/K$ are $G$-equivariant maps. Every such morphism is an epimorphism, and can be represented by an element $g \in G$ such that $gHg^{-1} \subseteq K$. In particular, for every object $G/H$, its endomorphism monoid is a group isomorphic to $N_G(H)/H$, so $\Orb(G, \mathcal{H})$ is an EI category. For more details, the reader can refer to \cite[III.9]{MM}. Moreover, it is easy to see that $\Orb(G, \mathcal{H})$ is noetherian (resp., artinian) if and only if the poset $\mathcal{H}$ with respect to inclusion is noetherian (resp., artinian).

Representations of $G$ are closely related to sheaves of modules on $\Orb(G, \mathcal{H})$ equipped with certain special Grothendieck topologies. Therefore, one may apply sheaf theory to study representations of groups. We give a few examples appearing in the literature to illustrate this approach.

\begin{example}
Let $G$ be a topological group, and let $\mathcal{H}$ be a cofinal system of open subgroups. Impose the atomic Grothendieck topology $J_a$ on $\Orb(G, \mathcal{H})$, and let $\underline{k}$ be a constant structure sheaf given by a commutative ring $k$. Artin's theorem \cite[III.9 Theorem 1]{MM} asserts that
\[
\Sh(\Orb(G, \, \mathcal{H}), \, J_a, \underline{k}) \simeq kG \Mod^{\mathrm{dis}},
\]
the category of \textit{discrete representations} of $G$, which are $k$-modules equipped the discrete topology such that $G$ acts on them continuously. In \cite{DLLX} the authors use this equivalence to study discrete representations of the infinite symmetric group $\varinjlim_n S_n$ and the infinite general linear group $\varinjlim_n \mathrm{GL}_n (\mathbb{F}_q)$ over a finite field. In particular, they obtain a classification of irreducible discrete representations when $k$ is a field of characteristic 0.
\end{example}

The following example is introduced by Balmer in \cite{Bal} and further studied by Xiong, Xu, and Zheng in \cite{XX, XZ}.

\begin{example}
Let $G$ be a finite group, $\mathcal{H}$ the set of all subgroups of $G$, $p$ a prime number and $k$ a field of characteristic $p$. Following \cite[Definition 5.8]{Bal}, we call a family of morphisms $\{G/H_i \to G/H \}$ a \textit{sipp-covering} if there exists a certain $H_i$ such that the index $[H: H_i]$ is prime to $p$. By \cite[Theorem 5.11]{Bal}, these sipp-coverings form a basis of the \textit{sipp} topology $J$ on $\Orb(G, \mathcal{H})$.

Since $\Orb(G, \mathcal{H})$ is a finite EI category, $J$ shall be rigid. Actually, an object $G/H$ is $J$-irreducible if and only if $H$ is a $p$-group. Indeed, if $H$ is a $p$-group (including the trivial group) and $\{G/H_i \to G/H \}$ is a sipp-covering, then every $H_i$ is a $p$-group (otherwise, there is no morphism from $G/H_i \to G/H$), and $[H: H_i]$ is prime to $p$ for a certain $H_i$ if and only if $H$ is conjugate to this $H_i$. But this means that $G/H$ is isomorphic to the corresponded $G/H_i$. Since $\Orb(G, \mathcal{H})$ is an EI category, it follows that all morphisms from this $G/H_i$ to $G/H$ are isomorphisms, so $G/H$ is a $J$-irreducible object.

On the other hand, suppose that $H$ is not a $p$-group and let $H_p$ be its Sylow $p$-subgroup. Then any morphism $G/H_p \to G/H$ forms a sipp-covering. It generates a sieve contained in $J(G/H)$ which is not the maximal sieve. Consequently, by letting $\mathscr{D}$ be the full subcategory of $\Orb(G, \mathcal{H})$ consisting of objects $G/H$ with $H$ a $p$-subgroup, one obtains the following equivalence
\[
\Sh(\Orb(G, \mathcal{H}), \, J, \, \underline{k}) \simeq k\D^{\op} \Mod,
\]
a result described in \cite[Subsection 5.3]{XX}.
\end{example}

A further generalization of the above example is:

\begin{example}
Let $G$ be an artinian group (for instance, the Pr\"{u}fer $p$-group), $\mathcal{H}$ a set of subgroups of $G$, and $k$ a commutative ring. The orbit category $\Orb(G, \mathcal{H})$ is an artinian EI category, so by Theorem \ref{rigid topologies} every Grothendieck topology $J$ on it is rigid. By Theorem \ref{classify sheaves}, one has
\[
\Sh(\Orb(G, \mathcal{H}), \, J, \, \underline{k}) \simeq k\D^{\op} \Mod,
\]
where $\D$ is the full subcategory of $\Orb(G, \mathcal{H})$ consisting of $J$-irreducible objects.
\end{example}

There already exist quite a few reformulations of Alperin's weight conjecture. In particular, Webb gives in \cite{Webb} a reformulation in terms of the standard stratification property of the category algebra of the orbit category. The previous example gives a sheaf theoretic reformulation. We end this paper with an illustration of this observation.

Let $G$ be a finite group, $p$ a prime number, and $k$ an algebraically closed field of characteristic $p$. A \textit{weight} is a pair $(H, L)$ with $H$ a $p$-subgroup (possibly trivial) of $G$ and $L$ is an irreducible projective $kN_G(H)/H$-module. Two weights $(H_1, L_1)$ and $(H_2, L_2)$ are equivalent if there exists $g \in G$ such that the conjugate $H_1^g$ coincides with $H_2$ and $L_1^g \cong L_2$. Alperin's weight conjecture asserts that the number of isomorphism classes of irreducible $kG$-modules is equal to the number of equivalence classes of weights.

Let $\mathcal{H}$ be the family of $p$-subgroups of $G$ and equip $\Orb(G, \mathcal{H})$ with the atomic Grothendieck topology $J_a$. Since only the object $G/1$ is $J_a$-irreducible, and $N_G(1)/1 \cong G \cong G^{\op}$, one obtains
\[
\Sh(\Orb(G, \mathcal{H}), \, J, \, \underline{k}) \simeq kG \Mod.
\]
Therefore, Alperin's weight conjecture can be reformulated as follows: the number of isomorphism classes of irreducible sheaves is equal to the number of equivalence classes of weights.

Since the automorphism group of $G/H$ is $N_G(H)/H$, a weight of $G$ can be viewed as a ``local" representation of $\Orb(G, \mathcal{H})^{\op}$; that is, a representation supported only on objects isomorphic to $G/H$. Instead, irreducible objects in $\Sh(\Orb(G, \mathcal{H}), \, J, \, \underline{k})$ can be viewed as ``global" representations of $\Orb(G, \mathcal{H})^{\op}$. Thus this reformulation reflects the ``local-global" principle of Alperin's weight conjecture. Furthermore, one may form a stronger version of this conjecture as follows: construct a bidirectional machinery such that from each weight one can obtain a unique irreducible sheaf up to isomorphism, and by a converse procedure one recovers this weight from the isomorphism class of the irreducible sheaf.

Let $\C = \Orb(G, \mathcal{H})^{\op}$ and $k\C$ the category algebra of $\C$. Note that ``local" representations and ``global" representations of $\C$ are related by the restriction and induction functors. Thus one may ask the following questions:
\begin{enumerate}
\item For each weight $(H, L)$, can we find a unique $J$-saturated irreducible $\C$-module $V(H, L)$ which is a composition factor of
\[
k\C \otimes_{k(N_G(H)/H)} L
\]
such that $V(H, L) \cong V(H', L')$ if and only if $(H, L)$ and $(H', L')$ are equivalent?

\item If the answer of the above question is yes, can every irreducible $J_a$-saturated $\C$-module  be obtained via the above construction?
\end{enumerate}

If one can answer these two questions affirmatively, then we can get a bijective correspondence between the number of equivalence classes of weights and the set of isomorphism classes of irreducible $J_a$-saturated $k\C$-modules. A proof of  Alperin's wight conjecture then follows from this bijection.

\end{document}